\documentclass
{article}
\usepackage[utf8]{inputenc}

\usepackage{amsmath}
\usepackage{amsfonts}
\usepackage{amssymb}

\usepackage{amsmath, amsthm, amssymb, amsfonts}
\usepackage{graphicx, color} 
\usepackage{fancyhdr} 
\usepackage{caption, subcaption} 
\usepackage{enumerate} 
\usepackage{mathrsfs} 
\usepackage{makeidx} 
\usepackage[section]{placeins} 
\usepackage[all]{xy} 
\usepackage{appendix}
\usepackage{verbatim}
\usepackage{moresize}

\usepackage[mathlines, pagewise]{lineno}
\usepackage
{hyperref}
\usepackage{enumitem}
\hypersetup{
  colorlinks  = true,    
  urlcolor    = red,    
  linkcolor   = blue,    
  citecolor   = blue,      
  pdfauthor   = {Hector Alonzo Barriga Acosta},%
  pdfsubject  = {Protocolo},%
  pdftitle    = {Countable Nabla Products},  %
}

\usepackage[capitalise, noabbrev,nameinlink]{cleveref}



\newtheorem{prop}{Proposition}
\newtheorem{thr}[prop]{Theorem}
\newtheorem{coro}[prop]{Corollary}
\newtheorem{lemma}[prop]{Lemma}
\newtheorem{de}[prop]{Definition}
\newtheorem{exa}[prop]{Example}

\newtheorem{ques}[prop]{Question}

\newtheorem{prob}[prop]{Problem}

\newcommand{\om}{\omega}
\newcommand{\baire}{\omega^\omega}

\newcommand{\sub}{\subseteq}

\newcommand{\nablaomega}{\nabla (\om +1)^\om}
\newcommand{\boxomega}{\square (\omega +1)^\omega}

\newcommand{\topl}{\overline{c}_\om}

\newcommand{\propp}{\mathcal{P}}

\newcommand{\rest}{\upharpoonright}

\newcommand{\xbar}{\overline{x}}
\newcommand{\ybar}{\overline{y}}

\newcommand{\zbar}{\overline{z}}

\newcommand{\down}[1]{\downarrow{#1}}

\newcommand{\ordnabla}[1]{\nabla ({#1} +1)^\om}
\newcommand*\rfrac[2]{{}^{#1}\!/_{#2}} 

\usepackage{cite}

\title{Monotone Normality and Nabla-Products}
\author{H.A. Barriga-Acosta and P.M. Gartside 
}
\date{}
\begin{document}
\maketitle

\begin{abstract}
Roitman's combinatorial principle $\Delta$ is equivalent to  monotone normality of the nabla product, $\nabla (\om +1)^\om$. If $\{ X_n : n\in \om\}$ is a family of metrizable spaces and $\nabla_n X_n$ is monotonically normal, then $\nabla_n X_n$ is hereditarily paracompact. 
Hence, if $\Delta$ holds then the box product $\square  (\om +1)^\om$ is paracompact.
Large fragments of $\Delta$ hold in $\mathsf{ZFC}$, yielding large subspaces of $\nabla (\om+1)^\om$ that are `really' monotonically normal.
Countable nabla products of metrizable spaces which are respectively: arbitrary,  of size $\le \mathfrak{c}$, or separable,  are monotonically normal under respectively: $\mathfrak{b}=\mathfrak{d}$, $\mathfrak{d}=\mathfrak{c}$ or the Model Hypothesis.

It is consistent and independent that $\nabla A(\omega_1)^\omega$ and $\nabla (\om_1+1)^\om$
are  hereditarily normal (or hereditarily paracompact, or monotonically normal). 
In $\mathsf{ZFC}$ neither $\nabla A(\omega_2)^\omega$ nor $\nabla (\om_2+1)^\om$ is hereditarily normal.
\end{abstract}

\section{Introduction}
Let $\{ X_i : i\in I  \}$ be a family of topological spaces. (All spaces in this article are Tychonoff.) A \emph{box} is a set $\prod_i U_i$, where each $U_i$ is open in $X_i$. The \emph{box product}, $\square_i X_i$, is the space with underlying set $\prod_i X_i$ and basis all boxes. Two elements $x$ and $y$ of $\square_i X_i$ are \emph{mod-finite equivalent}, denoted $x \sim y$, if the set $\{i \in I : x(i) \ne y(i)\}$ is finite. The \emph{nabla product}, $\nabla_i X_i$, is the quotient space, $\square_i X_i/\sim$.

It is unknown, in $\mathsf{ZFC}$,  whether the countable box product $\square [0,1]^\om$, or even its closed subspace, $\square (\omega+1)^\om$, is normal. This question was asked (orally) for the first time by Tietze sometime in the 1940's. See \cite{roitman2015paracompactness} for a survey of the box product problem. 
Central to almost all positive results on paracompactness, and hence normality, of box products, is a connection to the nabla product due to Kunen \cite{kunen1978paracompactness}: let $\{ X_n : n\in \om \}$ be a family of compact spaces, then, $\square_n X_n$ is paracompact if and only if $\nabla_n X_n$ is paracompact. 
In particular, it is now known that under certain set theoretic assumptions  the nabla product $\nabla (\om +1)^\om$ is paracompact and so the box product $\square (\om +1)^\om$ is paracompact.
These assumptions include the small cardinal conditions, $\mathfrak{b} =\mathfrak{d}$ \cite{van1980covering}, and $\mathfrak{d}= \mathfrak{c}$ \cite{roitman1979more, lawrence1988box}, 
and also the so called Model Hypothesis \cite{roitman2011paracompactness}, which holds in any forcing extension by uncountably many Cohen reals.  

In an insightful analysis of the combinatorics behind these consistency results,   Roitman \cite{roitman2011paracompactness} extracted  a combinatorial principle,  $\Delta$. She showed $\Delta$    is a consequence of each of the set theoretic axioms mentioned above, and further claimed that $\Delta$ implies the paracompactnes of $\nabla (\om +1)^\om$.
Unfortunately not all the details for the latter deduction were presented, and the authors and Roitman \cite{R_personal} are unclear how to fill the gap. See \cref{Delta} for the definition of $\Delta$, additional notation and more details on the gap.

In \cref{S:Delta_and_Nabla}  we close the gap by connecting $\Delta$ to \emph{monotone} normality of $\nabla$ products.
Indeed, (\cref{MN}) the property $\Delta$ holds if and only if $\nabla (\om +1)^\om$ is monotonically normal.
Monotonically normal spaces are not automatically paracompact, but (\cref{ThrNablaMNimpliesHP}) we show: if $\{ X_n : n\in \om \}$ is a family of metrizable spaces and $\nabla_n X_n$ is monotonically normal, then it is hereditarily paracompact. 
It follows that if $\Delta$ holds, then $\nabla (\om +1)^\om$ is monotonically normal, and so hereditarily paracompact, and hence $\square  (\om+1)^\om$ is paracompact, as Roitman originally claimed.

Recall that a space $X$  is \emph{monotonically normal} if for every pair of disjoint closed sets $A, B$ there is an open set $H(A,B)$ such that (i) $A \subseteq H(A,B) \subseteq \overline{H(A,B)} \subseteq X \setminus B$ (so $H(\cdot,\cdot)$ separates $A$ from $B$, and thus  witnesses normality) and (ii) if $A' \subseteq A$ and $B \subseteq B'$ then $H(A',B') \subseteq H(A,B)$ (`monotonicity', the separation respects set inclusion). 
An alternative characterization is that for every point $x$ in an open set $U$ there is assigned an open set $G(x,U)$  such that $x\in G(x, U) \subseteq U$, and if $G(x, U) \cap G(y, V) \neq \emptyset$, then $x \in V$ or $y \in U$. 
Observe that the restriction of a monotone normality operator, $G(\cdot,\cdot)$, to a subspace yields a monotone normality operator for the subspace, and so monotone normality is hereditary. 
It follows that monotone normality does not transfer from $\nabla (\om +1)^\om$ to $\square (\om +1)^\om$.
Indeed, see \cite{williams1984box}\label{bp not hn}, 
if $\{ X_i : i\in I \}$ is a family of compact or first countable spaces, then $\square_i X_i$ is not hereditarily normal.

The authors do not know how to prove $\Delta$ in $\mathsf{ZFC}$, or to prove that its negation is consistent. 
In an effort to shed light on this conundrum we have attempted to `parametrize' the problem: either `from below' in order to see how close we can get to establishing $\Delta$ in $\mathsf{ZFC}$, or `from above' to determine when natural strengthenings of $\Delta$ are false either consistently or in $\mathsf{ZFC}$.

For example, pursuing an idea of Roitman, we characterize in \cref{MNNabla} when certain subspaces $A$ of $\nabla (\om+1)^\om$ are monotonically normal in terms of a combinatorial property $\Delta (A)$, where $\Delta$ is $\Delta(\nabla (\om+1)^\om)$. In particular, see \cref{P:DeltaFI},  $\Delta (A)$ is true in $\mathsf{ZFC}$ for $A$ consisting of all finite disjoint unions of increasing functions.

In the other direction we have found combinatorial characterizations of when nabla products of certain spaces containing $\om+1$ as a closed subspace, or otherwise naturally extending $\om+1$, are monotonically normal. 
Specifically, observe that $\omega+1$ is the one-point compactification of a countably infinite discrete space. 
Accordingly in \cref{nabla_of_fort}, we characterize combinatorially monotone normality of nabla products of spaces of the form, $A(\kappa)$, the one point compactification of a discrete space of size $\kappa$.
We  show that $\nabla A(\om_2)^\om$ is not hereditarily normal, and so not monotonically normal, in $\mathsf{ZFC}$; while $\nabla A(\om_1)^\om$ is consistently not hereditarily normal. A striking  result of Williams \cite{williams1984box}, is that consistently any countable nabla product of compact spaces of weight (minimal size of a base) no more than $\aleph_1$ is ($\om_1$-metrizable and so) monotonically normal and hereditarily paracompact. 
In particular, $\nabla A(\om_1)^\om$ is consistently monotonically normal and hereditarily paracompact; and so each of the statements: `$\nabla A(\om_1)^\om$ is monotonically normal', `$\nabla A(\om_1)^\om$ is hereditarily paracompact' and `$\nabla A(\om_1)^\om$ is hereditarily  normal' is consistent and independent. These results answer questions of Roitman. 

Observing that $\omega+1$ can also be viewed as an ordinal with the order topology, in \cref{nabla_of_ordinals}, we go on to characterize combinatorially monotone normality of nabla products of  ordinals. 
This yields a finer parametrization than looking at one point compactifications. 
Indeed  if $\nabla (\om+1)^\om$ is monotonically normal (in other words, $\Delta$ holds) then for every $n$ in $\om$, we also have $\nabla (\om.n+1)^\om$ monotonically normal. 
However,  the combinatorial principle characterizing when $\nabla (\om.\om+1)^\om$ is monotonically normal is -- at least \emph{formally} -- stronger than $\Delta$. 
 By the result of Williams \cite{williams1984box} mentioned above, $\nabla (\om_1+1)^\om$ is consistently monotonically normal and hereditarily paracompact. 
 We show $\nabla (\om_1+1)^\om$ is consistently not hereditarily normal.
 Thus each of the statements: `$\nabla (\om_1+1)^\om$ is monotonically normal', `$\nabla (\om_1+1)^\om$ is hereditarily paracompact' and `$\nabla (\om_1+1)^\om$ is hereditarily  normal' is consistent and independent.  
In contrast we also show that $\nabla (\om_2+1)^\om$ is not hereditarily normal, and so not monotonically normal, in $\mathsf{ZFC}$. 
These results answer questions of Roitman.

While in \cref{nabla_of_metrizable} -- thinking of $\omega+1$ as the simplest non-discrete metrizable space -- we investigate combinatorial characterizations of monotone normality of nabla products of metrizable spaces. 
We conclude the paper with some open problems and potential lines of research.


\section{Preliminaries}

\subsection{Set Theory} 

Recall that $\mathfrak{b}$ is the minimal size of an unbounded set in $\om^\om$ with the mod-finite order, $\leq^*$, and $\mathfrak{d}$ the minimal size of a  cofinal (dominating) set. 
Further, $\mathfrak{b}=\mathfrak{d}$ if and only if there is a dominating family $\{f_\alpha : \alpha < \mathfrak{b}\} \sub \baire$ so that if $\alpha < \beta$ then $f_\alpha \leq^* f_\beta$ (such a family is called a \emph{scale}). We record an additional useful fact, a proof of which can be found in \cite{roitman2011paracompactness}.

\begin{lemma}\label{LemmaDiagonilizingFunctions}
If $\mathcal{G}\subseteq \baire, \mathcal{A} \subseteq \mathcal{P}(\omega)$ and 
$|\mathcal{G}|,|\mathcal{A}|< \mathfrak{d}$, then there is a function $f\in \baire$ so that for any $g\in \mathcal{G}$ and $a\in \mathcal{A}$, $|\{ n\in a : f(n) > g(n) \}|= \omega$. 
\end{lemma}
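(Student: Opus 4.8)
The plan is to reduce the statement to the single fact that a subfamily of $\baire$ of size less than $\mathfrak{d}$ cannot be dominating, and then to transfer the resulting ``escaping'' function back along each $a$ by a monotonicity trick. First I would observe that the conclusion forces each $a \in \mathcal{A}$ to be infinite, so I may assume this throughout. For each such $a$ let $e_a \colon \om \to a$ be its increasing enumeration, and for each $g \in \mathcal{G}$ set $g_a = g \circ e_a \in \baire$, namely $g$ \emph{read along} $a$. The point of this normalization is that requiring $f(n) > g(n)$ for infinitely many $n \in a$ is the same as requiring $(f \circ e_a)(k) > g_a(k)$ for infinitely many $k$.

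Next I would bound the cardinality of the transferred family $\{ g_a : g \in \mathcal{G},\ a \in \mathcal{A}\}$. Since $|\mathcal{G}|, |\mathcal{A}| < \mathfrak{d}$ and $\mathfrak{d}$ is an infinite cardinal, the product $|\mathcal{G}| \cdot |\mathcal{A}|$ is again below $\mathfrak{d}$, so this family has size $< \mathfrak{d}$ and is therefore not dominating in $(\baire, \leq^*)$. Hence there is a single $h \in \baire$ with $h \not\leq^* g_a$ for every $g$ and $a$; that is, $h(k) > g(e_a(k))$ for infinitely many $k$, simultaneously over all pairs $(g,a)$. Replacing $h$ by $k \mapsto \max_{j \le k} h(j) + k$, I may take $h$ strictly increasing while preserving this escaping property.

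Finally I would set $f = h$ and verify the conclusion. The crucial observation, and the one place the argument is not purely formal, is that the increasing enumeration satisfies $e_a(k) \ge k$, so if $n = e_a(k)$ then monotonicity of $h$ gives $f(n) = h(n) \ge h(k)$. Thus for each of the infinitely many $k$ witnessing $h(k) > g(e_a(k))$ we obtain $f(n) = h(n) \ge h(k) > g(n)$ with $n = e_a(k) \in a$, so $\{ n \in a : f(n) > g(n)\}$ is infinite, as required.

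I expect the main obstacle to be exactly this transfer step: the escaping function $h$ naturally lives on the \emph{index} coordinate $k$, whereas the desired $f$ must win on the \emph{actual} coordinate $n = e_a(k)$, and these indices differ from set to set. The monotonicity trick, powered by $e_a(k) \ge k$, is what allows one fixed $f$ to serve all sets $a$ at once; without it one might be tempted to construct $f$ by a transfinite recursion, which is unnecessary here once the function family has been read along the sets of $\mathcal{A}$.
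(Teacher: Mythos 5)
Your proof is correct and complete: reading each $g$ along $a$ via the increasing enumeration $e_a$, using that fewer than $\mathfrak{d}$ functions cannot dominate (with $|\mathcal{G}|\cdot|\mathcal{A}|<\mathfrak{d}$), and transferring back through a monotone $h$ via $e_a(k)\ge k$ is exactly the right device, and the step replacing $h$ by a strictly increasing majorant preserves the escaping property as you say. The paper itself gives no proof, deferring to Roitman \cite{roitman2011paracompactness}, whose argument is essentially this same reduction, so your proposal matches the intended proof.
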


\begin{de}[Roitman \cite{roitman2011paracompactness}]\label{MH}
The \emph{Model Hypothesis}, abbreviated $\mathsf{MH}$, is the following statement: For some $\kappa$, $H(\om_1)$ is the increasing union of $H_\alpha$'s, for $\alpha < \kappa$, where each $H_\alpha$ is an elementary submodel of $(H(\om_1), \in )$ and each $H_\alpha \cap \baire$ is not dominating.
\end{de}

Here $H(\kappa)$ is the collection of all sets whose transitive closures have size less than $\kappa$. In particular, both $\baire, \propp (\om)$ are contained in $H(\om_1)$, and a space of countable weight can be coded as (hence is homeomorphic to) a subset of $H(\om_1)$.

\subsection{Box and Nabla Products}

We will follow Roitman's notation from  \cite{roitman2011paracompactness}. For $x\in \square_n X_n$,  we  write $\xbar$ for its mod-finite equivalence class, $[x]_\sim$, in $\nabla_n X_n$. 
If $x\in \square_n X_n$ or $\xbar \in \nabla_n X_n$, and $U = \langle U_n \rangle_{n\in \om}$ is a sequence of open sets, where $x(n)\in U_n \sub X_n$, define the basic neighborhood around $x$ 
by $N(x, U) := \square_n U_n \sub \square_n X_n$ and $N(\xbar, U) := \nabla_n U_n \sub \nabla_n X_n$. 
If the $X_n$'s are first countable, a basis of $x$ or $\xbar$ is coded by $\baire$ as follows: if $\{ U^k_n : k\in \om \}$ is a base at $x(n)$, we will write $N(x, f)$ for $\square_n U^{f(n)}_n$ and $N(\xbar, f)$ for $\nabla_n U^{f(n)}_n$, where $f\in \baire$. Following Roitman, we do not distinguish between elements of $\square_n X_n$ and $\nabla_n X_n$ ($x$ \textsl{versus} $\xbar$) if there is no chance of confusion.

A space $X$ is said to be $P_\kappa$ if the intersection of strictly fewer than $\kappa$-many open sets is open. 
We recall: every nabla product, $\nabla_n X_n$, is a $P_{\om_1}$-space; and if each $X_n$ is first countable, then $\nabla_n X_n$ is a $P_{\mathfrak{b}}$-space.


\subsection{Monotone Normality and Halvablility} 

Let $A$ be a subspace of a space $X$.
We say that $A$ is \emph{monotonically normal in $X$} if for every point $x$ of $A$ and set $U$ open in $X$ containing it, there is assigned an open (in $X$) set $G(x,U)$  such that $x\in G(x, U) \subseteq U$, and if $G(x, U) \cap G(y, V) \neq \emptyset$, then $x \in V$ or $y \in U$. 
Observe that for any base for $X$  we only need define $G(x,U)$ for basic $U$, and we may assume that $G(x,U)$ is in the base. This will be used frequently in the sequel. 
If $A$ is monotonically normal in some $X$ then clearly $A$ is monotonically normal.

A function $F$ on $A$ is a \emph{neighborhood assignment} (or, \emph{neighbornet}) for $A$ (in $X$) if $F(x)$ is a neighborhood of $x$, for every $x$ in $A$. (A neighbornet for the whole space is just called a `neighbornet'.) 
A neighbornet $T$ on $A$  is \emph{halvable in $X$} if there is a neighbornet $S$ for $A$ such that: if $S(x) \cap S(y) \ne \emptyset$ then $x \in T(y)$ or $y \in T(x)$. Note that we may assume that every $S(x)$ comes from any given base for $X$. The subspace $A$ of   $X$ is \emph{halvable in $X$} if every neighbornet of $A$ in $X$ is halvable.  A space is \emph{halvable} if it is halvable in itself (every neighbornet can be halved).

Observe that if $A$ is monotonically normal in $X$ then it is halvable in $X$, and so every monotonically normal space is halvable. 
The converse is false. For example, every countable (Tychonoff) space is halvable \cite{KPHart}, but there are countable spaces that are not monotonically normal (for example, all polynomials with rational coefficients with the topology of pointwise convergence, see \cite{pg}). However, it turns out that in certain cases nabla products are monotonically normal if they are halvable, indeed it suffices that just one specific neighbornet be halvable.

\begin{lemma}\label{LemmaMNequivalentToHalvable}
Let $X$ be a space with partial order $\preceq$ and neighborhood bases, $\mathcal{B}_x$, for each $x\in X$, such that: (a) $\downarrow x=\{ y : y \preceq x\}$ is a neighborhood of $x$ for all $x$, and (b) if $y \in B \subseteq \; \downarrow x$, where $B \in \mathcal{B}_x$, then the interval $[y,x]=\{ z : y \preceq z \preceq x\}$ is contained in $B$. Let $A$ be a subspace of $X$. Then, $A$ is monotonically normal in $X$ if and only if the neighbornet $T_A(x) = \; \downarrow x$, for $x$ in $A$, is halvable in $X$.
\end{lemma}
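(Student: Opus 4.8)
The plan is to dispose of the forward implication at once and to put all the work into the converse, where the order structure is exploited. For the forward direction: if $A$ is monotonically normal in $X$, then by the observation recorded just before the lemma it is halvable in $X$, so \emph{every} neighbornet of $A$ in $X$ is halvable; since $T_A$ is a genuine neighbornet for $A$ in $X$ (each $T_A(x) = \; \downarrow x$ is a neighborhood of $x$ by hypothesis (a)), it is in particular halvable, and we are done.

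For the converse I would fix a neighbornet $S$ of $A$ witnessing that $T_A$ is halvable, so that $S(x) \cap S(y) \neq \emptyset$ forces $x \in \; \downarrow y$ or $y \in \; \downarrow x$, i.e. $x$ and $y$ are $\preceq$-comparable. Using the remark that $S$ may be drawn from the base, I assume $S(x) \in \mathcal{B}_x$ for each $x$. I then build a monotone normality operator $G(\cdot,\cdot)$ for $A$ in $X$ as follows: given $x \in A$ and an open $U \ni x$, hypothesis (a) makes $U \cap \; \downarrow x$ a neighborhood of $x$, so I may choose a basic set $B(x,U) \in \mathcal{B}_x$ with $x \in B(x,U) \subseteq U \cap \; \downarrow x$, and define $G(x,U) = S(x) \cap B(x,U)$. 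This is open, contains $x$, and lies in $U$, as required.

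The crux is verifying the coherence condition. Suppose $G(x,U) \cap G(y,V) \neq \emptyset$, witnessed by a point $z$. Since $G(x,U) \subseteq S(x)$ and $G(y,V) \subseteq S(y)$, the sets $S(x)$ and $S(y)$ meet, so the halving property makes $x$ and $y$ comparable. In the case $y \preceq x$, the membership $z \in B(y,V) \subseteq \; \downarrow y$ gives $z \preceq y$, whence $z \preceq y \preceq x$ and so $y \in [z,x]$. On the other hand $z \in B(x,U)$ with $B(x,U) \in \mathcal{B}_x$ and $B(x,U) \subseteq \; \downarrow x$, so hypothesis (b) yields $[z,x] \subseteq B(x,U)$; therefore $y \in B(x,U) \subseteq U$. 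The case $x \preceq y$ is symmetric, interchanging the roles of $(x,U)$ and $(y,V)$ to conclude $x \in V$.

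I expect this final step to be the one requiring genuine care, and it is where all three hypotheses are used together: halving by itself delivers only the \emph{comparability} of $x$ and $y$, and it is precisely property (b)—applied to the basic set $B(x,U) \subseteq \; \downarrow x$ containing the witness $z$—that upgrades comparability into the membership $y \in U$ (respectively $x \in V$) demanded by monotone normality. Property (a) plays the supporting but essential role of guaranteeing that $U \cap \; \downarrow x$ is a neighborhood, so that a suitable $B(x,U)$ exists. Once this step is in place the construction of $G$ is complete.
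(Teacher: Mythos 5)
Your proof is correct and follows essentially the same route as the paper: the forward direction is the observation that monotone normality in $X$ implies halvability in $X$, and for the converse you define $G(x,U) = S(x) \cap B(x,U)$ with basic $B(x,U) \subseteq U \cap \downarrow x$ and use halving to get $\preceq$-comparability, then hypothesis (b) to place $y$ in $[z,x] \subseteq B(x,U) \subseteq U$ --- exactly the paper's argument, with the reduction to basic open sets made explicit rather than cited. No gaps.
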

\begin{proof}
We only need to show that if $S$ halves $T_A(x) =  \; \downarrow x$, then $A$ is monotonically normal in $X$. For any element $x$ of $A$ in some $B \in \mathcal{B}_x$, where $B \subseteq \; \downarrow x$, define $G(x,B) = S(x) \cap B$. We prove that this is a monotone normality operator for $A$ in $X$. Suppose $z \in G(x,B) \cap G(x',B')$. Then $S(x)$ meets $S(x')$, and suppose without loss of generality that $x' \in \; \downarrow x$, that is, $x' \preceq x$. As $z \in B \sub \; \downarrow x$, we have $z \preceq x$. Hence, $[z,x]$ is contained in $B$. But as $z \in B' \sub \; \downarrow x'$, we have $z \preceq x'$. Hence $x'$ is in $[z,x]$, and so in $B$.  
\end{proof}

A space $X$ is \emph{$\kappa$-metrizable} (for a cardinal $\kappa \ge \omega_1$) if it has an open base $\mathcal{B} = \{ U_{x,\alpha} : \alpha < \kappa, \; x\in X \}$ so that $\{ U_{x,\alpha} : \alpha < \kappa \}$ is a neighborhood base at $x$, 
and given two points $x, y$ and two ordinals $\alpha \leq \beta < \kappa $ then (i) if $y\in U_{x, \alpha}$ then $U_{y, \beta} \sub U_{x, \alpha}$; 
and (ii) if $y \notin U_{x, \alpha}$ then $U_{y, \beta} \cap U_{x, \alpha} = \emptyset$. Every $\kappa$-metrizable space is paracompact and monotonically normal.


\subsection{Not Hereditarily Normal}
We observe that certain spaces are not hereditarily normal. These will be used as test spaces to show certain nabla products are not hereditarily normal. The results are probably folklore, so we sketch just enough of their proofs for the full argument to be reconstructed by the reader.

If $\lambda, \kappa$ are cardinals, denote by $D(\kappa)$ the discrete space of size $\kappa$ and let $L_\lambda(\kappa)$ be the space with underlying set $D(\kappa) \cup \{ \kappa \}$, and topology where points of $D(\kappa)$ are isolated and neighborhoods around $\kappa$ have the form $\{ \kappa \} \cup (D(\kappa) \setminus C)$ for $C\sub D(\kappa)$ of size less $< \lambda$. Write $A(\kappa)$ for $L_{\om}(\kappa)$, the one-point compactification of $D(\kappa)$, and $L(\kappa) = L_{\om_1}(\kappa)$ the one-point Lindelofication of $D(\kappa)$.

\begin{lemma}\label{L(omega2)notHereditarilyNorlmal}
$L(\om_2)^2$ is not hereditarily normal.
\end{lemma}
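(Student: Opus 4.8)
The plan is to exhibit a concrete non-normal subspace of $L(\om_2)^2$ via a Jones-lemma-style separation argument. Write $\infty$ for the unique non-isolated point of $L(\om_2)$ (the point named $\om_2$ in the definition of $L_{\om_1}(\om_2)$), so that a neighborhood of $\infty$ is any set of the form $\{\infty\}\cup(D(\om_2)\setminus C)$ with $C$ countable, while every other point is isolated. I would delete the corner point and work in $Y = L(\om_2)^2 \setminus \{(\infty,\infty)\}$. Inside $Y$ consider the two sets $A = \{(\alpha,\infty) : \alpha \in D(\om_2)\}$ and $B = \{(\infty,\beta) : \beta \in D(\om_2)\}$. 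First I would verify that $A$ and $B$ are disjoint closed subsets of $Y$; this is routine, since a basic neighborhood of a point whose first coordinate is an isolated $\gamma$ has the form $\{\gamma\}\times W$ and so meets $A$ only in $(\gamma,\infty)$, while a basic neighborhood of a point $(\infty,\delta)$ with $\delta$ isolated has the form $W'\times\{\delta\}$ and misses $A$ entirely. The only candidate for a common limit point of $A$ and $B$ is the deleted corner $(\infty,\infty)$.

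Next I would suppose, toward a contradiction, that $U \supseteq A$ and $V \supseteq B$ are disjoint open sets in $Y$. For each isolated $\alpha$, since $(\alpha,\infty) \in U$, choose a basic box $\{\alpha\}\times(\{\infty\}\cup(D(\om_2)\setminus C_\alpha)) \subseteq U$ with $C_\alpha$ countable; symmetrically, for each isolated $\beta$ choose a box $(\{\infty\}\cup(D(\om_2)\setminus C'_\beta))\times\{\beta\} \subseteq V$ with $C'_\beta$ countable. The key observation is that, for isolated $\alpha,\beta$, one has $(\alpha,\beta) \in U$ whenever $\beta \notin C_\alpha$, and $(\alpha,\beta) \in V$ whenever $\alpha \notin C'_\beta$. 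Thus to contradict $U \cap V = \emptyset$ it suffices to find isolated $\alpha,\beta$ with $\beta \notin C_\alpha$ and $\alpha \notin C'_\beta$.

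This reduction to a purely combinatorial pair-finding statement is where the cardinal arithmetic enters, and I expect it to be the crux of the argument. If no such pair existed, then for every $\alpha$ the cocountable set $\{\beta : \beta \notin C_\alpha\}$ would be contained in $\{\beta : \alpha \in C'_\beta\}$, forcing $\{\beta : \alpha \notin C'_\beta\} \subseteq C_\alpha$ to be countable for every $\alpha$. But $\bigcup_{\beta<\om_1} C'_\beta$ has size at most $\om_1 < \om_2$, so I may pick $\alpha^* \in \om_2 \setminus \bigcup_{\beta<\om_1} C'_\beta$; then $\alpha^* \notin C'_\beta$ for all $\beta < \om_1$, making $\{\beta : \alpha^* \notin C'_\beta\}$ uncountable, a contradiction. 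Hence the desired pair exists, so $U$ and $V$ cannot be disjoint, $Y$ is not normal, and therefore $L(\om_2)^2$ is not hereditarily normal.

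The only place where the value $\om_2$ (rather than $\om_1$) is essential is in this final step: $\om_2$ cannot be covered by $\om_1$-many countable sets, which is precisely what makes the pair-finding succeed. I anticipate that the closedness check and the extraction of the boxes $C_\alpha, C'_\beta$ are both straightforward, and that all the real content lives in this last counting argument; writing it carefully, and being explicit that the argument fails at $\om_1$, is the main thing to get right.
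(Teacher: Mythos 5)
Your proof is correct, and it follows the paper's skeleton exactly --- same deleted-corner subspace $Y = L(\om_2)^2\setminus\{(\om_2,\om_2)\}$, same pair of closed ``edges'', and the same extraction of countable sets $C_\alpha, C'_\beta$ from basic boxes inside the putative separating sets $U, V$ --- but it diverges at the crux. The paper finds the contradicting pair by first producing, via an elementary submodel $M\prec H(\om_3)$ of size $\om_1$ containing $\{A_\alpha : \alpha<\om_2\}$, a closure ordinal $\delta=M\cap\om_2\ge\om_1$ with $A_\alpha\subseteq\delta$ for all $\alpha\le\delta$, and only then runs a counting argument (any $\beta>\delta$ works, paired with any of the $\om_1$-many $\alpha<\delta$ avoiding the countable set $B_\beta$). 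You instead argue by pure pigeonhole: if every pair fails, then $\{\beta : \alpha\notin C'_\beta\}\subseteq C_\alpha$ is countable for each $\alpha$, yet any $\alpha^*\in\om_2\setminus\bigcup_{\beta<\om_1}C'_\beta$ (which exists since $\om_2$ is not the union of $\om_1$-many countable sets) has $\om_1\subseteq\{\beta:\alpha^*\notin C'_\beta\}$, a contradiction. Your version is more elementary --- no submodels, just the regularity of $\om_2$ --- and it isolates precisely where $\om_2$ is needed, as you note: at $\om_1$ the covering step fails. The submodel route buys a reusable closure-point technique (the same device appears implicitly in arguments elsewhere in this area), but for this lemma in isolation your counting argument is shorter and self-contained. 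One tiny point of care if you write it up: when verifying $A$ is closed, the case split should be over points of $Y\setminus A$ (both coordinates isolated; first coordinate $\om_2$, second isolated), and your phrasing about a neighborhood ``meeting $A$ only in $(\gamma,\infty)$'' belongs to the observation that points of $A$ are isolated \emph{in} $A$, not to the closedness check itself --- but this is cosmetic, not a gap.
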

\begin{proof}
More precisely, $Y = L(\om_2)\times L(\om_2) \setminus \{ (\om_2, \om_2) \}$ is not normal, because the sets $H= (L(\om_2) \setminus \{ \om_2\}) \times \{ \om_2 \}$ and $K= \{ \om_2 \} \times (L(\om_2) \setminus \{ \om_2\})$ are disjoint and closed in $Y$, and can not be separated by disjoint open sets.

Indeed suppose $U$ and $V$ be any open neighborhoods around $H$ and $K$, respectively. For every $(\alpha, \om_2) \in H$ choose $A_\alpha \in [D(\om_2)]^\om$ such that $\{ \alpha \} \times (L(\om_2) \setminus A_\alpha) \sub U$, and similarly, for $(\om_2, \beta) \in K$ choose sets $B_\beta$ such that $(L(\om_2) \setminus B_\beta) \times \{ \beta \} \sub V$. 
Then there is $\delta \geq \om_1$ such that for every $\alpha \le \delta$, $A_\alpha \sub \delta$.  (To see this, let $M \prec H(\om_3)$ be an elementary submodel of size $\om_1$ such that $\{ A_\alpha : \alpha < \om_2 \} \in M$, set $\delta = M \cap \om_2 \in \om_2$, and now, for $\alpha \in M$, $M$ thinks `$A_\alpha$ is contained in $M$', and so does $H(\om_3)$.) Now a counting argument easily shows $U$ and $V$ meet.
\end{proof}

\begin{lemma}\label{LemmaSxSNotNormal}
Let $S$ be a stationary subset of a regular uncountable cardinal $\kappa$. Then, $S \times (S \cup \{\kappa\})$ (as a subspace of $(\kappa+1)^2$) is not normal.
\end{lemma}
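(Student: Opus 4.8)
The plan is to exhibit two disjoint closed subsets of $X = S \times (S \cup \{\kappa\})$ that cannot be separated by disjoint open sets, the obstruction coming from a pressing-down (Fodor) argument. I would take the \emph{diagonal} $A = \{(\alpha,\alpha) : \alpha \in S\}$ and the \emph{top edge} $B = S \times \{\kappa\}$. Since $S \subseteq \kappa$ we have $\kappa \notin S$, so $A$ and $B$ are disjoint. First I would verify both are closed in $X$: $B$ is closed because $\{\kappa\}$ is closed in $S \cup \{\kappa\}$; and $A$ is closed because it is the diagonal of the Hausdorff space $S$, hence closed in the open subspace $S \times S = X \setminus B$, while no point $(\beta,\kappa) \in B$ is a limit of $A$, since a basic neighborhood whose second factor is $(\beta,\kappa] \cap (S \cup \{\kappa\})$ forces the second coordinate above $\beta$ whereas its first factor (a neighborhood of $\beta$ in $S$) keeps the first coordinate at most $\beta$, so it misses the diagonal.

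Next, assume toward a contradiction that $X$ is normal and fix disjoint open sets $U \supseteq A$ and $V \supseteq B$. From $V$ I extract, for each $\alpha \in S$, an ordinal $\beta_\alpha < \kappa$ with the vertical tail $\{\alpha\} \times ((\beta_\alpha,\kappa] \cap (S \cup \{\kappa\})) \subseteq V$, using that $(\alpha,\kappa) \in V$ and $V$ is open. From $U$ I extract, for each $\alpha$ in the stationary set $S' = \{\alpha \in S : \alpha \text{ a limit}\}$ (the limit ordinals form a club, so $S'$ is stationary), a box $((\gamma_\alpha,\alpha] \cap S)^2 \subseteq U$ with $\gamma_\alpha < \alpha$, using that $(\alpha,\alpha) \in U$ is open and $\alpha$ is a limit.

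The map $\alpha \mapsto \gamma_\alpha$ is regressive on $S'$, so by Fodor's lemma there is a stationary $S_0 \subseteq S'$ and a fixed $\gamma^* < \kappa$ with $((\gamma^*,\alpha] \cap S)^2 \subseteq U$ for every $\alpha \in S_0$. The final step forces a point into $U \cap V$: pick $\alpha' \in S_0$ with $\alpha' > \gamma^*$, then pick $\alpha \in S_0$ with $\alpha > \max(\alpha', \beta_{\alpha'}, \gamma^*)$, which is possible since $S_0$ is stationary, hence unbounded. Then $(\alpha',\alpha)$ lies in $((\gamma^*,\alpha] \cap S)^2 \subseteq U$, because $\gamma^* < \alpha' < \alpha$ and $\alpha \le \alpha$; and it lies in $\{\alpha'\} \times ((\beta_{\alpha'},\kappa] \cap (S \cup \{\kappa\})) \subseteq V$, because $\alpha > \beta_{\alpha'}$ and $\alpha \in S$. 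This contradicts $U \cap V = \emptyset$.

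I expect the main subtlety to be that the map $\alpha \mapsto \beta_\alpha$ coming from the top edge $B$ need not be regressive, so Fodor's lemma cannot be applied to it directly. The resolution is that one does not need uniform control of $V$ across all columns: it suffices to fix a single column $\alpha'$ and then exploit that the stabilized, \emph{unbounded} set $S_0$ supplies an $\alpha$ large enough to climb above $\beta_{\alpha'}$ while keeping $(\alpha',\alpha)$ inside the Fodor-stabilized diagonal box. Arranging the two coordinates of this one intersection point to meet the $U$-constraint and the $V$-constraint simultaneously is the crux of the argument.
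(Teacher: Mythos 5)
Your proof is correct and follows exactly the route the paper sketches: the same two closed sets (the diagonal and the top edge $S\times\{\kappa\}$) and the same Pressing Down Lemma argument, with you supplying in full the details the paper leaves as ``standard'' (the regressive map from the diagonal boxes, Fodor stabilization to a single $\gamma^*$, and using unboundedness of the stationary set to climb past $\beta_{\alpha'}$ in a fixed column). No gaps; the verification of closedness and the final intersection point $(\alpha',\alpha)\in U\cap V$ both check out.
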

\begin{proof}
Consider the diagonal $H = \{ (\alpha, \alpha) : \alpha \in S \}$ and the top-edge $K= \{ (\alpha , \kappa ) : \alpha \in S \}$. Note that $H$ and $K$ are closed disjoint sets. Now, if $U, V$ are neighborhoods of $H,K$, respectively, 
a standard Pressing Down Lemma argument shows that $U$ and $V$ must meet.
\end{proof}


\section{Embeddings into Nabla Products}\label{Stationary}

The following simple embedding result will be used frequently in the sequel. For a space $X$ let $X_\delta$ be the $G_\delta$-modification of $X$ (the space with underlying set $X$ and topology generated by all $G_\delta$ subsets of the space $X$).
\begin{lemma}[Williams \cite{williams1984box}, Lemma~4.4]\label{LemGdeltaEmbed}
Let $X$ be a space. Then $X_\delta$ embeds as a closed subspace in $\nabla X^\omega$ via the map $x \mapsto c_x$, where $c_x$ is constantly equal to $x$.
\end{lemma}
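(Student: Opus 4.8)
\section*{Proof proposal}

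The plan is to show that the map $\phi \colon X_\delta \to \nabla X^\omega$, $x \mapsto \overline{c_x}$, is an injective embedding whose image is closed. The computational workhorse for every step is the following observation about when a constant class lands in a basic neighborhood $N(\overline{c_x},U) = \nabla_n U_n$: for a sequence $U = \langle U_n \rangle_{n\in\om}$ of open subsets of $X$ and a point $y$, the class $\overline{c_y}$ belongs to $\nabla_n U_n$ if and only if $y \in U_n$ for all but finitely many $n$ (a witnessing representative agrees with $c_y$ off the finite set of coordinates where $y \notin U_n$, and is chosen arbitrarily inside $U_n$ there). Injectivity of $\phi$ is then immediate, since distinct $x,y$ yield constant sequences differing in \emph{every} coordinate, hence not mod-finite equivalent.

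To see that $\phi$ is an embedding I would verify continuity and openness onto the image together, using the observation above. The basic open sets of $X_\delta$ may be taken to be $G_\delta$ sets $W = \bigcap_n U_n$ with each $U_n$ open in $X$, and after replacing $U_n$ by $\bigcap_{k\le n} U_k$ we may assume the $U_n$ decrease. For $x \in W$ the sequence $U = \langle U_n \rangle$ has $x \in U_n$ for every $n$, so $\nabla_n U_n$ is a basic neighborhood of $\overline{c_x}$; and since the $U_n$ decrease, the condition `$y \in U_n$ for all but finitely many $n$' is equivalent to $y \in \bigcap_n U_n = W$. Thus $\phi^{-1}(\nabla_n U_n) = W$ and, by injectivity, $\phi(W) = \nabla_n U_n \cap \phi(X)$. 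As the $G_\delta$ sets form a base for $X_\delta$ and the sets $\nabla_n U_n$ form a base for $\nabla X^\omega$, this shows at once that $\phi$ is continuous and sends basic open sets to relatively open sets, hence is a homeomorphism onto its image.

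The main work is showing that the image $\phi(X) = \{\overline{c_x} : x \in X\}$ is closed, equivalently that any class $\overline{z}$ \emph{not} of constant form can be separated from $\phi(X)$. Since $\overline{z} \in \phi(X)$ exactly when $z$ is eventually constant, suppose $z$ is not eventually constant. By the membership observation, a basic neighborhood $\nabla_n V_n$ of $\overline{z}$ is disjoint from $\phi(X)$ precisely when \emph{no} point $y$ lies in all but finitely many of the $V_n$, and producing such a choice is the genuinely nontrivial step. I would do this by a pairing trick: as $z$ is not eventually constant, for every $N$ there are indices $n,m \ge N$ with $z(n) \ne z(m)$, so one can recursively pick $n_0 < m_0 < n_1 < m_1 < \cdots$ with $z(n_i) \ne z(m_i)$. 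Since $X$ is Hausdorff (being Tychonoff), choose disjoint open $A_i \ni z(n_i)$ and $B_i \ni z(m_i)$, and set $V_{n_i} = A_i$, $V_{m_i} = B_i$, and $V_k = X$ for all other $k$. If some $y$ lay in all but finitely many $V_k$, then for all large $i$ we would have $y \in A_i \cap B_i = \emptyset$, a contradiction. Hence $\nabla_n V_n$ is a neighborhood of $\overline{z}$ disjoint from $\phi(X)$, so $\phi(X)$ is closed. The pairing device is the crux: it reduces the requirement of an empty `liminf' of the coordinate sets to separating two distinct points infinitely often, which Hausdorffness handles directly, while the remaining steps are routine manipulation of the mod-finite relation and of the $G_\delta$-base for $X_\delta$.
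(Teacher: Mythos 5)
Your proposal is correct and follows the standard route for this classical fact; the paper gives no proof of its own (it simply cites Williams, Lemma~4.4), and your argument --- the mod-finite membership criterion for constant classes, the decreasing-intersection computation for the embedding, and the Hausdorff pairing trick for closedness --- is essentially Williams' original one. The only spot worth one extra line is continuity at an \emph{arbitrary} basic box $\nabla_n V_n$, where $\phi^{-1}(\nabla_n V_n)=\bigcup_{N\in\omega}\bigcap_{n\ge N}V_n$ is open in $X_\delta$, since your displayed preimage computation only covers the decreasing boxes you construct.
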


However the main technical result of this section is about non-embedding.
\begin{prop}\label{embedding of S}
Let $\{ X_n: n\in \om\} $ be a family of metrizable spaces and $S$ a stationary subset of a regular uncountable cardinal $\kappa$. Then $S$ does not embed into $\nabla_{n} X_n$.
\end{prop}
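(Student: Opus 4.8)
The plan is to assume, for contradiction, that $e\colon S\to\nabla_n X_n$ is an embedding and to squeeze the possible cofinalities of the points of $S$ from two sides until nothing is left. First I would discard the non-limit points, replacing $S$ by the still-stationary $S\cap C$, where $C$ is the club of $\alpha<\kappa$ with $\sup(S\cap\alpha)=\alpha$; then every point of $S$ is a limit point of $S$, so $\chi(\alpha,S)=\operatorname{cf}(\alpha)$. Fix compatible metrics $d_n\le 1$ on the $X_n$, and for each $\alpha\in S$ choose a representative $x_\alpha\in\square_n X_n$ of $e(\alpha)$, so that the basic neighborhoods of $e(\alpha)$ are the sets $N(e(\alpha),f)=\nabla_n B_{d_n}(x_\alpha(n),2^{-f(n)})$ for $f\in\baire$.

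The key inequality is a bound on character: at every point of $\nabla_n X_n$ a dominating family indexes a neighborhood base, so $\chi(\nabla_n X_n)\le\mathfrak{d}$. Indeed, if $D\subseteq\baire$ is dominating then $f\ge^* g$ forces $N(\overline z,f)\subseteq N(\overline z,g)$ (shrinking balls, up to finitely many coordinates, which $\nabla$ ignores), so $\{N(\overline z,f):f\in D\}$ is a base at $\overline z$. Since character does not increase in subspaces, $\operatorname{cf}(\alpha)=\chi(\alpha,S)=\chi(e(\alpha),e(S))\le\mathfrak{d}$ for every $\alpha\in S$. For the reverse bound I would fix $\alpha\in S$, take a strictly increasing $\gamma_\xi\nearrow\alpha$ from $S$ of length $\operatorname{cf}(\alpha)$, and read off the disagreement scales $g_\xi(n)\approx\lceil-\log_2 d_n(x_{\gamma_\xi}(n),x_\alpha(n))\rceil$. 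Continuity of $e$ gives $e(\gamma_\xi)\to e(\alpha)$, which translates exactly into: for every $f\in\baire$ some $g_\xi\ge^* f$; thus $\{g_\xi:\xi<\operatorname{cf}(\alpha)\}$ is dominating and $\operatorname{cf}(\alpha)\ge\mathfrak{d}$. Combining the two bounds, every point of $S$ has cofinality exactly $\mathfrak{d}$. This already finishes several cases at once: it forces $\mathfrak{d}$ to be regular (otherwise no admissible cofinality exists, yet limit points do), and if $\mathfrak{d}\ge\kappa$ it is absurd, since a limit point $\alpha<\kappa$ has $\operatorname{cf}(\alpha)<\kappa\le\mathfrak{d}=\operatorname{cf}(\alpha)$.

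This leaves the residual case $S\subseteq\{\alpha<\kappa:\operatorname{cf}(\alpha)=\mathfrak{d}\}$ with $\mathfrak{d}=\lambda$ regular and $\lambda<\kappa$, which I expect to be the main obstacle. Here the cofinality count plateaus, and since such an $S$ has no nontrivial convergent sequences and the right character, neither the $P_{\omega_1}$-structure nor the character bound survives; the contradiction must come from the embedding's neighborhood traces together with the metric. The plan is: as $e$ is an embedding, for each $\alpha$ pick $f_\alpha$ and $\beta_\alpha<\alpha$ with $N(e(\alpha),f_\alpha)\cap e(S)\subseteq e((\beta_\alpha,\alpha]\cap S)$, freeze $\beta_\alpha=\beta^*$ on a stationary $S_1$ by Fodor, then pass to an elementary submodel $M\prec H(\theta)$ of size $\lambda$ with $\lambda\subseteq M$, a fixed dominating family and all the data in $M$, and $\delta=M\cap\kappa\in S_1$ with $\delta=\sup(S_1\cap M)$. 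One would then try to use \cref{LemmaDiagonilizingFunctions} and the fact that $x_\delta$ is approximated to every $M$-scale by $\{x_\gamma:\gamma\in S_1\cap M\}$ to manufacture a single $\gamma\in S_1\cap M$ violating its frozen trace constraint. The genuine difficulty — the step I expect to fight hardest with — is the scale mismatch: the radius $f_\gamma$ controlling $N(e(\gamma),f_\gamma)$ depends on $\gamma$, whereas the approximation of $x_\delta$ is naturally controlled from $\delta$'s side, and there are $\lambda=\mathfrak{d}$ many functions $f_\gamma$ to contend with, so neither domination by a single function nor an off-the-shelf application of the diagonalizing lemma is available; reconciling these scales on a cofinal set of coordinates is the crux.
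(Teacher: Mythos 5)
Your first half is essentially the paper's case $\kappa \le \mathfrak{d}$, and it is correct: the character of any point of $\nabla_n X_n$ is at most $\mathfrak{d}$ via a dominating family of radii, and the reverse bound at limit points is exactly \cref{tightness}, so every $\alpha$ in $\mathop{Lim}(S)\cap S$ would need $\mathop{cf}(\alpha)=\mathfrak{d}$, which is absurd when $\kappa\le\mathfrak{d}$. One small repair there: your disagreement scales $g_\xi(n)\approx\lceil -\log_2 d_n(x_{\gamma_\xi}(n),x_\alpha(n))\rceil$ are undefined on coordinates where the distance is zero, and assigning arbitrary finite values on those coordinates can destroy literal $\le^*$-domination; the clean route is the tightness lower bound itself, proved as in \cref{tightness} by choosing, for each $z$ in a putative small set $Z$, the infinite set $a_z$ of coordinates of genuine disagreement together with a separating function $f_z$, and then applying \cref{LemmaDiagonilizingFunctions} to the pairs $(f_z,a_z)$ -- this sidesteps the zero-distance coordinates entirely.

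The genuine gap is the residual case $\mathfrak{d}<\kappa$, which you explicitly leave unresolved, and your frozen-trace plan there is aimed at the wrong quantifier: you try to contradict injectivity locally around one $\delta$, which forces you to control $\mathfrak{d}$-many per-point radii $f_\gamma$ at once, and \cref{LemmaDiagonilizingFunctions} only accepts \emph{strictly fewer} than $\mathfrak{d}$ inputs, so it can never process $\lambda=\mathfrak{d}$ many constraints -- that is precisely why your crux does not resolve. The paper's \cref{eventually} dissolves the scale mismatch by never using per-point radii: fix a single dominating family $\{f_\mu : \mu<\mathfrak{d}\}$; for each \emph{fixed} $\mu$, continuity yields a regressive $g_\mu$ on $\mathop{Lim}(S)$ with $\varphi[(g_\mu(\alpha),\alpha]]\subseteq N(\varphi(\alpha),2f_\mu)$, and Fodor freezes $g_\mu\equiv\alpha_\mu$ on a stationary $S_\mu$. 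Then for any $\gamma,\delta\in S$ above $\alpha_\mu$, picking $\beta\in\mathop{Lim}(S_\mu)$ above both puts $\varphi(\gamma)$ and $\varphi(\delta)$ inside $N(\varphi(\beta),2f_\mu)$, and the triangle inequality (this is where symmetry of the metrics is used, and why the halved radius $2f_\mu$ appears) gives $\varphi(\gamma)\in N(\varphi(\delta),f_\mu)$ -- a separation statement at the fixed scale $f_\mu$, uniform over the whole tail, with nothing to reconcile across points. Since $\kappa>\mathfrak{d}$ is regular, $\alpha_\infty=\sup_\mu\alpha_\mu<\kappa$, and beyond $\alpha_\infty$ any two values agree at every scale $f_\mu$, hence are equal: $\varphi$ is eventually constant, so it is not an embedding. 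Note that this argument needs no elementary submodels and proves something stronger than non-embeddability (every continuous map is eventually constant), which is why the paper's two-case split closes in a few lines where your submodel plan stalls.
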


Since Balogh and Rudin \cite{balogh1992monotone} showed that  a monotonically normal space is paracompact if and only if it does not contain closed copies of stationary subsets of regular uncountable cardinals, we deduce:
\begin{thr}\label{ThrNablaMNimpliesHP}
Let $\{ X_n: n\in \om\} $ be a family of metrizable spaces. If a subspace $A$ of $\nabla_n X_n$ is monotonically normal then $A$ is hereditarily paracompact.
\end{thr}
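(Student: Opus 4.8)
The plan is to reduce the theorem to the two results already in hand: the non-embedding \cref{embedding of S} and the Balogh--Rudin characterization \cite{balogh1992monotone} of paracompactness among monotonically normal spaces. Since hereditary paracompactness means exactly that every subspace is paracompact, I would fix an arbitrary subspace $B \subseteq A$ and show that $B$ is paracompact; as $B$ is arbitrary, this gives the theorem.

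The first step is to bring $B$ into the scope of the Balogh--Rudin theorem. For this I would invoke the hereditariness of monotone normality recorded in the introduction: restricting a monotone normality operator for $A$ to $B$ yields one for $B$, so $B$ is monotonically normal. By Balogh--Rudin, such a $B$ is paracompact if and only if it contains no closed copy of a stationary subset of a regular uncountable cardinal. The second step is to rule out any such copy. Here I would use that $B \subseteq A \subseteq \nabla_n X_n$, so every subspace of $B$ is also a subspace of $\nabla_n X_n$; hence if some stationary $S \subseteq \kappa$ (with $\kappa$ regular uncountable) embedded into $B$ in any fashion, it would embed into $\nabla_n X_n$, contradicting \cref{embedding of S}. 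Thus $B$ contains no copy of $S$ at all, a fortiori no closed copy, and Balogh--Rudin yields that $B$ is paracompact.

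Given these ingredients the deduction is essentially immediate, so the genuine content of the argument lives entirely in \cref{embedding of S}; within the present proof there is no real obstacle beyond bookkeeping. The one point requiring care is the \emph{order} of the steps: the hereditariness of monotone normality must be applied to $B$ before invoking the Balogh--Rudin characterization, since that characterization presupposes monotone normality of the space under consideration, and the non-embedding result must be applied to $B$ as a subspace of $\nabla_n X_n$ rather than only to $A$.
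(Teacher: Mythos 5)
Your proposal is correct and follows exactly the paper's intended argument: the paper derives \cref{ThrNablaMNimpliesHP} as an immediate consequence of \cref{embedding of S} together with the Balogh--Rudin characterization of paracompactness for monotonically normal spaces, using the hereditariness of monotone normality in just the way you spell out. The only difference is that the paper leaves the bookkeeping (passing to an arbitrary subspace $B\subseteq A$ and noting that any copy of a stationary set in $B$ would embed in $\nabla_n X_n$) implicit, whereas you make it explicit.
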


\begin{proof}[Proof of \cref{embedding of S}]  Suppose, for a contradiction,  $\varphi: S \to \nabla_n X_n$ is an embedding. We split the proof in two cases depending on the size of $\kappa$. 
If $\kappa \leq \mathfrak{d}$, then any $\alpha$ in 
$\mathop{Lim}(S)$ has $\mathop{cf}(\alpha) \neq \mathfrak{d}$, so $S$, and $A=\varphi(S)$, have limit points but no points of character $\mathfrak{d}$, contradicting \cref{tightness}.
If $\kappa > \mathfrak{d}$, by \cref{eventually} the map $\varphi$ is eventually constant, thus it cannot be an embedding.
\end{proof}

Recall that the \emph{character} of a topological space $X$ at a point $x$ is the cardinality $\chi (x, X)$ of the smallest local base for $x$. The \emph{tightness} at a point $x$ in $X$, denoted $t(x, X)$, is the smallest cardinal  $\kappa$ such that whenever $x\in \overline{Y}$ for some $Y\sub X$, there exists a subset $Z\sub Y$, with $x \in \overline{Z}$ and $|Z| \leq \kappa$.

\begin{lemma}\label{tightness}
Let $\{ X_n : n\in \om\}$ be a family of first countable spaces. For any point $x\in \overline{A} \setminus A$, where $A \subseteq \nabla_n X_n$, we have that $t(x, A \cup \{ x \})= \mathfrak{d} = \chi (x, A \cup \{ x\})$.
\end{lemma}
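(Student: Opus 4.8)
The plan is to establish the chain $\mathfrak{d} \le t(x, A \cup \{x\}) \le \chi(x, A \cup \{x\}) \le \mathfrak{d}$; since tightness never exceeds character, the two outer inequalities then force $t = \chi = \mathfrak{d}$. Throughout I fix, for each $n$, a \emph{decreasing} neighborhood base $\{ U_n^k : k\in \om \}$ at $x(n)$ (which one may always arrange by taking finite intersections), and recall that, since each $X_n$ is Tychonoff and hence $T_1$, $\bigcap_k U_n^k = \{ x(n) \}$. The fact used repeatedly is the description of basic neighborhoods: $\zbar \in N(\xbar, f)$ if and only if $z(n) \in U_n^{f(n)}$ for all but finitely many $n$ (one is free to correct finitely many coordinates of a representative, and each $U_n^{f(n)}$ is nonempty).

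For the upper bound I would first show $\chi(\xbar, \nabla_n X_n) \le \mathfrak{d}$ at every point. Let $D \sub \baire$ be a dominating family with $|D| = \mathfrak{d}$. Given any $g \in \baire$, choose $f \in D$ with $g \leq^* f$; nesting of the bases gives $U_n^{f(n)} \sub U_n^{g(n)}$ for all but finitely many $n$, whence $N(\xbar, f) \sub N(\xbar, g)$ by the description above. Thus $\{ N(\xbar, f) : f \in D \}$ is a local base at $\xbar$, so $\chi(\xbar, \nabla_n X_n) \le \mathfrak{d}$. Intersecting this base with the subspace $A \cup \{x\}$ yields $\chi(x, A \cup \{x\}) \le \mathfrak{d}$.

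The core is the lower bound $t(x, A \cup \{x\}) \ge \mathfrak{d}$, obtained from \cref{LemmaDiagonilizingFunctions}. It suffices to show that any $Z \sub A$ with $x \in \overline{Z}$ has $|Z| \ge \mathfrak{d}$: taking $Y = A$ (so $x \in \overline{Y}$) in the definition of tightness, any witnessing $Z$ then has size $\ge \mathfrak{d}$. Since $x \notin A$, each $\zbar \in Z$ satisfies $\zbar \ne \xbar$; fixing representatives, $a_z := \{ n : z(n) \ne x(n) \}$ is infinite because $z \not\sim x$. Define $g_z \in \baire$ by $g_z(n) = \min\{ k : z(n) \notin U_n^k \}$ when $z(n) \ne x(n)$ (finite, by $T_1$-ness and nesting) and $g_z(n) = 0$ otherwise, so that for $n \in a_z$ one has $z(n) \in U_n^{f(n)}$ exactly when $f(n) < g_z(n)$. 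Suppose $|Z| < \mathfrak{d}$. Applying \cref{LemmaDiagonilizingFunctions} with $\mathcal{G} = \{ g_z : \zbar \in Z \}$ and $\mathcal{A} = \{ a_z : \zbar \in Z \}$ (both of size $< \mathfrak{d}$) produces $f \in \baire$ with $|\{ n \in a_z : f(n) > g_z(n) \}| = \om$ for every $\zbar \in Z$. For each such $n$ we have $z(n) \notin U_n^{f(n)}$; as there are infinitely many of them, $\zbar \notin N(\xbar, f)$. Hence $N(\xbar, f)$ is a neighborhood of $x$ disjoint from $Z$, contradicting $x \in \overline{Z}$. Therefore $|Z| \ge \mathfrak{d}$, giving $t(x, A \cup \{x\}) \ge \mathfrak{d}$.

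The step I expect to be most delicate is this lower bound: one must encode the single topological demand ``$\zbar \in N(\xbar, f)$'' as a mod-finite domination condition on the auxiliary pair $(g_z, a_z)$ with complete accuracy, so that the function $f$ delivered by \cref{LemmaDiagonilizingFunctions} corresponds precisely to a basic neighborhood of $x$ avoiding all of $Z$. Once the decreasing bases and this dictionary are in place, the character upper bound and the inequality $t \le \chi$ are routine, and the three bounds collapse to $t(x, A \cup \{x\}) = \chi(x, A \cup \{x\}) = \mathfrak{d}$.
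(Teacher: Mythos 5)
Your proposal is correct and follows essentially the same route as the paper: the character upper bound via a dominating family, and the tightness lower bound by encoding each $z \in Z$ as a pair $(g_z, a_z)$ and applying \cref{LemmaDiagonilizingFunctions} to produce a basic neighborhood $N(\xbar,f)$ missing all of $Z$. The only (harmless) difference is cosmetic: where the paper reduces to the case that every $x(n)$ is non-isolated, you handle isolated coordinates directly via $T_1$-ness of the factors, which is if anything slightly cleaner.
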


\begin{proof}
It is easy to check that $t(x,A \cup \{x\}) \leq \chi(x,A \cup \{x\}) \leq \chi (x, \nabla_n X_n) = \mathfrak{d}$ (the last equality holds because a local basis of $x$ can be represented by a dominating family of $\baire$). We only have to prove that $t(x,A \cup \{x\}) \ge  \mathfrak{d}$. 

If $x$ is a limit point of $A$, then for infinitely many $n\in \omega$, $x(n)$ is non-isolated in $X_n$. Thus, without loss of generality we may assume that for all $n\in \om$, $x(n)$ is non-isolated in $X_n$. Let $\{ B_m (n) : m \in \omega \}$ be a decreasing countable local basis for $x(n)$, for every $n\in \om$.

Suppose for a contradiction that there is $Z \subseteq A$ with $|Z| < \mathfrak{d}$ and $x\in cl_{A\cup \{x\}} (Z)$. For every $z\in Z$, there is an infinite set $a_z \subseteq \omega$ such that for $n\in a_z$, $z(n) \neq x(n)$. Also, for every $z \in Z$ there is a function $f_z \in \omega^\omega$ such that for $ n\in a_z$, $z(n) \notin B_{f_z(n)} (n)$. Thus, $z\notin N(x, f_z)$. Let $\mathcal{G} = \{ f_z : z\in Z\}$ and $\mathcal{A} = \{ a_z : z\in Z \}$. By \cref{LemmaDiagonilizingFunctions}, there is $f\in \omega^\omega$ diagonalizing the families $\mathcal{G}$ and $\mathcal{A}$. Then, for any $z\in Z$, $z\notin N(x, f)$, contradicting  that $x \in cl_{Z \cup \{ x \}}(Z)$.
\end{proof}

\begin{lemma}\label{eventually}
Let $\{ ( X_n, d_n) : n\in \om \}$ be a family of metric spaces and $S$ a stationary subset of a regular uncountable cardinal $\kappa >\mathfrak{d}$. Then, every continuous function $\varphi: S \to \nabla_n X_n$ is eventually constant.
\end{lemma}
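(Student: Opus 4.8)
The plan is to exploit that $\nabla_n X_n$ has character exactly $\mathfrak{d}$ together with a pressing-down argument, but to run the pressing-down \emph{one dominating function at a time}, so as to sidestep the points of cofinality $\mathfrak{d}$ (where $\varphi$ genuinely need not be locally constant) and avoid any case analysis on $\mathrm{cf}(\alpha)$. Fix a dominating family $D \sub \baire$ with $|D| = \mathfrak{d}$, and assume each $f \in D$ satisfies $f(n) \geq 1$ for all $n$. For each $\alpha \in S$ fix a representative $z^\alpha \in \square_n X_n$ of $\varphi(\alpha)$ and take the metric balls $B(z^\alpha(n), 2^{-f(n)})$ as the base coded by $f$, so that $N(\varphi(\alpha), f) = \nabla_n B(z^\alpha(n), 2^{-f(n)})$. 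The first fact I would record is that for any $\overline w$ and any dominating family $D'$ one has $\bigcap_{f \in D'} N(\overline w, f) = \{\overline w\}$: if $\overline p \neq \overline w$ then $a = \{ n : p(n) \neq w(n)\}$ is infinite, and a single $f \in D'$ dominating $n \mapsto \lceil -\log_2 d_n(p(n), w(n)) \rceil$ on $a$ already ejects $\overline p$ from $N(\overline w, f)$. (This is just the statement $\chi(\overline w, \nabla_n X_n) = \mathfrak{d}$.)

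Next, for each fixed $f \in D$ I would homogenize continuity by pressing down. Restricting to the stationary set $S_0$ of limit ordinals of $S$, continuity gives for each $\alpha \in S_0$ some $\gamma_f(\alpha) < \alpha$ with $\varphi[\,S \cap (\gamma_f(\alpha), \alpha)\,] \sub N(\varphi(\alpha), f)$; the map $\gamma_f$ is regressive, so by Fodor's lemma there are a stationary $T_f \sub S_0$ and an ordinal $c_f < \kappa$ with $\gamma_f \equiv c_f$ on $T_f$. Thus $\varphi[\,S \cap (c_f, \alpha)\,] \sub N(\varphi(\alpha), f)$ for every $\alpha \in T_f$. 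Since $|D| = \mathfrak{d} < \kappa = \mathrm{cf}(\kappa)$, the ordinal $c^* = \sup_{f \in D} c_f$ is below $\kappa$.

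The final and key step is to show $\varphi$ is constant on the tail $S \cap (c^*, \kappa)$, which is the desired eventual constancy. Given $\beta_0, \beta_1 \in S \cap (c^*, \kappa)$ and any $f \in D$, use that $T_f$ is unbounded to choose an anchor $\alpha \in T_f$ above both $\beta_0$ and $\beta_1$; then $\beta_0, \beta_1 \in S \cap (c_f, \alpha)$, so both $\varphi(\beta_0)$ and $\varphi(\beta_1)$ lie in $N(\varphi(\alpha), f)$. Here the metric enters decisively: the triangle inequality yields $d_n(z^{\beta_0}(n), z^{\beta_1}(n)) < 2 \cdot 2^{-f(n)} = 2^{-(f(n)-1)}$ for cofinitely many $n$, that is, $\varphi(\beta_0) \in N(\varphi(\beta_1), f-1)$. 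As this holds for every $f \in D$ and $\{ f - 1 : f \in D\}$ is again dominating, the first observation forces $\varphi(\beta_0) \in \bigcap_{f \in D} N(\varphi(\beta_1), f-1) = \{\varphi(\beta_1)\}$, so $\varphi(\beta_0) = \varphi(\beta_1)$.

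I expect the only real subtlety, and the reason a more naive argument fails, to be the temptation to fix a single $\alpha$ and intersect the $\mathfrak{d}$ neighborhoods $N(\varphi(\alpha), f)$ to pin $\varphi$ down near $\alpha$: this needs $\sup_{f \in D} \gamma_f(\alpha) < \alpha$, which fails precisely when $\mathrm{cf}(\alpha) = \mathfrak{d}$, and such points can exhaust $S$. Homogenizing each $f$ separately by Fodor and only then recombining through the triangle inequality is exactly what removes all dependence on $\mathrm{cf}(\alpha)$; correspondingly, it is metricity, and not mere first countability, that is used in an essential way in the recombination.
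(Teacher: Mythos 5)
Your proof is correct and follows essentially the same route as the paper's: fix a dominating family, press down once per dominating function via Fodor to get a constant $c_f$ on a stationary $T_f$, take the supremum $c^*<\kappa$ using $\mathfrak{d}<\kappa=\mathrm{cf}(\kappa)$, and then pin down any two values on the tail by anchoring at some $\alpha\in T_f$ above both and applying the triangle inequality (your radius bookkeeping $2^{-f(n)}$ versus $f-1$ matches the paper's $2f_\mu$ versus $f_\mu$). The closing observation that a dominating family of neighborhoods intersects to a singleton is exactly how the paper concludes as well.
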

\begin{proof} For each $n\in \om$, write $B_n(a,\varepsilon)$ for the $\varepsilon$-ball around $a$ in $X_n$ with respect to the metric $d_n$. 
Let $\{ f_\mu : \mu < \mathfrak{d} \} \subseteq \omega^\omega$ be a dominating family. For every $x\in \nabla_n X_n$ and $\mu < \mathfrak{d}$, define $N(x, f_\mu) = \nabla_{n\in \omega} B_n (x(n) , \rfrac{1}{f_\mu (n)})$; $\{ N(x, f_\mu) : \mu < \mathfrak{d}\}$ is a local basis at $x$.  

Fix, for the moment, $\mu < \mathfrak{d}$. 
For every $\alpha \in Lim(S)$, pick $g_\mu (\alpha) < \alpha$, $g_\mu (\alpha) \in S$, such that $\varphi[(g_\mu (\alpha), \alpha]] \subseteq N(\varphi(\alpha), 2 f_\mu)$. Then, $g_\mu$ is a regressive function and by the Pressing Down Lemma, there is $\alpha_\mu$ in $S$ and a stationary set $S_\mu \subseteq S$ such that for any $\beta \in S_\mu, g_\mu(\beta) = \alpha_\mu$.

We claim that for all $\delta, \gamma > \alpha_{\mu}$, where $\delta$ and $\gamma$ are in $S$, $\varphi(\gamma)$ is in $N(\varphi(\delta), f_{\mu})$. 
To see this, take any $\delta$ and $\gamma$ strictly larger than $\alpha_\mu$ in $S$. 
Since $S_\mu$ is stationary, there is a $\beta$ in $\mathop{Lim}(S_\mu)$ with $\beta > \max\{ \gamma, \delta \}$. Then, $\{\varphi(\gamma), \varphi(\delta) \} \subseteq \varphi[(\alpha_\mu, \beta]] = \varphi[(g_{\mu}(\beta), \beta]] \subseteq N(\varphi(\beta), 2 f_{\mu})$. 
By definition of $N(\varphi(\beta), 2 f_{\mu})$, for all  $n\in  \omega$  we have that $\varphi(\gamma)(n)$ and $\varphi(\delta)(n)$ are in $B_n(\varphi(\beta)(n), \rfrac{1}{2 f_{\mu} (n)})$. 
Then by symmetry and the triangle inequality, for all $n\in \om$, we get 
$d_n(\varphi(\gamma)(n), \varphi(\delta)(n)) < \rfrac{1}{f_{\mu} (n)}$. This implies $\varphi(\gamma) \in N(\varphi(\delta), f_{\mu})$, as claimed.

Now, as we let $\mu$ run over all values below $\mathfrak{d}$, since $\kappa > \mathfrak{d}$, there is a least upper bound $\alpha_\infty$ of $\{ \alpha_\mu : \mu < \mathfrak{d} \}$ in $S$. Notice that by the claim above,  for any $\mu < \mathfrak{d}$ and $\gamma, \delta \in S \setminus \alpha_\infty$, we have $\varphi(\gamma) \in N(\varphi(\delta), f_\mu)$, and so $\varphi(\gamma) = \varphi(\delta)$. Hence $\varphi$ is constant from $\alpha_\infty$ on, as desired.
\end{proof}

\section{\texorpdfstring{$\Delta$}{Delta} and \texorpdfstring{$\nabla (\om+1)^\om$}{Nabla}}
\label{S:Delta_and_Nabla}


\subsection{Roitman's Principle \texorpdfstring{$\Delta$}{Delta}}\label{Delta}

In order to state our parametrized versions of Roitman's $\Delta$ principle we introduce some specific definitions and notation, naturally extending those of Roitman, for partial functions.

For any function $x: N \to \omega$, where $N \subseteq \omega$, consider $x$ to be a \emph{partial function} from $\omega$ to $\omega$ and write $\mathop{dom} x$ for $N$, the domain of $x$. We identify a partial function with its graph, which is a subset of $\omega \times \omega$. Then two partial functions, $x$ and $y$, are \emph{almost equal},  $x =^* y$, if $x\setminus y$ and $y \setminus x$ are both finite. 
Let $\omega^{\subseteq \omega}$ be the set of all partial functions, including the empty function. 
Denote by $\omega^{\subset \omega}$ the set of all partial functions whose domain  is infinite and co-infinite. 
For any subset $A$ of $\om^{\subseteq \om}$ let $A^* = \{ y \in \om^{\subseteq \om} : y =^* x$ for some $x\in A\}$. 
For $k\leq \om$, let $c_k$ be the constant $k$-valued function in $(\om+1)^\om$. 
Let $x$ be a partial function and $h\in \baire$, then we say that $x >^* h$  if for all but finitely many $n\in dom(x)$, $x(n) > h(n)$.

\begin{de}
Two partial functions, $x$ and $y$, \emph{switch} if $|x\setminus y|= |y\setminus x| = \omega$ and $x(n)=y(n)$ for all but finitely many $n$ in $\mathop{dom} x \cap \mathop{dom} y$.
\end{de}

\begin{de}\label{delta}
Let $A$ be any subset of $\omega^{\subseteq \omega}$. Then \emph{$\Delta(A)$} is the statement:
there exists  $F : A \to \omega^\omega$ such that if $x,y \in A$ switch, then $x\setminus y \ngtr^* F(y)$ or $y\setminus x \ngtr^* F(x)$.
\end{de}

\begin{lemma}\label{L:EqvtDeltas}
Let $A$ and $B$ be subsets of $\omega^{\subseteq \omega}$. Then: 

(1) if $A \subseteq B \subseteq \omega^{\subseteq \omega}$ then $\Delta (B) \implies \Delta (A)$, and

(2) $\Delta(A) \iff \Delta (B)$  when $A \cap \omega^{\subset \omega} \subseteq B \subseteq (A \cup \om^\om \cup \{\emptyset\})^*$.
\end{lemma}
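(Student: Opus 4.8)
The plan is to get part (1) for free, isolate two structural facts about $\Delta$, and then assemble part (2) from an inclusion chain. For part (1), if $F$ witnesses $\Delta(B)$ then $F \rest A$ witnesses $\Delta(A)$: whether $x,y$ switch and whether $x\setminus y >^* F(y)$ depend only on $x,y$ and the values $F(x),F(y)$, none of which change under restriction. I record two facts that will drive part (2).

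First I would observe that \emph{only} partial functions with infinite and co-infinite domain — i.e.\ members of $\partials$ — can participate in a switch. Indeed, if $x,y$ switch then, since they agree mod finite on $\mathop{dom}(x)\cap\mathop{dom}(y)$, the set $x\setminus y$ agrees up to a finite set with $\{(n,x(n)): n\in\mathop{dom}(x)\setminus\mathop{dom}(y)\}$, and similarly for $y\setminus x$. So $|x\setminus y|=|y\setminus x|=\omega$ forces $\mathop{dom}(x)\setminus\mathop{dom}(y)$ and $\mathop{dom}(y)\setminus\mathop{dom}(x)$ to be infinite, whence $\mathop{dom}(x)$ (and $\mathop{dom}(y)$) is infinite and co-infinite. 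Consequently the switching hypothesis in $\Delta(C)$ is vacuous off $\partials$, and I claim $\Delta(C)\iff\Delta(C\cap\partials)$ for every $C$: the forward direction is part (1), while any witness for $\Delta(C\cap\partials)$ extends to a witness for $\Delta(C)$ by assigning, say, the constant function $c_0$ to every element of $C\setminus\partials$.

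Second I would show $\Delta$ is invariant under almost-equality, $\Delta(C)\iff\Delta(C^*)$, which I expect to be the main technical step. The key point is that switching and the relation $>^*$ are insensitive to finite changes of graphs: if $x=^*x'$ then $x\setminus y=^*x'\setminus y$ and $y\setminus x=^*y\setminus x'$ (their symmetric differences sit inside $x\triangle x'$), so $x$ switches $y$ iff $x'$ switches $y$, and $x\setminus y>^*h\iff x'\setminus y>^*h$ for every $h\in\baire$. Given a witness $F$ for $\Delta(C)$, I fix a choice function $\pi:C^*\to C$ with $\pi(z)=^* z$ and set $G(z)=F(\pi(z))$; if $y,y'\in C^*$ switch, then $\pi(y),\pi(y')$ switch in $C$, and pushing the conclusion of $\Delta(C)$ back through the two almost-equalities gives $y\setminus y'\ngtr^* G(y')$ or $y'\setminus y\ngtr^* G(y)$. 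The reverse implication is again part (1). The only delicate part is the careful bookkeeping of the graph differences under finite modifications.

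To finish part (2), I would first note that a finite change preserves a domain being infinite and co-infinite (so $=^*$ moves nothing in or out of $\partials$), that $(\omega^\omega)^*$ consists of cofinite-domain functions, and that $\{\emptyset\}^*$ consists of finite functions; hence $(A\cup\omega^\omega\cup\{\emptyset\})^*\cap\partials=A^*\cap\partials=(A\cap\partials)^*$. The hypotheses then give the chain $A\cap\partials\sub B\cap\partials\sub (A\cap\partials)^*$. The two structural facts yield $\Delta(A\cap\partials)\iff\Delta((A\cap\partials)^*)$, and since these are the two ends of the chain, part (1) squeezes $\Delta(B\cap\partials)$ between them, making all three equivalent. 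Combining with $\Delta(A)\iff\Delta(A\cap\partials)$ and $\Delta(B)\iff\Delta(B\cap\partials)$ delivers $\Delta(A)\iff\Delta(B)$.
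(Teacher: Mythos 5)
Your proposal is correct and takes essentially the same route as the paper: the paper's own proof rests on exactly your two observations -- that elements outside $\partials$ (the paper notes this for $\om^\om \cup \{\emptyset\}$, whose $=^*$-closure covers all finite- and cofinite-domain functions) never switch with anything, and that switching and the $>^*$-conclusion are invariant under $=^*$ -- and then closes the cycle using part (1). Your only variation is organizational: you package these observations as stand-alone equivalences $\Delta(C)\iff\Delta(C\cap\partials)$ and $\Delta(C)\iff\Delta(C^*)$ and squeeze along the trace chain $A\cap\partials \sub B\cap\partials \sub (A\cap\partials)^*$, whereas the paper proves the single implication $\Delta(A\cap\partials)\implies\Delta((A\cup\om^\om\cup\{\emptyset\})^*)$ and squeezes $B$ between the two end sets via part (1).
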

\begin{proof}
Claim (1) is clear -- simply restrict an $F$ witnessing $\Delta (B)$ to $A$ to get a witness of $\Delta (A)$.

For claim (2) it suffices to show: $\Delta (A \cap \omega^{\subset \omega}) \implies \Delta ((A \cup \om^\om \cup \{\emptyset\})^*)$. Fix $F$ as in $\Delta (A \cap \om^{\subset \om})$. 
Note that if $x$ is in $\om^\om \cup \{\emptyset\}$ then it does not switch with \emph{any} $y$ in $\om^{\subseteq \om}$. 
So we can extend $F$ over $\om^\om \cup \{\emptyset\}$ completely arbitrarily, and it witnesses $\Delta (A \cup \om^\om \cup \{\emptyset\})$.   
Note that if $x =^* x'$ and $y=^* y'$ then $x, y$ switch if and only if $x',y'$ switch, and similarly for the conclusion of $\Delta$. So we can extend $F$ over $A^*$ in the natural way (if $x \in A$, $x=^*x'$ and $x' \notin A$ then set $F(x')=F(x)$) to get a witness of $\Delta ((A \cup \om^\om \cup \{\emptyset\})^*)$, as required.
\end{proof}

Abbreviate $\Delta(\omega^{\subset \omega})$ to $\Delta$, this is Roitman's combinatorial principle in \cite{roitman2011paracompactness} and \cite{roitman2015paracompactness}. 
It is known to be consistently true (under $\mathfrak{b}=\mathfrak{d}$, $\mathfrak{d}=\mathfrak{c}$, $\mathsf{MH}$, and in any forcing extension obtained by adding cofinally many Cohen reals) but it is unknown if it can be consistently false, or it is true in $\mathsf{ZFC}$.

In \cite{roitman2011paracompactness} Roitman showed that  $\Delta$ implies the subspace $\nabla^* = \nablaomega \setminus (\nabla \om^\om \cup \{\topl\})$ of $\nablaomega$ is paracompact. Then, she claimed, without proof, that `$\nabla^*$ is paracompact if and only if $\nablaomega$ is paracompact'. Here lies the gap. Adding  isolated points (like those of $\nabla \om^\om$) to even the best behaved of spaces (discrete, for example) frequently destroys normality and paracompactness (indeed, many classical counter-examples related to normality have this form).

\subsection{\texorpdfstring{$\Delta (A)$}{Delta(A)}-Principles That Hold in \texorpdfstring{$\mathsf{ZFC}$}{ZFC}}

While we only know of consistency proofs for $\Delta=\Delta(\omega^{\subset \omega})$,  there are, however, interesting $A$ such that $\Delta(A)$  is true in $\mathsf{ZFC}$. We present here an example.

Let $\mathrm{INC}$ be the set of all increasing partial functions (so $x \in \om^{\subset \om}$ is in $\mathrm{INC}$ if whenever $m \le n$ are in $\mathop{dom}{x}$ we have $x(m) \le x(n)$). 
Let $\mathrm{FI}$ be all partial functions which are the finite disjoint union of increasing functions 
(so $x$ is in $\mathrm{FI}$ if $\mathop{dom}{x}$ can be partitioned into $S_1, \ldots, S_k$ such that each $x \restriction S_i$ is in $\mathrm{INC}$). We will show: 
 the combinatorial principle $\Delta(\mathrm{FI})$ is true in $\mathsf{ZFC}$.

First we need to show that elements of $\mathrm{FI}$ have a nice representation.
Let $x$ be in $\om^{\subseteq \om}$. Define $\perp x = \{ m \in \mathop{dom} x : \forall n > m, \ x(m) \le x(n)\}$. Set $x^\perp = x \restriction (\perp x)$.
Observe that $x^\perp$ is increasing.
Set $x_0=x^\perp$; and inductively, $x_n = (x \setminus \bigcup_{i<n} x_i)^\perp$. 

\begin{prop}\label{P:FI_rep}
Let $x$ be in $\mathrm{FI}$, say $x = x^0 \cup \cdots \cup x^\ell$ where each $x^i$ is increasing. Then  $x=x_0 \cup \cdots \cup x_\ell$. 
\end{prop}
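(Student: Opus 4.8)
The plan is to recognise this as a Dilworth/Mirsky-type statement: the operation $x \mapsto x^\perp$ peels off an increasing partial function ``from the right'', and iterating it is a greedy chain cover whose length is controlled by the longest strictly descending subsequence. Concretely, call a finite strictly increasing sequence of indices $m_0 < m_1 < \cdots < m_k$ in $\mathop{dom} x$ a \emph{descending chain} (with $k+1$ terms) if $x(m_0) > x(m_1) > \cdots > x(m_k)$. The first observation is that no two terms of a descending chain can lie in a common increasing partial function. Hence, writing $x = x^0 \cup \cdots \cup x^\ell$ as a disjoint union of $\ell+1$ increasing functions and applying pigeonhole, every descending chain in $x$ has at most $\ell+1$ terms.

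Writing $y_n = x \setminus \bigcup_{i<n} x_i$ for the function remaining before the $n$-th peel (so $x_n = y_n^\perp$ and $y_0 = x$), the heart of the argument is the claim, proved by induction on $n$, that if $m \in \mathop{dom} y_n$ then there is a descending chain with $n+1$ terms whose least index is $m$. For $n = 0$ the single-term chain $(m)$ works. For the inductive step, $m \in \mathop{dom} y_n$ with $n \ge 1$ means $m \in \mathop{dom} y_{n-1}$ but $m \notin \perp y_{n-1}$; by the definition of $\perp$ there is therefore some $m' > m$ in $\mathop{dom} y_{n-1}$ with $x(m') < x(m)$. Applying the inductive hypothesis to $m'$ yields a descending chain with $n$ terms starting at $m'$, and since $m < m'$ and $x(m) > x(m')$ we may prepend $m$ to obtain a descending chain with $n+1$ terms whose least index is $m$.

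Combining the two steps finishes the proof: if $y_{\ell+1}$ were nonempty, any $m \in \mathop{dom} y_{\ell+1}$ would yield, via the claim, a descending chain with $\ell+2$ terms, contradicting the pigeonhole bound of at most $\ell+1$. Hence $y_{\ell+1} = \emptyset$, which says exactly that $x \setminus \bigcup_{i \le \ell} x_i = \emptyset$, i.e. $x = x_0 \cup \cdots \cup x_\ell$.

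I expect the only real subtlety to be bookkeeping: fixing the correct \emph{direction} of the induction (peeling from the right forces the witnessing descending chain to grow at its left end) and keeping careful track of indices versus values for partial functions on infinite, co-infinite domains. I would also note in passing that each nonempty $y_n$ does have $y_n^\perp \neq \emptyset$, since the values lie in the well-ordered set $\om$ and any index achieving the minimum value of $y_n$ lies in $\perp y_n$; this shows the peeling is genuinely productive, although for the present statement it is not strictly needed, as the descending-chain bound alone forces termination by stage $\ell$.
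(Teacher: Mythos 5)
Your proof is correct, and it takes a genuinely different route from the paper's. The paper first chops $x$ into finite blocks ${}^i x$ by repeatedly extracting the left-to-right strict records (the $\mathop{Dec}$ operation), so that each block is strictly decreasing of length at most $\ell+1$, and then proves by induction on $j$ that the $j$-th point from the end of each block lies in $\bigcup_{k \le j} x_k$, closing with a somewhat delicate one-to-one correspondence argument between tails of consecutive blocks. You instead analyze the remainders $y_n = x \setminus \bigcup_{i<n} x_i$ directly, via the Mirsky-type invariant: an index survives $n$ peels exactly when it heads a strictly descending chain of $n+1$ terms (you prove, and need, only the forward implication). Both proofs rest on the same pigeonhole observation — a strictly descending chain meets each increasing function at most once, so chains have length at most $\ell+1$ — but your induction replaces the paper's block bookkeeping and index-chasing with a two-line inductive step, since $m \in \mathop{dom} y_n$ means precisely $m \in \mathop{dom} y_{n-1} \setminus \perp y_{n-1}$, which hands you the next chain element to the right. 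What your argument gives up is the finer positional information the paper extracts (exactly which peel $x_k$ each point of each descending block lands in); what it buys is brevity, and a slight strengthening for free: you never use disjointness of the $x^i$, so the same proof shows that any \emph{cover} of $x$ by $\ell+1$ increasing partial functions forces $x = x_0 \cup \cdots \cup x_\ell$. Your side remarks are also accurate: the well-ordering of $\omega$ guarantees each nonempty remainder has a nonempty peel, though, as you say, the chain bound alone already forces $y_{\ell+1} = \emptyset$.
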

\begin{proof}
Evidently each $x_k$ is a subset of $x$, so we need to show $x \subseteq x_0 \cup \cdots \cup x_\ell$. We do so by breaking $x$ into finite pieces, ${}^i x$, each of which is contained in $\bigcup_{k \le \ell} x_k$. 

For any $y \in \om^{\subseteq \om}$ set $\mathop{Dec}(y) \subseteq \mathop{dom} y$ such that $\mathop{min}(\mathop{dom} y) \in Dec(y)$ and $n\in Dec(y)$ with $n > \mathop{min}(\mathop{dom} y)$ if and only if for all $m< n$ and $m\in \mathop{dom} y$, $y(m) > y(n)$. Note that $\mathop{Dec}(y)$ is finite and non-empty, and $y \restriction Dec(y)$ is \emph{strictly} decreasing.
Set ${}^0x = x \restriction \mathop{Dec}(x)$; and inductively, ${}^n x = x \restriction \mathop{Dec}(x \setminus \bigcup_{i  <n } \ {}^i x)$. Note that $x$ is the disjoint union of the ${}^i x$'s.  
Since $x$ is the union of $\ell+1$ increasing functions, the $x^i$,  there can not be a strictly decreasing portion of $x$ of size greater that $\ell+1$.
Hence, for every $i\in \omega$, $0\leq |{}^i x| -1 := \ell_i \leq \ell$.  
Enumerate in increasing order, 
$\mathop{dom} \ {}^i x = \langle n^i_j : j \leq \ell_i \rangle$ (hence, ${}^i x(n^i_j) >  {}^i x(n^i_{j+1})$).
The next claim shows that each ${}^i x$ is contained in $x_0 \cup \cdots \cup x_\ell$, which completes the proof.

\noindent {\bf Claim:} for every $j \leq \ell_i$ and $i\in \omega$, $(n^i_{\ell_i -j} , \ ^i x (n^i_{\ell_i -j}))$ is in  $\bigcup_{k\leq j} x_k$. 

We will proceed by induction on $j$.
First $j=0$. Take any $i\in \omega$. 
The last point $(n^i_{\ell_i} , ^i x (n^i_{\ell_i}))$ has the property that $\forall m > n^i_{\ell_i}$, $^i x (n^i_{\ell_i}) = x (n^i_{\ell_i}) \leq x(m)$. 
Otherwise, if there is  $m > n^i_{\ell_i}$ such that $^i x (n^i_{\ell_i}) > x(m)$, pick $m_i$ the minimum $m > n^i_{\ell_i}$ with that property, and we get $(m_i, x(m_i)) \in \ ^i x$, which contradicts the construction of $^i x$. Hence, $(n^i_{\ell_i -j} ,\ ^i x (n^i_{\ell_i -j})) \in x^j$.

Now suppose that  $\forall m< j$, $\forall i\in \omega$ we have $(n^i_{\ell_i -m} ,\ ^i x (n^i_{\ell_i -m})) \in \bigcup_{k < j}  x_k$. We need to prove that $(n^i_{\ell_i -j} ,\ ^i x (n^i_{\ell_i -j})) \in x_{j}$. The point $(n^i_{\ell_i -j} ,\ ^i x (n^i_{\ell_i -j}))$ has the property: $\forall m > n^i_{\ell_i -j}$, $^i x (n^i_{\ell_i -j}) \leq (x \setminus \bigcup_{k \leq j} x_k )(m)$.

Suppose for a contradiction that $m>n^i_{\ell_i -j}$ is the least such that $^i x (n^i_{\ell_i -j})$ is strictly greater than $(x \setminus \bigcup_{k \leq j} x_k )(m)$. There is an $i'> i$ such that $m \in \mathop{dom}  \ ^{i'}x$. 
By inductive hypothesis, we have removed the last $j$ points (from $0$ to $j-1$) of $^i x$, for all $i\in \omega$. Hence, $m$ has the form $n^{i'}_{\ell_{i'} -j}$. 
Consider the decomposition $x = x^0 \cup \ldots \cup x^\ell$, where the $x^k$ are in $\mathrm{INC}$. Observe that for every $i\in \omega$ and $k\leq \ell$, $|x^k \cap \ ^i x| \leq 1$. Also, notice that for every $s \leq \ell_i -j$, if $n^i_{\ell_i -(j+s)} \in \mathop{dom} x^k$, then $n^{i'}_{\ell_{i'} -j} \notin \mathop{dom} x^k$, because $x^k$ is increasing and $x(n^i_{\ell_i -j}) > x(m)$ (our assumption). Thus, there is a one-to-one correspondence between the last $j$ points of $^i x$ and the last $j+1$ points of $^{i'} x$, the desired contradiction.
\end{proof}

We will say that two elements $z,w$ of $\om^{\subseteq \om}$ are \emph{compatible} if for all but finitely many $n$ in $\mathop{dom} z \cap \mathop{dom} w$ we have $z(n)=w(n)$. 
Now let $x$ be an element of $\om^{\subseteq \om}$ with infinite domain.
Note that $x^\perp$ also has infinite domain.
For each $n \notin \mathop{dom} x$ let $n^{+x}$ be the minimal element of $\mathop{dom} x$ larger than $n$. 
Let $\mathrm{INC}^+$ be the set of members of $\mathrm{INC}$ with infinite domain.
For $x$  in $\mathrm{INC}^+$, define $F(x)$ by $F(x)(n)$ is $x(n)$ when $n \in \mathop{dom} x$ and is $x(n^{+x})+1$ otherwise.

\begin{lemma}\label{L:IndStep}
Let $z,w$ be compatible elements of $\om^{\subseteq \om}$ with infinite domain. If $z \setminus w >^* F(w^\perp)$ and $w \setminus z >^* F(z^\perp)$ then $w^\perp =^* z^\perp$
\end{lemma}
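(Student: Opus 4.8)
The plan is to argue by contradiction, exploiting the fact that $z^\perp$ and $w^\perp$ are \emph{increasing} partial functions with infinite domain: assuming $z^\perp =^* w^\perp$ fails, I will exhibit two points of $\mathop{dom} z^\perp$ at which $z^\perp$ strictly decreases, contradicting monotonicity. First note that, since $z$ and $w$ are compatible, $z^\perp(n) = z(n) = w(n) = w^\perp(n)$ holds for all but finitely many $n \in \mathop{dom} z^\perp \cap \mathop{dom} w^\perp$; hence $z^\perp =^* w^\perp$ can fail only through an infinite difference of \emph{domains}, and since the hypotheses are symmetric in $z,w$ I may assume $\mathop{dom} z^\perp \setminus \mathop{dom} w^\perp$ is infinite.

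The first key step is a one-sided domination estimate: for all but finitely many $n \in \mathop{dom} z^\perp \setminus \mathop{dom} w^\perp$ one has $z^\perp(n) > w^\perp(n^{+w^\perp})$, where $n^{+w^\perp}$ is the least element of $\mathop{dom} w^\perp$ exceeding $n$ (which exists as $\mathop{dom} w^\perp$ is infinite). This splits according to whether $n \in \mathop{dom} w$. If $n \notin \mathop{dom} w$, then $(n,z(n)) \in z \setminus w$, so the hypothesis $z \setminus w >^* F(w^\perp)$ applies; as $n \notin \mathop{dom} w^\perp$ forces $F(w^\perp)(n) = w^\perp(n^{+w^\perp}) + 1$, we get $z^\perp(n) = z(n) > w^\perp(n^{+w^\perp})$ at once.

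The delicate case, and the one I expect to be the main obstacle, is $n \in \mathop{dom} w \setminus \perp w$, where the hypothesis gives nothing and I must instead use the \emph{lower-envelope} structure of $w^\perp$. Here $z^\perp(n) = w(n)$ (up to the finite compatibility set), and since $n \notin \perp w$ the tail-minimum $\mu = \mathop{min}\{ w(k) : k > n, \ k \in \mathop{dom} w \}$ satisfies $\mu < w(n)$. The point $m^*$ where this minimum is first attained lies in $\perp w = \mathop{dom} w^\perp$; then $n^{+w^\perp} \le m^*$, and monotonicity of $w^\perp$ gives $w^\perp(n^{+w^\perp}) \le w^\perp(m^*) = \mu < w(n) = z^\perp(n)$, the same conclusion. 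Verifying $m^* \in \perp w$ and chaining these inequalities correctly is the fiddly part, though each step is elementary. Exchanging the roles of $z$ and $w$ (and invoking $w \setminus z >^* F(z^\perp)$) yields the symmetric estimate: for cofinitely many $m \in \mathop{dom} w^\perp \setminus \mathop{dom} z^\perp$, $w^\perp(m) > z^\perp(m^{+z^\perp})$.

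Finally comes the ``bounce'' that collapses an apparent infinite descent into a single monotonicity violation. Pick $n_0 \in \mathop{dom} z^\perp \setminus \mathop{dom} w^\perp$ large enough that both estimates apply above it and that $z,w$ agree on their common domain above it, and set $m_1 = n_0^{+w^\perp} > n_0$, so that $z^\perp(n_0) > w^\perp(m_1)$. If $m_1 \in \mathop{dom} z^\perp$, then $w^\perp(m_1) = z^\perp(m_1)$ by compatibility, giving $z^\perp(n_0) > z^\perp(m_1)$ with $n_0 < m_1$, contradicting that $z^\perp$ is increasing. Otherwise $m_1 \in \mathop{dom} w^\perp \setminus \mathop{dom} z^\perp$, and the symmetric estimate yields $w^\perp(m_1) > z^\perp(n_1)$ with $n_1 := m_1^{+z^\perp} > m_1 > n_0$; then $z^\perp(n_0) > z^\perp(n_1)$ again contradicts monotonicity. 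Either way we reach a contradiction, so $w^\perp =^* z^\perp$.
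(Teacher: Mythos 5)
Your proof is correct, but it takes a genuinely different route from the paper's. The paper argues directly, with no contradiction, via the auxiliary merged function $u=(w\setminus z)\cup z$ (and symmetrically $v=(z\setminus w)\cup w$): the hypothesis $w\setminus z>^*F(z^\perp)$ forces every adjoined point of $w\setminus z$ to lie strictly above the value of $z^\perp$ at the next point of $\perp z$, whence $\perp u=^*\perp z$ and $u^\perp=^*z^\perp$; symmetrically $v^\perp=^*w^\perp$, and compatibility gives $u=^*v$, so $z^\perp=^*u^\perp=^*v^\perp=^*w^\perp$. You instead compare the two envelopes head-on. Your Case~1 estimate (for $n\in\mathop{dom}z^\perp\setminus\mathop{dom}w$, using $z\setminus w>^*F(w^\perp)$ and the off-domain clause $F(w^\perp)(n)=w^\perp(n^{+w^\perp})+1$) is essentially the same computation as the paper's ``general observation'', but your Case~2 --- at $n\in\mathop{dom}w\setminus\perp w$ the tail-minimum argument gives $w^\perp(n^{+w^\perp})\le\mu<w(n)=z^\perp(n)$ with \emph{no} use of the hypothesis --- has no counterpart in the paper: routing the comparison through $u$ makes that case evaporate, since points of $\mathop{dom}w\setminus\perp w$ are never measured against $w^\perp$ there. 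Your two-step ``bounce'' then converts the two one-sided estimates into a single monotonicity violation in $z^\perp$, and the steps you flagged as delicate do check out (the first point $m^*$ realizing the tail minimum is indeed in $\perp w$, and choosing $n_0$ beyond the finitely many exceptions to both estimates and to the compatibility agreement is legitimate, so both Case~A and Case~B of the bounce close correctly). The trade-off: the paper's version is shorter and symmetric --- each hypothesis is consumed once, in isolation, and there is no WLOG or case split on membership in $\mathop{dom}w$ --- while yours avoids the mildly awkward object $u$ (as a union of graphs it is a function only after discarding the finite set where $z,w$ disagree, a point the paper glosses over) and isolates a reusable structural fact about lower envelopes (Case~2) that is genuinely needed once one refuses to pass to the union.
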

\begin{proof}
We will show that $z^\perp =^* u^\perp$ where $u= (w \setminus z) \cup z$. Then symmetrically, $w^\perp =^* v^\perp$ where $v= (z \setminus w) \cup w$, hence by compatibility of $z,w$ we have $u =^* v$, and so $z^\perp =^* w^\perp$, as claimed.
For $z^\perp =^* u^\perp$ it suffices that $\perp z =^* \perp u$, because $u$ equals $z$ on $\perp z \subseteq \mathop{dom} z$, and so $u^\perp = u \restriction (\perp u) =^* z \restriction (\perp z) = z^\perp$. 
Fix $N$ such that for all $n \ge N$ we have $(w \setminus z) (n) > F(z^\perp)(n)$. 

First a general observation: if $k \in \mathop{dom} (w \setminus z)$ and $k \ge N$, then $k \le k^{+z^\perp}$ and 
\[ u(k) = (w \setminus z) (k) > F(z^\perp)(k) \geq z^\perp (k^{+z^\perp}) = u(k^{+z^\perp}).\]
Now take any $n$ in $\perp u$ with $n \ge N$. By the observation $n$ must be in $\mathop{dom} z$ (not $\mathop{dom} (w \setminus z)$) and clearly (as $z \subseteq u$) $n$ is in $\perp z$.

For the other inclusion, take any $n\ge N$ in $\perp z$. Take any $m \in \mathop{dom} u$, $m \ge n$.
If $m \in \mathop{dom} z$ then $u(n) \le u(m)$ because $n \in \perp z$.
Otherwise $m \in \mathop{dom} (w\setminus z)$, then by the observation, $u(m) > z^\perp (m^{+z^\perp}) \ge z(n)$ (as $n \in \perp z$), and $z(n)=u(n)$.
Either way, $u(n) \le u(m)$. Thus $n \in \perp u$, as required.
\end{proof}

We now show $\Delta (\mathrm{FI})$ is true. From \cref{L:EqvtDeltas} we deduce that in fact $\Delta ((\mathrm{FI} \cup \baire \cup \{\emptyset\})^*)$ is true. Note that a partial function is in $\mathrm{FI}^*$ if it is eventually in $\mathrm{FI}$, or equivalently, if it is the finite union of eventually increasing partial functions.

\begin{prop}\label{P:DeltaFI} 
In $\mathsf{ZFC}$ we have $\Delta (\mathrm{FI})$.
\end{prop}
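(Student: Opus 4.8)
The plan is to define the witness $F$ on all of $\mathrm{FI}$ by amalgamating, across the canonical decomposition, the single–increasing–function witness already introduced before \cref{L:IndStep}, and then to run an inductive ``peeling'' argument on switching pairs.

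First I would define $F$. Since only partial functions of infinite domain can switch (if $\mathop{dom} x$ is finite then $x\setminus y$ is finite, so $x$ switches with nothing), I may set $F(x)$ arbitrarily when $\mathop{dom} x$ is finite and concentrate on $x\in\mathrm{FI}$ with infinite domain. For such $x$, take its canonical decomposition $x=x_0\cup\cdots\cup x_\ell$ provided by \cref{P:FI_rep}. Applying the remark that $y^\perp$ has infinite domain whenever $y$ does to the successive remainders $x\setminus\bigcup_{j<i}x_j$, the components of infinite domain form an initial segment $x_0,\ldots,x_{k_x-1}$, each lying in $\mathrm{INC}^+$. I then set $F(x)(n)=\max\{F(x_i)(n): i<k_x\}$, the pointwise maximum of the witnesses already defined on the infinite components; this agrees with the original $F$ on $\mathrm{INC}^+$ and satisfies $F(x)\ge F(x_i)$ pointwise for each $i<k_x$.

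Next I would argue by contradiction. Suppose $x,y\in\mathrm{FI}$ switch yet $x\setminus y>^* F(y)$ and $y\setminus x>^* F(x)$; in particular $x,y$ are compatible with infinite domain. Writing $x^{(i)}=x\setminus\bigcup_{j<i}x_j$ and $y^{(i)}=y\setminus\bigcup_{j<i}y_j$ for the successive remainders (so $x_i=(x^{(i)})^\perp$ and $y_i=(y^{(i)})^\perp$), I would prove by induction on $i$ the assertion that $x_j=^* y_j$ for all $j<i$, together with the invariant $x^{(i)}\setminus y^{(i)}=^* x\setminus y$ and $y^{(i)}\setminus x^{(i)}=^* y\setminus x$. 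For the inductive step, fix $i<k_x$; then $x^{(i)}$ has infinite domain, and $y^{(i)}$ has infinite domain because $y^{(i)}\setminus x^{(i)}=^* y\setminus x$ is infinite. Compatibility is inherited on the common domain, and from $x^{(i)}\setminus y^{(i)}=^* x\setminus y>^* F(y)\ge F(y_i)=F((y^{(i)})^\perp)$, and symmetrically, I obtain $x^{(i)}\setminus y^{(i)}>^* F((y^{(i)})^\perp)$ and $y^{(i)}\setminus x^{(i)}>^* F((x^{(i)})^\perp)$. \cref{L:IndStep} then yields $x_i=(x^{(i)})^\perp=^*(y^{(i)})^\perp=y_i$. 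Feeding $x_i=^* y_i$ back through the bookkeeping (since $\bigcup_{j\le i}x_j=^*\bigcup_{j\le i}y_j$ has domain almost disjoint from each remainder) re-establishes the invariant at stage $i+1$.

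Finally, because the canonical decomposition is finite, the remainder $x^{(k_x)}$ has finite domain; but the invariant gives $x^{(k_x)}\setminus y^{(k_x)}=^* x\setminus y$, which is infinite, while $x^{(k_x)}\setminus y^{(k_x)}\subseteq x^{(k_x)}$ is finite --- a contradiction. Hence no such pair $x,y$ exists, and $F$ witnesses $\Delta(\mathrm{FI})$. I expect the main obstacle to be the mod-finite bookkeeping in the inductive step: checking carefully that $x_i=^* y_i$ allows the two symmetric differences to be carried unchanged (modulo finite sets) from the stage-$i$ remainders to the stage-$(i+1)$ remainders, and confirming that the infinite-domain components genuinely form an initial segment, so that the induction terminates precisely when $x$ exhausts its infinite pieces.
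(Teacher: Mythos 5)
Your proof is correct and follows essentially the same route as the paper's: the canonical decomposition from \cref{P:FI_rep}, the witness $F(x)$ defined as a maximum of the single-increasing-component witnesses, and an induction peeling off successive $\perp$-layers via \cref{L:IndStep}. The only difference is bookkeeping: where the paper first passes to $\mathrm{FI}^+$ via \cref{L:EqvtDeltas} (discarding finite-domain components mod finite) and concludes by showing all components are almost equal, you keep the finite tail, define $F$ from the infinite components directly, and terminate with a contradiction at the first finite remainder $x^{(k_x)}$.
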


\begin{proof}
Take any $x'$  in $\mathrm{FI}$. Then by \cref{P:FI_rep}, $x'$ is the disjoint union $x'_0 \cup x'_1 \cup \cdots \cup x'_{\ell'}$.
Note that $x' =^* x$ where  $x$ is either the empty set, or the disjoint union $x_0 \cup x_1 \cup \cdots \cup x_{\ell}$, and each of $x_0, \ldots, x_\ell$ is in $\mathrm{INC}^+$. 
Let $\mathrm{FI}^+$ be all $x$ in $\mathrm{FI}$ such that all of $x_0, \ldots, x_\ell$ are in $\mathrm{INC}^+$.
Then we have just said that $\mathrm{FI} \subseteq (\{\emptyset\} \cup \mathrm{FI}^+)^*$, hence, by \cref{L:EqvtDeltas}, to show $\Delta (\mathrm{FI})$ it suffices to prove $\Delta (\mathrm{FI}^+)$.
For $x$ in $\mathrm{FI}^+$ define $F(x)$ to be the maximum of $F(x_0), \ldots, F(x_\ell)$. 

Take any $x,y$ in $\mathrm{FI}^+$. They have representation $x=x_0 \cup \cdots \cup x_\ell$ and $y=y_0 \cup \cdots \cup y_m$.
Assume, without loss of generality, that $\ell \le m$. 
Suppose $x$ and $y$ are compatible, $x \setminus y >^* F(y)$ and $y \setminus x >^* F(x)$. To establish $\Delta (\mathrm{FI}^+)$ we show $x \setminus y$ is finite (so $x,y$ do not switch), because $x_n =^* y_n$ for all $n \le \ell$, which we verify by induction on $n$.

Inductively, suppose $x_i =^* y_i$ for all $i<n$. 
Let $z=x \setminus \bigcup_{i<n} x_i$ and $w=y \setminus \bigcup_{i<n} y_i$. 
Then $z =^* x \setminus \bigcup_{i<n} y_i$ and $w =^* y \setminus \bigcup_{i<n} x_i$.
Note that $z^\perp = x_n$ and $w^\perp = y_n$. 
Since $x$ and $y$ are compatible so are $z$ and $w$. 
Note that $z \setminus w =^* (x \setminus \bigcup_{i<n} y_i) \setminus (y \setminus \bigcup_{i<n} y_i) = x \setminus y$.
Hence,  $z \setminus w =^* x \setminus y  >^* F(y) \ge F(y_n)=F(w^\perp)$.
Symmetrically, $w \setminus z >^* F(z^\perp)$.
Thus $z$ and $w$ satisfy the hypotheses of \cref{L:IndStep}, and we see $x_n = z^\perp =^* w^\perp = y_n$, as claimed. 
\end{proof}

\subsection{Halvability and Monotone Normality of \texorpdfstring{$\nabla$}{Nabla}}\label{MNNabla}

We now connect the combinatorial principle $\Delta (A)$ with the topology of $\nablaomega$. 
There is a natural bijection between $(\om+1)^\om$ and $\omega^{\subseteq \omega}$. Indeed given
 $x\in \omega^{\subset \omega}$, we can extend it to $x'$ in $(\omega+1)^\om$  by giving $x'$ value $\omega$ outside the domain of $x$.
 Conversely, given $x$ in $(\om+1)^\om$ we get an element of $\omega^{\subseteq \omega}$ by restricting it to $N=\{n \in \omega : x(n) \in \omega\}$. \emph{Throughout this section we identify $\boxomega$ with $\omega^{\subseteq \omega}$.} 
Recall if $x\in \boxomega$, then we  write $\xbar$ for its equivalence class, $[x]_\sim$, in $\nabla (\om+1)^\om$. We extend this
according to our identification, and given $x$ in $\omega^{\subseteq \omega}$, write $\xbar$ for $[x']_\sim$. 
(Note that $\overline{\emptyset} = \topl$.) 

The set $\nablaomega$ has a natural partial order: for $x$ and $y$ in $\omega^{\subseteq \omega}$ write $\ybar \preceq \xbar$ if and only if for all but finitely many $n$ in $\mathop{dom} x$ we have $y(n)=x(n)$. Note that $\topl$ is the $\preceq$-largest element of $\nablaomega$.
We say that $\xbar$ and $\ybar$ are \emph{compatible} if they have a common $\preceq$-lower bound. 
Proof of the next lemma just requires chasing definitions, and is left to the reader.

\begin{lemma}\label{L:Nabla_technical0} 

Let $x,y$ be in $\omega^{\subseteq \omega}$.

(i) $\xbar$ and $\ybar$ are compatible if and only if for all but finitely many $n$ in $\mathop{dom} x \cap \mathop{dom} y$ we have $x(n)=y(n)$. 

(ii) If $\xbar$ and $\ybar$ are compatible then they have a greatest lower bound, $\zbar = \xbar \wedge \ybar$, where $z = \{(n,k) : x(n)=k=y(n)\} \cup (x \setminus y) \cup (y \setminus x)$.

(iii) $\ybar \npreceq \xbar$ if and only if $\xbar$ and $\ybar$ are not compatible or $\xbar, \ybar$ are compatible but $x \setminus y$ infinite.
\end{lemma}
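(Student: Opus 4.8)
The plan is to extract two elementary observations about graphs of partial functions and then read off all three parts as short computations with the mod-finite order. \emph{Observation (A):} $\ybar \preceq \xbar$ holds if and only if $x \setminus y$ is finite. To see this I would unwind the definition of $\preceq$: the statement ``$y(n) = x(n)$ for all but finitely many $n \in \mathop{dom} x$'' fails exactly at those $n \in \mathop{dom} x$ with either $n \notin \mathop{dom} y$ or $y(n) \ne x(n)$, and this exceptional set is precisely $\mathop{dom}(x \setminus y)$; so the defining condition says exactly that $x \setminus y$ is finite. \emph{Observation (B):} the set $z$ displayed in (ii) is just the union of graphs, $z = x \cup y$, since $\{(n,k):x(n)=k=y(n)\} = x \cap y$ and $(x \cap y) \cup (x \setminus y) \cup (y \setminus x) = x \cup y$.

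With these in hand I would dispatch (i) and (ii) together. For the forward direction of (i), given a common lower bound $\wbar$, observation (A) gives that $x \setminus w$ and $y \setminus w$ are finite; at any $n \in \mathop{dom} x \cap \mathop{dom} y$ outside a finite set we then have $x(n) = w(n) = y(n)$, so $x$ and $y$ agree on all but finitely many common-domain coordinates. For the converse (and for (ii)), I would take $z = x \cup y$ from (B): the compatibility hypothesis says $x$ and $y$ disagree at only finitely many points of $\mathop{dom} x \cap \mathop{dom} y$, so after deleting those finitely many coordinates (which does not alter $\zbar$) the relation $z$ is an honest partial function, and since $x \setminus z$ and $y \setminus z$ are finite, (A) yields $\zbar \preceq \xbar$ and $\zbar \preceq \ybar$. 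This proves the converse of (i) and shows $\zbar$ is a lower bound. To see it is the \emph{greatest} lower bound, let $\wbar$ with $\wbar \preceq \xbar$ and $\wbar \preceq \ybar$ be arbitrary; by (A) both $x \setminus w$ and $y \setminus w$ are finite, hence $z \setminus w \subseteq (x \setminus w) \cup (y \setminus w)$ is finite, and (A) again gives $\wbar \preceq \zbar$.

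Finally, (iii) is immediate from (A): it says $\ybar \npreceq \xbar$ if and only if $x \setminus y$ is infinite, so I only need to check that ``$x \setminus y$ infinite'' matches the stated disjunction. If $\xbar, \ybar$ are compatible this is the second alternative verbatim; otherwise, by part (i) they disagree on infinitely many coordinates of $\mathop{dom} x \cap \mathop{dom} y$, and each such coordinate lies in $\mathop{dom}(x \setminus y)$, so $x \setminus y$ is infinite and the incompatibility alternative holds. The rest is propositional logic. The only point requiring care throughout is the well-definedness noted above --- that the displayed $z$ is a genuine function only after discarding the finitely many coordinates where $x$ and $y$ clash --- but since $\zbar$ is taken modulo finite this causes no difficulty; there is no substantial obstacle, matching the authors' assessment that the lemma is pure definition-chasing.
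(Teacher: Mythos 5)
Your proof is correct, and since the paper leaves this lemma to the reader as ``just chasing definitions,'' your argument is exactly the intended one: reduce everything to the observation that $\ybar \preceq \xbar$ iff $x \setminus y$ is finite, identify the displayed $z$ with $x \cup y$, and read off (i)--(iii). Your explicit note that $z$ is a genuine partial function only after discarding the finitely many coordinates where $x$ and $y$ clash (harmless, since $\zbar$ is taken mod finite) is a point of care the paper itself glosses over.
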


For any $x$ in $\omega^{\subseteq \omega}$ basic neighborhoods around $\xbar$ are of the form 
$N(\overline{x} ,h) = \{ \overline{y} \in \nablaomega : \ybar \preceq \xbar$  and $y\setminus x >^* h \}$, where 
$h\in \baire$.

\begin{lemma}\label{L:Nabla_technical} 

Take any $x,y$ in $\omega^{\subseteq \omega}$ and $f_x,f_y$ in $\baire$.

(i) (a) $\xbar \in N(\xbar,c_0) \subseteq \downarrow \xbar$ and (b) if $\ybar \in N(\xbar,f_x)$ and $\ybar \preceq \zbar \preceq \xbar$ then $\zbar \in N(\xbar,f_x)$.

(ii) $N(\xbar,f_x) \cap N(\ybar,f_y) \ne \emptyset$ if and only if $\xbar \wedge \ybar \in N(\xbar,f_x) \cap N(\ybar,f_y)$, if and only if $\xbar, \ybar$ are compatible, and $y \setminus x >^* f_x$ and $x \setminus y >^* f_y$.
\end{lemma}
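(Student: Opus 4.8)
The plan is to verify each clause by unwinding the three governing definitions — the basic neighbourhood $N(\xbar,h)=\{\ybar : \ybar \preceq \xbar \text{ and } y\setminus x >^* h\}$, the order $\preceq$, and the explicit meet $\xbar\wedge\ybar$ recorded in \cref{L:Nabla_technical0} — while tracking carefully at every step which assertions hold for \emph{all} $n$ and which only for all but finitely many $n$. Nothing here is deep; the whole content is disciplined bookkeeping with cofinite versus finite index sets.

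For part (i)(a) I would note that $\xbar\preceq\xbar$ by reflexivity and that $x\setminus x=\emptyset$ vacuously satisfies $\emptyset >^* c_0$, so $\xbar\in N(\xbar,c_0)$; and every member of $N(\xbar,c_0)$ satisfies $\preceq\xbar$ by definition, giving $N(\xbar,c_0)\subseteq\ \downarrow\xbar$. For part (i)(b) the only nontrivial point, given $\ybar\in N(\xbar,f_x)$ and $\ybar\preceq\zbar\preceq\xbar$, is $z\setminus x >^* f_x$. I would first observe that since $\zbar\preceq\xbar$, only finitely many $n\in\mathop{dom} z\cap\mathop{dom} x$ have $z(n)\neq x(n)$, so all but finitely many elements of $\mathop{dom}(z\setminus x)$ lie in $\mathop{dom} z\setminus\mathop{dom} x$. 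On that set $\ybar\preceq\zbar$ lets me replace $z(n)$ by $y(n)$ for cofinitely many $n$ (and these $n$ lie in $\mathop{dom}(y\setminus x)$), whereupon $y\setminus x >^* f_x$ yields $z(n)=y(n)>f_x(n)$ cofinitely, as needed.

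For part (ii) I would run the cycle (A)$\Rightarrow$(C)$\Rightarrow$(B)$\Rightarrow$(A), writing (A), (B), (C) for the three listed conditions. The implication (B)$\Rightarrow$(A) is immediate: the asserted meet both witnesses nonemptiness and, by its mere existence, records compatibility via \cref{L:Nabla_technical0}. For (A)$\Rightarrow$(C), a point $\wbar$ in the intersection is a common lower bound of $\xbar,\ybar$, giving compatibility; to deduce $y\setminus x >^* f_x$ I would split $\mathop{dom}(y\setminus x)$ into its part in $\mathop{dom} x\cap\mathop{dom} y$ — finite by compatibility and \cref{L:Nabla_technical0} — and its part in $\mathop{dom} y\setminus\mathop{dom} x$, on which $\wbar\preceq\ybar$ replaces $y(n)$ by $w(n)$ and $w\setminus x >^* f_x$ closes the estimate; the inequality $x\setminus y >^* f_y$ is symmetric. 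For (C)$\Rightarrow$(B), assuming compatibility so that $\zbar=\xbar\wedge\ybar$ exists, I would use the explicit formula for $z$ in \cref{L:Nabla_technical0} to compute $z\setminus x=y\setminus x$ and $z\setminus y=x\setminus y$ \emph{exactly} (not merely mod finite), so the hypotheses give $z\setminus x >^* f_x$ and $z\setminus y >^* f_y$ at once, and together with $\zbar\preceq\xbar$, $\zbar\preceq\ybar$ this places $\zbar$ in the intersection.

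The \textbf{main obstacle} — really the only place demanding care — is the index-set accounting in (A)$\Rightarrow$(C): the set $\mathop{dom}(y\setminus x)$ is \emph{not} contained in $\mathop{dom} y\setminus\mathop{dom} x$, and one must invoke compatibility precisely, via \cref{L:Nabla_technical0}, to confine the overlap contributions to a finite set before transferring the growth estimate through $\wbar$. Everywhere else the argument is a direct, if fiddly, translation of the definitions.
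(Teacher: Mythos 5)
Your proof is correct and takes essentially the same route as the paper's: both are pure definition-chasing through $\preceq$, $>^*$ and the explicit meet from \cref{L:Nabla_technical0}, and your (C)$\Rightarrow$(B) step via the exact identities $z \setminus x = y \setminus x$ and $z \setminus y = x \setminus y$ is precisely what the paper summarizes as ``the second equivalence follows from the definitions''. The only cosmetic difference is in routing: the paper gets the first equivalence of (ii) by applying (i)(b) to push an arbitrary point of the intersection up to $\xbar \wedge \ybar$, whereas you re-derive that transfer directly inside your (A)$\Rightarrow$(C) step instead of reusing (i)(b).
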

\begin{proof}
Claim (i) (a) is evident. Towards (i) (b), suppose $\ybar \preceq \zbar$. Then for all but finitely many $n$ in $\mathop{dom} (z \setminus x) \subseteq \mathop{dom} z$ we have $(z \setminus x)(n)=z(n) = y(n)=(y \setminus x)(n)$. Hence if $y \setminus x >^*f_x$ then also $z \setminus x >^* f_x$. And (i) (b) follows.

For the first equivalence of (ii), note that if $\zbar$ is in $N(\xbar,f_x) \cap N(\ybar,f_y)$ then $\zbar$ is $\preceq$-below both $\xbar$ and $\ybar$. Now apply (i) (b). The second equivalence follows from the definitions.
\end{proof}

If $A$ is any subset of $\omega^{\subseteq \omega}$, write  
$\nabla(A)$ for the subspace $\{ \xbar : x\in A \}$ of $\nabla (\om+1)^\om$, set  $\nabla^*(A) = \nabla (A \cap \omega^{\subset \omega})$ and set  $\nabla^+(A) = \nabla ((A \cup \om^\om \cup \{\emptyset\})^*)$. 
Then $\nabla^*(\omega^{\subseteq \omega})=\nabla(\omega^{\subset \omega})$ is  $\nabla^*$ from above, and abbreviate $\nabla(\omega^{\subseteq \omega})= \nabla^+(\omega^{\subset \omega})= \nabla (\om+1)^\om$ to $\nabla$.

\begin{thr}\label{MN_technical} 
Let $A$ be a subset of $\omega^{\subseteq \omega}$. Then the following are equivalent:
(1) $\Delta(A)$ holds, (2) $\nabla^*(A)$ is halvable in $\nabla$,  and (3) $\nabla^+(A)$ is monotonically normal in $\nabla$.
\end{thr}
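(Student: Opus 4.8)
The plan is to prove the cycle of implications $(1) \implies (3) \implies (2) \implies (1)$, using \cref{LemmaMNequivalentToHalvable} to pass between halvability and monotone normality, and the flexibility in the definition of $\Delta$ afforded by \cref{L:EqvtDeltas} to move between $A$, $A \cap \om^{\subset \om}$, and the saturated set $(A \cup \om^\om \cup \{\emptyset\})^*$.

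\emph{For $(1) \implies (3)$.} First I would note that by \cref{L:EqvtDeltas}(2), $\Delta(A)$ is equivalent to $\Delta((A \cup \om^\om \cup \{\emptyset\})^*)$, so I may assume the witnessing function $F$ is defined on the whole set $B := (A \cup \om^\om \cup \{\emptyset\})^*$ whose $\nabla$-image is exactly $\nabla^+(A)$. To show $\nabla^+(A)$ is monotonically normal in $\nabla$, I would invoke \cref{LemmaMNequivalentToHalvable}: the partial order $\preceq$ on $\nabla$ together with the bases $N(\xbar,h)$ satisfies hypotheses (a) and (b) of that lemma, precisely by \cref{L:Nabla_technical}(i)(a),(b) (the set $\downarrow \xbar \supseteq N(\xbar,c_0)$ is a neighborhood, and intervals sit inside basic neighborhoods). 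Hence it suffices to halve the single neighbornet $T(\xbar) = \downarrow \xbar$ on $\nabla^+(A)$. The halving assignment I would try is $S(\xbar) := N(\xbar, F(x))$. To verify it halves $T$, suppose $S(\xbar) \cap S(\ybar) \neq \emptyset$. By \cref{L:Nabla_technical}(ii) this means $\xbar,\ybar$ are compatible with $y \setminus x >^* F(x)$ and $x \setminus y >^* F(y)$. I must conclude $\xbar \in \downarrow \ybar$ or $\ybar \in \downarrow \xbar$, i.e. $\xbar \preceq \ybar$ or $\ybar \preceq \xbar$. If neither holds, then by \cref{L:Nabla_technical}(iii) both $x \setminus y$ and $y \setminus x$ are infinite; combined with compatibility (agreement off a finite set on $\mathop{dom} x \cap \mathop{dom} y$), this is exactly the statement that $x$ and $y$ \emph{switch} — contradicting $\Delta(B)$, which forbids $x \setminus y >^* F(y)$ and $y \setminus x >^* F(x)$ simultaneously for switching pairs.

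\emph{For $(3) \implies (2)$.} This is the easy direction: $\nabla^*(A) \subseteq \nabla^+(A)$ as subspaces of $\nabla$, monotone normality in $\nabla$ restricts to subspaces, and \emph{any} monotonically normal subspace is halvable in its ambient space (as observed just before \cref{LemmaMNequivalentToHalvable}); restricting the monotone normality operator to the neighbornet $T(\xbar)=\downarrow \xbar$ on $\nabla^*(A)$ yields a halving.

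\emph{For $(2) \implies (1)$.} Suppose $S$ halves $T(\xbar)=\downarrow\xbar$ on $\nabla^*(A) = \nabla(A \cap \om^{\subset \om})$, where (as allowed) each $S(\xbar)$ is basic, say $S(\xbar)=N(\xbar, G(x))$ for some $G(x) \in \baire$. I would define $F := G$ on $A \cap \om^{\subset \om}$ and claim it witnesses $\Delta(A \cap \om^{\subset \om})$, which by \cref{L:EqvtDeltas}(2) gives $\Delta(A)$. Take $x,y \in A \cap \om^{\subset \om}$ that switch; then they are compatible with $x\setminus y, y\setminus x$ both infinite, so $\xbar \npreceq \ybar$ and $\ybar \npreceq \xbar$ by \cref{L:Nabla_technical}(iii), i.e. $\xbar \notin T(\ybar)$ and $\ybar \notin T(\xbar)$. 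Since $S$ halves $T$, this forces $S(\xbar) \cap S(\ybar) = \emptyset$. By \cref{L:Nabla_technical}(ii), the negation of $S(\xbar)\cap S(\ybar)\neq\emptyset$ for the compatible pair $\xbar,\ybar$ yields $y \setminus x \ngtr^* G(x)$ or $x \setminus y \ngtr^* G(y)$, which is exactly the conclusion required in \cref{delta}.

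\emph{Main obstacle.} The one genuinely delicate point is ensuring the domains behave so that ``compatible, not $\preceq$-comparable in either direction'' coincides with ``switch'', and that the restriction to $\om^{\subset \om}$ in $\nabla^*$ does not lose information — this is why switching is defined using infinite-and-co-infinite domains and why \cref{L:Nabla_technical}(iii) and \cref{L:EqvtDeltas}(2) must be deployed in tandem. Everything else is bookkeeping: translating $>^*$ conditions on partial functions into the basic-neighborhood meeting criterion of \cref{L:Nabla_technical}(ii), and checking the order-theoretic hypotheses of \cref{LemmaMNequivalentToHalvable} via \cref{L:Nabla_technical}(i).
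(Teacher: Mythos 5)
Your proof is correct and takes essentially the same route as the paper, which likewise reduces all three statements, via \cref{LemmaMNequivalentToHalvable} (with its hypotheses verified through \cref{L:Nabla_technical}(i)) and \cref{L:EqvtDeltas}, to the single equivalence ``$\Delta(A)$ holds iff the neighbornet $T(\xbar)=\;\downarrow \xbar$ is halvable in $\nabla$'', using exactly your dictionary $F \leftrightarrow S(\xbar)=N(\xbar,F(x))$ together with \cref{L:Nabla_technical0}(iii) and \cref{L:Nabla_technical}(ii); your three-implication cycle is only a different packaging of that one reduction. The sole blemish is cosmetic: your three citations of ``\cref{L:Nabla_technical}(iii)'' should read \cref{L:Nabla_technical0}(iii), since \cref{L:Nabla_technical} has no part (iii).
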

\begin{proof} 
From \cref{L:Nabla_technical}(i) we see that $\nabla$ with $\preceq$ and the standard basic neighborhoods satisfies conditions (a) and (b) of \cref{LemmaMNequivalentToHalvable}, and any subspace of $\nabla$ is monotonically normal in $\nabla$ if and only a specific neighbornet is halvable. 
Combining this with \cref{L:EqvtDeltas} we see that to prove the equivalence of (1) through (3) it is sufficient to show: $\Delta (A)$ holds if and only if the neighbornet $T(\xbar)=\downarrow \xbar$ for $x$ in $A$ is halvable in $\nabla$.

Suppose $F$ is a function from $A$ into $\baire$. Define the neighbornet $S$ of $\nabla (A)$ in $\nabla$ by $S(\xbar) = N(\xbar,f_x)$ where $f_x=F(x)$.
On the other hand, suppose $S$ is a  neighbornet of $\nabla (A)$ in $\nabla$. We may assume each $S(\xbar)$ is basic, say $S(\xbar) = N(\xbar,f_x)$. Define $F : A \to \baire$ by $F(x) = f_x$. We show $F$ witnesses $\Delta (A)$ if and only $S$ halves in $\nabla$ the neighbornet $T(\xbar)=\downarrow \xbar$ for $\xbar$ in $A$.

First let us note, `$x$ and $y$ in $A$ switch', reinterpreted in terms of $\xbar$ and $\ybar$ via \cref{L:Nabla_technical0}(iii), is equivalent to, `$\xbar, \ybar$ are compatible, but $\ybar \npreceq \xbar$ and $\xbar \npreceq  \ybar$'. 
Next, taking the contrapositive, `$S(\xbar)=N(\xbar,F(x))$ halves $T(\xbar)=\downarrow \xbar$ in $\nabla$' is equivalent to, `$\ybar \npreceq \xbar$ and $\xbar \npreceq  \ybar$ implies $N(\xbar,F(x)) \cap N(\ybar,F(y)) = \emptyset$'. 

Now applying \cref{L:Nabla_technical0}(iii) and \cref{L:Nabla_technical}(ii), we see that `$S(\xbar)=N(\xbar,F(x))$ halves $T(\xbar)=\downarrow \xbar$ in $\nabla$' is equivalent to, `($\xbar, \ybar$ not compatible) or ($\xbar, \ybar$  compatible and $x \setminus y$ infinite and $y \setminus x$ infinite) implies ($\xbar, \ybar$ not compatible) or ($x \setminus y \ngtr^* F(y)$ or $y \setminus x \ngtr^* F(x)$)', which is equivalent to, `if ($\xbar, \ybar$  compatible and $x \setminus y$ infinite and $y \setminus x$ infinite) then ($x \setminus y \ngtr^* F(y)$ or $y \setminus x \ngtr^* F(x)$)', which (by the reinterpretation of switching above) is equivalent to `$F$ witnesses $\Delta (A)$'.
\end{proof}

\begin{thr}\label{MN}
Let $A$ be a subset of $\om^{\subseteq \om}$.

(1) If $\Delta (A)$ then $\nabla (A)$ is monotonically normal and hereditarily paracompact.

(2) If $\nabla (A)$ is monotonically normal and, whenever $\xbar, \ybar$ in $\nabla(A)$ are compatible then $\xbar \wedge \ybar$ is  in $\nabla(A)$,  then $\Delta (A)$ holds.
\end{thr}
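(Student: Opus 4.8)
The plan is to derive Theorem~\ref{MN} almost entirely from the machinery already assembled in Theorem~\ref{MN_technical} together with the embedding results of Section~\ref{Stationary}. The key observation is that for part~(1) I do \emph{not} directly prove monotone normality of $\nabla(A)$; instead I bootstrap through the larger space $\nabla^+(A)$. First I would note that $\nabla(A) \subseteq \nabla^+(A)$ (since $A \subseteq (A \cup \om^\om \cup \{\emptyset\})^*$, because each $x \in A$ is almost equal to itself). Then, assuming $\Delta(A)$, Theorem~\ref{MN_technical} gives that $\nabla^+(A)$ is monotonically normal in $\nabla$. Since monotone normality in $\nabla$ is inherited by subspaces (the operator $G(\cdot,\cdot)$ restricts), $\nabla(A)$ is monotonically normal in $\nabla$, hence monotonically normal. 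For hereditary paracompactness I would invoke Theorem~\ref{ThrNablaMNimpliesHP} directly: each coordinate space $\om+1$ is (compact) metrizable, so $\nabla(A)$ is a monotonically normal subspace of a countable nabla product of metrizable spaces, and therefore hereditarily paracompact.

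For part~(2) the strategy is to reverse the chain, using the extra closure hypothesis to identify $\nabla(A)$ with a space to which Theorem~\ref{MN_technical} applies. The cleanest route is to show that $\Delta(A)$ follows once I know $\nabla^*(A) = \nabla(A \cap \om^{\subset\om})$ is halvable in $\nabla$, via the equivalence (1)$\iff$(2) of Theorem~\ref{MN_technical}. So the task reduces to: from monotone normality of $\nabla(A)$ (plus the meet-closure hypothesis), extract halvability of $\nabla^*(A)$ in $\nabla$. Here I would exploit that $\nabla^*(A)$ is a subspace of $\nabla(A)$, so its restricted monotone normality operator is available. Concretely, given the canonical neighbornet $T(\xbar) = \; \downarrow\xbar$ on $\nabla^*(A)$, monotone normality of $\nabla(A)$ supplies a halving neighbornet whose witnessing sets $S(\xbar)$ I need to be basic neighborhoods \emph{in $\nabla$}; since $\nabla^*(A) \subseteq \nabla(A)$ and we work with the subspace topology, and since basic neighborhoods of $\nabla$ restrict to basic neighborhoods of the subspace, this should go through.

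The role of the meet-closure hypothesis is the main point requiring care, and I expect it to be the principal obstacle. In Theorem~\ref{MN_technical} the halving condition is tested against \emph{switching} pairs, and the key simplification there (via Lemma~\ref{L:Nabla_technical}(ii)) is that two basic neighborhoods $N(\xbar,f_x)$ and $N(\ybar,f_y)$ meet exactly when their witness $\xbar \wedge \ybar$ lies in both. When working inside the subspace $\nabla(A)$ rather than the ambient $\nabla$, this greatest lower bound $\xbar \wedge \ybar$ must actually be a point of the space for the intersection-detection argument to function, and that is precisely what the hypothesis ``$\xbar,\ybar$ compatible $\implies \xbar \wedge \ybar \in \nabla(A)$'' guarantees. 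So I would structure part~(2) around verifying that, under this hypothesis, halvability of $T$ \emph{relative to the subspace} $\nabla(A)$ coincides with halvability \emph{in $\nabla$}, which lets me feed the situation into Theorem~\ref{MN_technical}. The delicate step is ensuring that the halving sets produced by monotone normality of $\nabla(A)$ can be taken with witnesses lying in $\nabla(A)$ rather than merely in $\nabla$; the meet-closure hypothesis is exactly the lever that closes this gap, and I would make sure to apply Lemma~\ref{L:Nabla_technical}(ii) to the meet point $\xbar \wedge \ybar$ \emph{as an element of $\nabla(A)$} to translate back to the combinatorial statement $\Delta(A)$.
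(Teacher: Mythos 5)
Your proposal is correct and follows essentially the same route as the paper: part~(1) bootstraps through $\nabla^+(A)$ via \cref{MN_technical}, uses heredity of monotone normality, and invokes \cref{ThrNablaMNimpliesHP} for hereditary paracompactness, exactly as the paper does (the paper leaves the last step implicit). For part~(2) you correctly identify the meet-closure hypothesis as the lever that, via \cref{L:Nabla_technical}(ii), makes relative and ambient intersections of basic neighborhoods coincide, which is precisely the paper's argument for upgrading monotone normality of $\nabla(A)$ to monotone normality \emph{in} $\nabla$ before applying \cref{MN_technical}; your phrasing through halvability rather than monotone normality in $\nabla$ is only a superficial difference, as both reduce to halving the single canonical neighbornet via \cref{LemmaMNequivalentToHalvable}.
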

\begin{proof}
For (1) note that if $\Delta (A)$ holds, then by the preceding theorem $\nabla^+ (A)$ is monotonically normal, so its subspace $\nabla (A)$ is monotonically normal.

For (2) assume $\nabla (A)$ is closed under $\wedge$. By \cref{L:Nabla_technical}(iii), for any $\xbar$ and $\ybar$ in $\nabla(A)$ we have that one basic open set in $\nabla (A)$, say $N_{\nabla(A)} (\xbar,f) = N(\xbar,f) \cap \nabla (A)$,  meets another, say $N_{\nabla (A)} (\ybar,g)$, if and only if they both contain $\xbar \wedge \ybar$; and so they meet (in $\nabla (A)$) if and only if the corresponding open sets in $\nabla$, $N(\xbar,f)$ and $N(\ybar,h)$, meet (in $\nabla$). 
Hence if $\nabla (A)$ is monotonically normal then it is monotonically normal in $\nabla$, and thus, by the preceding theorem, $\Delta (A)$ holds.
\end{proof}

From \cref{P:DeltaFI} we deduce:
\begin{exa}\label{Pr:ZFC_Delta}
Let  $\mathrm{FI}$ be the family of finite disjoint unions of increasing partial functions.  Then, in $\mathsf{ZFC}$, we have  $\nabla (FI)$ is monotonically normal, and hereditarily paracompact. 
\end{exa}
A space may be monotonically normal for `trivial' reasons, such as being discrete or metrizable.
Indeed, it is not difficult to check that $\mathrm{INC}$ is a closed and discrete subspace of $\nabla$, and so monotonically normal `trivially'. 
However this is not the case for $\mathrm{FI}$.
For $x$ in $\mathrm{FI}$ let $\mathop{ht} (x)$ be the minimal number of partial functions in a representation of $x$ as a disjoint union of increasing partial functions, and set $\mathrm{FI}_n = \{ x : \mathop{ht}(x)=n\}$. 
Then $\mathrm{FI}_1 = \mathrm{INC}$, the increasing partial functions. 
One can verify that the closure of $\mathrm{FI}_2$ contains $\mathrm{FI}_1$. From \cref{tightness} it follows that every point of $\mathrm{FI}_2$ has uncountable character in $\mathrm{FI}$. Hence $\mathrm{FI}$ is far from being metrizable (or discrete).

\subsection{Another Not Hereditarily Normal Space}

Denote by \emph{$X(\baire, \leq^*)$} the subspace $\baire \cup \{ c_\om \}$ of $\nablaomega$ and write $N(c_\om , f)_X = N(c_\om, f)\cap X(\baire, \leq^*)$, with $f\in \baire$, the neighborhoods around $c_\om$ in $X(\baire, \leq^*)$.

\begin{thr}\label{ThrL(omega_1)xBaireNotHN}
The space $L(\om_1) \times X(\baire, \leq^*)$ is hereditarily normal if and only if $\mathfrak{b}= \om_1$. 
\end{thr}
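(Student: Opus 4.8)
The plan is to read the product through its two factors. In $L(\om_1)$ write $\om_1$ for the non-isolated point, with neighborhood base $\{U_\alpha:\alpha<\om_1\}$, where $U_\alpha=\{\om_1\}\cup[\alpha,\om_1)$; in $X(\baire,\leq^*)$ each class $\overline{g}$ of a total $g\in\baire$ is isolated, while the basic neighborhoods of $c_\om$ are $N(c_\om,f)=\{c_\om\}\cup\{\overline{g}:g>^*f\}$. Hence the only non-isolated points of $L(\om_1)\times X(\baire,\leq^*)$ lie on the top row $\{\om_1\}\times X$, the right column $L\times\{c_\om\}$, and their common corner $(\om_1,c_\om)$, and the interaction between a row and a column is governed by $\leq^*$-domination; this is exactly where $\mathfrak{b}$ enters.

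For the forward implication I would prove the contrapositive: if $\mathfrak{b}>\om_1$ then the open subspace $W=\big(L(\om_1)\times X(\baire,\leq^*)\big)\setminus\{(\om_1,c_\om)\}$ is not normal. In $W$ the top row minus corner, $H=\{\om_1\}\times(X\setminus\{c_\om\})$, and the right column minus corner, $K=(L\setminus\{\om_1\})\times\{c_\om\}$, are disjoint and closed, since the only possible further limit point of either is the deleted corner. Any open $U\supseteq H$ assigns to each $\overline{g}$ an $\alpha_g<\om_1$ with $U_{\alpha_g}\times\{\overline{g}\}\sub U$, and any open $V\supseteq K$ assigns to each $\xi<\om_1$ an $f_\xi\in\baire$ with $\{\xi\}\times N(c_\om,f_\xi)\sub V$. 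As $\mathfrak{b}>\om_1$, the family $\{f_\xi:\xi<\om_1\}$ is $\leq^*$-bounded, say by $h$; putting $g=h+1$ gives $g>^*f_\xi$ for every $\xi$, so the isolated point $(\alpha_g,\overline{g})$ lies in $U$ (as $\alpha_g\in U_{\alpha_g}$) and in $V$ (as $g>^*f_{\alpha_g}$). Thus $U\cap V\neq\emptyset$, $W$ is not normal, and the product is not hereditarily normal.

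For the reverse implication I would use the standard fact that a space is hereditarily normal if and only if every open subspace is normal, which lets me exploit the fatness of open sets around the non-isolated points. Assume $\mathfrak{b}=\om_1$ and fix a $\leq^*$-increasing unbounded family $\{f_\zeta:\zeta<\om_1\}$. Its key consequence is that for each $g\in\baire$ the set $\{\zeta:g>^*f_\zeta\}$ is a proper, hence countable, initial segment of $\om_1$, so there is $\gamma_g<\om_1$ with $g>^*f_\zeta\Rightarrow\zeta<\gamma_g$. Now let $O$ be open and $A,B\sub O$ disjoint and relatively closed. Each non-isolated point of $A$ has, by closedness of $B$ together with openness of $O$, a basic neighborhood inside $O$ missing $B$ (and symmetrically for $B$); isolated points of $A$ go into $U$ as singletons. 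I would build $U\supseteq A$ and $V\supseteq B$ from these neighborhoods after two alignments: at a column point $(\xi,c_\om)\in A$ shrink its neighborhood so that its domination level $\phi_A(\xi)$ dominates $f_\xi$, and at a row point $(\om_1,\overline{g})\in B$ enlarge its ordinal level $\beta_B(g)$ past $\gamma_g$ (still countable, and still large enough to avoid $A$ in row $g$, which by closedness needs only a countable level); then do the symmetric thing with $A$ and $B$ interchanged.

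The verification of $U\cap V=\emptyset$ is then piece by piece. The only dangerous pair is a column neighborhood $\{\xi\}\times N(c_\om,\phi_A(\xi))$ of an $A$-point against a row neighborhood $U_{\beta_B(g)}\times\{\overline{g}\}$ of a $B$-point: a shared isolated point $(\xi,\overline{g})$ would force $\xi\geq\beta_B(g)$ together with $g>^*\phi_A(\xi)\geq f_\xi$, whence $\xi<\gamma_g\leq\beta_B(g)$, a contradiction; the symmetric pair is identical. Every other pair is disjoint for soft reasons: distinct columns or distinct rows are disjoint, every $A$-neighborhood already misses $B$ and conversely (handling the isolated singletons), and a corner neighborhood $U_{\beta_0}\times N(c_\om,\phi_0)$ of, say, an $A$-corner avoids all $B$-pieces because, already missing $B$, the opposite columns lie below $\beta_0$ and the opposite rows are $\leq^*$-bounded by $\phi_0$, so its two walls keep it apart. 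The main obstacle is precisely this reverse direction: making the row and column assignments cohere globally while each still avoids the opposite closed set, and it is exactly the scale furnished by $\mathfrak{b}=\om_1$ that lets the countable ordinal levels $\gamma_g$ absorb the domination levels.
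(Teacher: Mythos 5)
Your proof is correct, and its skeleton matches the paper's: your forward direction is exactly the paper's argument read contrapositively (the paper deletes the corner, separates the two edges of the punctured product, extracts functions $f_\alpha$ and countable sets $C_g$ from the separating open sets, and observes that $\{f_\alpha : \alpha < \om_1\}$ is a $K$-Luzin set, hence $\mathfrak{b}=\om_1$; you instead bound $\{f_\xi : \xi < \om_1\}$ directly from $\mathfrak{b}>\om_1$ and exhibit a point of $U \cap V$ --- the same computation seen from the other side). In the reverse direction you diverge mildly in the combinatorial witness: the paper enumerates a $K$-Luzin set along the index set of $A$ and excises from each row the countable set $C_g=\{\alpha : f_\alpha \le^* g\}$, whereas you fix one global $\le^*$-increasing unbounded family, dominate it at column points (so $\phi_A(\xi) \ge f_\xi$), and use monotonicity to make the bad set $\{\zeta : g >^* f_\zeta\}$ an initial segment bounded by $\gamma_g$, so rows are trimmed by an ordinal threshold $\beta_B(g)\ge\gamma_g$ rather than by deleting a scattered countable set. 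Since a $\le^*$-increasing unbounded $\om_1$-family of distinct functions is itself a $K$-Luzin set in the paper's sense, the two devices are interchangeable; what your version buys is tidier threshold bookkeeping, and you are also more explicit than the paper about the reduction --- working in an arbitrary open subspace, treating the corner point, and checking both orientations ($A$ on the $c_\om$-edge against $B$ on the $\om_1$-edge and vice versa), details the paper compresses into the remark that regularity plus the isolated points reduce everything to $A \subseteq T$, $B \subseteq R$. One phrase to tighten: in your corner case, ``the opposite rows are $\le^*$-bounded by $\phi_0$'' should say that each such $g$ satisfies $g \not>^* \phi_0$ (which is not the same as $g \le^* \phi_0$); that is what ``the corner neighborhood misses $B$'' actually yields, and it is exactly what keeps $\overline{g}$ out of $N(c_\om,\phi_0)$.
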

\begin{proof}
Recall that a subset $L$ of $\om^\om$ ($\baire$ with the product topology) is a \emph{$K$-Luzin} set if it is uncountable and meets every compact of $\baire$ in a countable set, or equivalently, for every $g\in \baire$, the set $\{ f\in L : f\leq^* g\}$ is countable. Observe that any uncountable subspace of $K$-Luzin is $K$-Luzin, hence the existence of a $K$-Luzin set is equivalent to $\mathfrak{b}= \om_1$. 
We prove the equivalence `$L(\om_1) \times X(\baire, \leq^*)$ is hereditarily normal if and only if there is a $K$-Luzin set'.

For the sufficiency, let $p=(\omega_1,c_\om)$ be the top-right corner of the given product. 
In $X' = L(\omega_1) \times X(\om^\om,\le^*) \setminus \{p\}$ the top edge, $T=L(\om_1)\times \{c_\om\} \setminus \{p\}$, and right edge, $R=\{\omega_1\} \times X(\om^\om,\le^*) \setminus \{p\}$, are disjoint closed sets.
Hence, there are disjoint open sets $U$ and $V$ such that $T \subseteq U$ and $R \subseteq V$. For each $\alpha < \omega_1$, pick $f_\alpha$ such that $\{\alpha\} \times N(c_\om,f_\alpha)_X \subseteq U$.
For each $g$ in $\om^\om$ pick countable $C_g \subseteq D(\om_1)$ such that $(L(\om_1)\setminus C_g) \times \{g\} \subseteq  V$.

Let $A=\{f_\alpha : \alpha < \om_1\}$. The choice of the $f_\alpha$'s can be in such way so they are all distinct, so the enumeration of $A$ is injective. We check that $A$ is $K$-Luzin. 
Take any $g$ in $\om^\om$, then for any $\alpha$ not in $C_g$, as $U$ and $V$ are disjoint, $(\alpha,g)$ is not in $\{\alpha\} \times N(c_\om,f_\alpha)_X$, so $f_\alpha \nleq^* g$. Hence, $\{ \alpha \in \om_1 : f_\alpha \le^* g\}$ is contained in $C_g$, and so  is countable.

For the converse, note that $ L(\omega_1) \times X(\om^\om,\le^*)$ is regular and points in $(L(\om_1) \setminus \{ \om_1 \}) \times \baire$ are isolated, and thus this product is hereditarily normal provided: whenever $A \sub T$, $B \sub R$ (where $T$ and $R$ are as above), then there are sets $U, V$ open in $L(\omega_1) \times X(\om^\om,\le^*)$ separating $A$ and $B$.

We show this latter condition holds if there is a $K$-Luzin set. 
Write $A = \{ (\alpha , c_\om) : \alpha \in S \}$, where $S\sub \om_1$. If $A$ is countable, then the result is clear. Hence, suppose $S$ is uncountable. Let $L = \{ f_\alpha : \alpha \in S \} \sub \baire$ be a $K$-Luzin set such that the enumeration is bijective. For every $g\in \baire$, $C_g = \{ \alpha \in S : f_\alpha \leq ^* g\}$ is countable. Hence the open sets $U = \bigcup_{\alpha \in S} \{ \alpha \} \times N(c_\om, f_\alpha)_X$ and $V = \bigcup_{(g, \om_1)\in B} (L(\om_1) \setminus C_g) \times \{ g \}$ separate $A$ and $B$.
\end{proof}

\section{Nabla Products of \texorpdfstring{$A(\kappa)$}{A(kappa)}'s}\label{nabla_of_fort}

\subsection{\texorpdfstring{$\Delta$}{Delta}-like Characterizations of Monotone Normality}
Denote by $D(\kappa)^{\subseteq \om}$ the set of partial functions from $\om$ to $D(\kappa)$,  and $D(\kappa)^{\subset \om}$ for the subset of partial functions with infinite and co-infinite domain. Two elements $x, y \in D(\kappa)^{\subseteq \om}$ \emph{switch}, if $|x\setminus y| = |y\setminus x| = \om$,  and $|\{ n\in \om : x(n), y(n) \in D(\kappa)$ and $x(n) \neq y(n) \}|< \om$. 

\begin{de}
\emph{$\Delta (A(\kappa))$} is the statement: there is $F : D(\kappa)^{\subset \om} \to ([\kappa]^{< \om})^\om$ such that if $x,y \in D(\kappa)^{\subset \om}$ switch, then $(x\setminus y )(n) \in F(y)(n)$ or $(y\setminus x)(n) \in F(x)(n)$ for infinitely many $n\in \om$.
\end{de}

Let $\nabla^* A(\kappa) = \{ \xbar \in \nabla A(\kappa)^\om : x \in D(\kappa)^{\subset \om} \}$. 
For $x$ and $y$ in $D(\kappa)^{\subseteq \omega}$ write $\ybar \preceq \xbar$ if and only if for all but finitely many $n$ in $\mathop{dom} x$ we have $y(n)=x(n)$. 
A basic neighborhood of an $\xbar$ in $\nabla A(\kappa)^\om$ is $N(\xbar, f) = \{ \ybar \in \nabla A(\kappa)^\om : \ybar \preceq \xbar$ and for all but finitely many $n \in \mathop{dom} (y \setminus x)$ we have $(y \setminus x)(n) \notin f(n)\}$, where $f$ is in $([\kappa]^{<\om})^\om$. 

Observe that $\om+1$ is $A(\aleph_0)$ and that all definitions here reduce in the case $\kappa=\aleph_0$ to those in \cref{S:Delta_and_Nabla}. 
The natural analogues of \cref{L:Nabla_technical0} and \cref{L:Nabla_technical} hold. 
Their proofs, and that of the following theorem follow, \textsl{mutatis mutandis}, those for $\Delta (\om^{\subset \om})$ and $\nablaomega$ in \cref{S:Delta_and_Nabla}, and so are omitted.

\begin{thr}\label{CoroA(kappa)MNiffDelta(A(kappa)}
$\Delta (A(\kappa))$ holds if and only if $\nabla A(\kappa)^\om$ is monotonically normal if and only if $\nabla^* A(\kappa)$ is monotonically normal if and only if $\nabla A(\kappa)^\om$ is halvable.
\end{thr}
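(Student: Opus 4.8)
The plan is to obtain \cref{CoroA(kappa)MNiffDelta(A(kappa)} by faithfully transporting the entire machinery of \cref{S:Delta_and_Nabla} from the base case $\kappa=\aleph_0$ to general $\kappa$, exactly as the surrounding text instructs (``\textsl{mutatis mutandis}''). Since $\om+1 = A(\aleph_0)$ and all the definitions specialize correctly, the strategy is to establish the analogues of \cref{L:Nabla_technical0}, \cref{L:Nabla_technical}, \cref{LemmaMNequivalentToHalvable} and \cref{MN_technical} in the $D(\kappa)$-valued setting, and then chain the resulting equivalences. First I would verify the analogue of \cref{L:Nabla_technical0}: for $x,y$ in $D(\kappa)^{\subseteq \om}$, that $\xbar,\ybar$ are compatible (have a common $\preceq$-lower bound) if and only if $x(n)=y(n)$ for all but finitely many $n \in \mathop{dom} x \cap \mathop{dom} y$ (noting that now two \emph{distinct} values in $D(\kappa)$ are simply unequal, with no metric notion to worry about); that compatible pairs have a greatest lower bound $\xbar \wedge \ybar$ described by the same union formula; and that $\ybar \npreceq \xbar$ iff they are incompatible or $x \setminus y$ is infinite.

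Next I would record the analogue of \cref{L:Nabla_technical} for the basic neighborhoods $N(\xbar,f)$ with $f \in ([\kappa]^{<\om})^\om$: that $\xbar \in N(\xbar,c_\emptyset) \subseteq \;\downarrow\! \xbar$ (here $c_\emptyset$ is the constantly-$\emptyset$ map playing the role of $c_0$), that $N(\cdot,\cdot)$ is closed downward on intervals, and crucially the meeting criterion: $N(\xbar,f_x) \cap N(\ybar,f_y) \ne \emptyset$ iff $\xbar \wedge \ybar$ lies in both, iff $\xbar,\ybar$ are compatible and $(y \setminus x)(n) \notin f_x(n)$ for all but finitely many $n$ and $(x \setminus y)(n) \notin f_y(n)$ for all but finitely many $n$. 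With these in hand, \cref{LemmaMNequivalentToHalvable} applies verbatim to $\nabla A(\kappa)^\om$ with the order $\preceq$, giving that a subspace is monotonically normal in $\nabla A(\kappa)^\om$ iff the neighbornet $T(\xbar)=\;\downarrow\!\xbar$ is halvable. The remaining work is the bookkeeping of \cref{MN_technical}: reinterpreting ``$x,y$ switch'' via the order-theoretic statement ``compatible but $\preceq$-incomparable'', taking contrapositives, and matching the halving condition against $\Delta(A(\kappa))$. The one genuine difference from \cref{S:Delta_and_Nabla} is that the negation of the switching/domination clause is now phrased with ``for infinitely many $n$'' against membership in $F(\cdot)(n) \in [\kappa]^{<\om}$ rather than $>^*$ against a function in $\baire$; I would check that the Boolean manipulation of negations still lines up, since $(y\setminus x) \ngtr^* F(x)$ becomes precisely ``$(y\setminus x)(n) \in F(x)(n)$ for infinitely many $n$''.

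Finally I would close the loop of equivalences: $\Delta(A(\kappa))$ iff $\nabla^* A(\kappa)$ is halvable in $\nabla A(\kappa)^\om$ (the analogue of \cref{MN_technical}), which together with the $\kappa$-version of \cref{MN} gives monotone normality of $\nabla^* A(\kappa)$ and of the full $\nabla A(\kappa)^\om$; and halvability of $\nabla A(\kappa)^\om$ follows from its monotone normality while the reverse implication comes from \cref{LemmaMNequivalentToHalvable} applied to the whole space, since $\nabla A(\kappa)^\om$ is closed under $\wedge$ on compatible pairs. I expect no deep obstacle, as the argument is structural rather than quantitative; the main point requiring care is confirming that the proof of \cref{eventually}-style rigidity is \emph{not} needed here (that belonged to metrizable targets), so that the equivalences genuinely hold in $\mathsf{ZFC}$ for every $\kappa$. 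The most delicate verification is the meeting criterion for $N(\xbar,f_x) \cap N(\ybar,f_y)$, since the ``for all but finitely many $n \in \mathop{dom}(y\setminus x)$'' condition in the definition of $N(\xbar,f)$ must interact correctly with the coordinatewise finite-set values of $f$; getting this right is what makes the translation of the meeting lemma, and hence the whole chain, go through.
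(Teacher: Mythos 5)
Most of your proposal tracks the paper's own (omitted) argument: the paper literally says the analogues of \cref{L:Nabla_technical0} and \cref{L:Nabla_technical} hold and that the theorem follows \textsl{mutatis mutandis} from \cref{S:Delta_and_Nabla}, and your translation is sound where it goes -- including the two points you rightly flag, that $(y\setminus x)\ngtr^* F(x)$ becomes ``$(y\setminus x)(n)\in F(x)(n)$ for infinitely many $n$'' and that the meeting criterion for $N(\xbar,f_x)\cap N(\ybar,f_y)$ must be rechecked against the finite-set-valued $f$'s. However, there is a genuine gap at the clause ``$\nabla^* A(\kappa)$ is monotonically normal''. Your loop derives $\nabla^* A(\kappa)$ MN \emph{from} $\Delta(A(\kappa))$ (via the $\kappa$-version of \cref{MN}(1) and heredity), but it never proves that $\nabla^* A(\kappa)$ MN implies any of the other three conditions, so the four-way equivalence does not close. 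The tool you invoke, the $\kappa$-version of \cref{MN}, cannot supply this: part (2) requires $\nabla(A)$ to be closed under $\wedge$ of compatible pairs, and $D(\kappa)^{\subset\om}$ is \emph{not} meet-closed. Take $x$ supported on the evens and $y$ on the odds: both lie in $D(\kappa)^{\subset\om}$, they are compatible (vacuously) and they switch, yet $\xbar\wedge\ybar$ is the class of a total function, which is excluded from $\nabla^* A(\kappa)$. Worse, every common $\preceq$-lower bound of $\xbar$ and $\ybar$ has cofinite domain, so the traces on $\nabla^* A(\kappa)$ of \emph{any} two basic neighborhoods of $\xbar$ and $\ybar$ are disjoint; an MN operator on $\nabla^* A(\kappa)$ therefore carries no information whatsoever about such pairs, while $\Delta(A(\kappa))$ genuinely constrains them. (Your remark that ``$\nabla A(\kappa)^\om$ is closed under $\wedge$'' is true but only rescues the halvability clause for the whole space, not the $\nabla^*$ clause.)

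The implication ``$\nabla^* A(\kappa)$ MN $\implies \Delta(A(\kappa))$'' is true, but needs an idea beyond the verbatim transport of \cref{MN_technical} and \cref{MN}. One repair: $\nabla A(\kappa)^\om$ embeds into $\nabla^* A(\kappa)$ via interleaving -- send $x$ to $x'$ with $x'(3n)=x(n)$, $x'(3n+1)$ a fixed isolated point, and $x'(3n+2)=\kappa$; every point of the image has infinite, co-infinite domain, and the image (a nabla product of subspaces with singleton coordinates suppressed) is homeomorphic to $\nabla A(\kappa)^\om$. Heredity of monotone normality then yields $\nabla^* A(\kappa)$ MN $\implies \nabla A(\kappa)^\om$ MN, closing the loop. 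Alternatively, one can argue combinatorially: switching pairs $x,y$ with $\mathop{dom} x \cup \mathop{dom} y$ co-infinite have their meet inside $\nabla^* A(\kappa)$, so monotone normality of $\nabla^* A(\kappa)$ yields $\Delta(A(\kappa))$ restricted to such pairs, and a spreading map (copying partial functions onto the even coordinates) pulls the restricted principle back to the full one. Either way, some such supplementary step must be stated; since the paper omits its proof, this is precisely the point a complete write-up has to address, and your proposal as written would fail there.
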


When can we deduce from $\Delta(A(\kappa))$ that $\nabla A(\kappa)^\om$ is (hereditarily) paracompact? Note that, we can not simply apply \cref{ThrNablaMNimpliesHP}. 
However, for all $\kappa$ we see that $\nabla A(\kappa)^\om$ is homeomorphic to its square, and the second author \cite{gartside1999monotone} has shown that if the square of a space is monotonically normal then all finite powers are monotonically normal and hereditarily paracompact.

\begin{coro}\label{C:DeltaAkappaimpHP}
If $\Delta(A(\kappa))$ holds then $\nabla A(\kappa)^\om$ is hereditarily paracompact, and $\square A(\kappa)^\om$ is paracompact.
\end{coro}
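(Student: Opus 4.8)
The plan is to assemble the corollary from results already in hand: the characterization of $\Delta(A(\kappa))$ via monotone normality, a self-homeomorphism of $\nabla A(\kappa)^\om$ with its square, the second author's theorem on squares, and Kunen's box--nabla equivalence for compact factors. First, by \cref{CoroA(kappa)MNiffDelta(A(kappa)}, the hypothesis $\Delta(A(\kappa))$ yields that $\nabla A(\kappa)^\om$ is monotonically normal.

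The crux is to show that $\nabla A(\kappa)^\om$ is homeomorphic to its own square. I start from the standard box-product splitting $\square_{k\in I\sqcup J} A(\kappa) \cong \square_{i\in I} A(\kappa) \times \square_{j\in J} A(\kappa)$, given by $z \mapsto (z\rest I, z\rest J)$ and valid because a box over $I\sqcup J$ is precisely the product of a box over $I$ with a box over $J$. I then check this descends to the nabla quotients. At the level of sets the mod-finite relations match: $z$ and $z'$ differ in finitely many coordinates over $I\sqcup J$ exactly when $z\rest I \sim z'\rest I$ and $z\rest J \sim z'\rest J$, so the fibres of the quotient map $q_{I\sqcup J}$ coincide (under the splitting) with those of $q_I\times q_J$. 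For the topology, the key fact is that each quotient map $q\colon \square_n X_n \to \nabla_n X_n$ is \emph{open}: the $\sim$-saturation of a box $\square_n U_n$ is the union, over all finite $F\sub \om$, of the open boxes obtained by replacing $U_n$ with $X_n$ for $n\in F$, and a union of open sets is open. Since $q_I$ and $q_J$ are open, so is $q_I\times q_J$, hence it is a quotient map, and the two quotient topologies agree. Taking $I=J=\om$ and noting $I\sqcup J$ is again countable gives $\nabla A(\kappa)^\om \cong (\nabla A(\kappa)^\om)^2$.

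With the self-homeomorphism in place, monotone normality of $\nabla A(\kappa)^\om$ transfers to its square; by the second author's result \cite{gartside1999monotone} that a space whose square is monotonically normal has all finite powers monotonically normal and hereditarily paracompact, we conclude that $\nabla A(\kappa)^\om$ is itself hereditarily paracompact. Finally, $A(\kappa)$ is compact (it is the one-point compactification of $D(\kappa)$), so Kunen's theorem from the introduction applies: $\square A(\kappa)^\om$ is paracompact if and only if $\nabla A(\kappa)^\om$ is paracompact. The latter holds by the preceding paragraph, which finishes the proof.

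The main obstacle is the descent of the box splitting to the nabla quotient---concretely, showing $q_I\times q_J$ is a quotient map. This is exactly where the openness of the nabla quotient map is indispensable, since products of quotient maps need not be quotient maps in general; everything else is a direct invocation of cited results.
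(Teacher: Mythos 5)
Your proposal is correct and follows exactly the paper's own route: $\Delta(A(\kappa))$ gives monotone normality of $\nabla A(\kappa)^\om$ via \cref{CoroA(kappa)MNiffDelta(A(kappa)}, the homeomorphism of $\nabla A(\kappa)^\om$ with its square combined with the second author's theorem \cite{gartside1999monotone} yields hereditary paracompactness, and Kunen's box--nabla equivalence for compact factors transfers paracompactness to $\square A(\kappa)^\om$. The only difference is that you spell out the folklore verification that the nabla product is homeomorphic to its square (via the index-set splitting and openness of the quotient maps), a fact the paper asserts without proof.
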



\subsection{Not Hereditarily Normal}

Williams' result in \cite{williams1984box} that under $\mathfrak{d}=\om_1$, countable nabla products of compact spaces of weight no more than $\aleph_1$ are $\om_1$-metrizable, and hence monotonically normal, 
implies, in particular, that consistently $\nabla A(\omega_1)^\om$ is monotonically normal. We now see that this last statement is independent, and  $\om_1$ is the largest cardinal such that $\nabla A(\kappa)^\om$ can be monotonically normal.

\begin{thr}[Roitman \cite{roitman2014box}]\label{ThrNablaA(omega_2)NotHN}
$\nabla A(\om_2)^\om$ is not hereditarily normal.
\end{thr}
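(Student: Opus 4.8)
The plan is to exhibit a non-normal subspace of $\nabla A(\om_2)^\om$, using \cref{L(omega2)notHereditarilyNorlmal} as the template. That lemma says $Y = L(\om_2) \times L(\om_2) \setminus \{(\om_2,\om_2)\}$ is not normal, witnessed by the two closed edges $H = (L(\om_2)\setminus\{\om_2\}) \times \{\om_2\}$ and $K = \{\om_2\} \times (L(\om_2)\setminus\{\om_2\})$. Recall $L(\om_2)=L_{\om_1}(\om_2)$ is the one-point Lindel\"ofication of $D(\om_2)$: neighborhoods of the special point $\om_2$ are co-countable. By contrast $A(\om_2)=L_\om(\om_2)$ has co-finite neighborhoods of its point at infinity. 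So the first task is to locate a copy of the Lindel\"ofied point inside the nabla product. This is exactly what the $G_\delta$-modification gives us: in $\nabla A(\om_2)^\om$ a basic neighborhood of a point fixes the coordinate values outside a finite set, but intersecting countably many such neighborhoods (legal, since $\nabla$ products are $P_{\om_1}$) allows us to fix values outside a countable set, which is the co-countable behavior of $L(\om_2)$. Concretely, by \cref{LemGdeltaEmbed} the $G_\delta$-modification $A(\om_2)_\delta$ embeds as a closed subspace via $x \mapsto c_x$. Since $\om_2>\om_1$, in $A(\om_2)_\delta$ the point $\om_2$ has co-countable (not co-finite) neighborhoods, so $A(\om_2)_\delta$ contains a closed copy of $L(\om_2)$.

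Next I would realize the \emph{square} $L(\om_2)\times L(\om_2)$ inside $\nabla A(\om_2)^\om$ by splitting the index set $\om$ into two infinite pieces. Fix a partition $\om = E_0 \sqcup E_1$ with both $E_0,E_1$ infinite. For $a,b \in A(\om_2)$ let $p_{a,b} \in A(\om_2)^\om$ be the point equal to $a$ on $E_0$ and $b$ on $E_1$. The map $(a,b)\mapsto \overline{p_{a,b}}$ should embed the $G_\delta$-modification of $A(\om_2)\times A(\om_2)$, and hence a closed copy of $L(\om_2)\times L(\om_2)$, as a closed subspace of $\nabla A(\om_2)^\om$: a basic nabla neighborhood restricted to $E_0$ and $E_1$ gives co-finite control on each block, and countable intersections promote this to co-countable control, matching the product $L(\om_2)\times L(\om_2)$. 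This is essentially \cref{LemGdeltaEmbed} applied with the two blocks carrying the two factors, and the argument that the image is closed is the same closedness argument as in that lemma.

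Having embedded a closed copy of $L(\om_2)\times L(\om_2)$, I would remove the single point corresponding to $(\om_2,\om_2)$ (that is, the class of the function constantly equal to the point at infinity on both blocks) to obtain a copy of the non-normal space $Y$ from \cref{L(omega2)notHereditarilyNorlmal}. Since $Y$ is not normal and sits as a subspace of $\nabla A(\om_2)^\om$, the ambient space is not hereditarily normal. The main obstacle, and the step deserving the most care, is verifying precisely that the embedding of $L(\om_2)\times L(\om_2)$ is correct: one must check that the nabla-product neighborhood filter at $\overline{p_{a,b}}$, after the $P_{\om_1}$ countable-intersection promotion, reproduces exactly the co-countable neighborhood bases at the two special points of $L(\om_2)$ in each block, and does so \emph{independently} in the two blocks so that the product topology (not merely a coarser diagonal topology) is recovered. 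Once the block decomposition and the $G_\delta/P_{\om_1}$ correspondence are pinned down, the non-normality is inherited verbatim from \cref{L(omega2)notHereditarilyNorlmal} via the elementary-submodel counting argument given there.
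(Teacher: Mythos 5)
Your proposal is correct and takes essentially the same route as the paper: the paper observes that $A(\om_2)_\delta = L(\om_2)$ embeds into $\nabla A(\om_2)^\om$ by \cref{LemGdeltaEmbed}, that $\nabla A(\om_2)^\om$ is homeomorphic to its square, and then applies \cref{L(omega2)notHereditarilyNorlmal}; your partition $\om = E_0 \sqcup E_1$ is precisely the standard explicit proof of that square homeomorphism, so your two-block embedding of $L(\om_2)\times L(\om_2)$ is the same argument unpacked. Your careful verification of the block-wise co-countable neighborhood filters is sound, though note that closedness of the image is not actually needed here, since failure of hereditary normality only requires that the non-normal subspace $Y$ embed as a subspace at all.
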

\begin{proof}
Since $A(\omega_2)_\delta = L(\om_2)$, this latter space embeds into $\nabla A(\om_2)^\om$. Now, as $\nabla A(\om_2)^\om$  is homeomorphic to its square, \cref{L(omega2)notHereditarilyNorlmal} applies.
\end{proof}
Roitman, in \cite{roitman2014box}, asked: 
is $\nabla A(\om_1)^\om$ consistently non hereditarily normal?
\begin{thr}\label{CoroNablaA(omega_1)NotHN}
If $\mathfrak{b}> \om_1$, then $\nabla A(\om_1)^\om$ is not hereditarily normal.
\end{thr}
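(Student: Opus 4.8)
The plan is to exhibit the non-hereditarily-normal space of \cref{ThrL(omega_1)xBaireNotHN} as a subspace of $\nabla A(\om_1)^\om$. Since normality of subspaces is inherited downward (a non-normal subspace of $Y$ is a non-normal subspace of any $X \supseteq Y$), it suffices to embed $L(\om_1) \times X(\baire, \leq^*)$ into $\nabla A(\om_1)^\om$: then, assuming $\mathfrak{b} > \om_1$, \cref{ThrL(omega_1)xBaireNotHN} tells us the product is not hereditarily normal, whence neither is $\nabla A(\om_1)^\om$.

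To build the embedding I would treat the two factors separately and then use that $\nabla A(\om_1)^\om$ is homeomorphic to its square (as noted just before \cref{C:DeltaAkappaimpHP}). For the first factor, observe that the $G_\delta$-modification of $A(\om_1)$ is exactly $L(\om_1)$: taking countable intersections of the cofinite neighborhoods of the point at infinity produces precisely the co-countable neighborhoods defining $L(\om_1)$. Hence \cref{LemGdeltaEmbed} embeds $L(\om_1) = A(\om_1)_\delta$ as a (closed) subspace of $\nabla A(\om_1)^\om$ via the constant map $x \mapsto c_x$. For the second factor, note that $\om+1 = A(\aleph_0)$ embeds as a subspace of $A(\om_1)$ --- send a convergent sequence to a countably infinite subset of $D(\om_1)$ together with the point at infinity $\om_1$ --- and that a coordinatewise family of subspace inclusions $Y_n \subseteq Z_n$ induces a subspace inclusion $\nabla_n Y_n \subseteq \nabla_n Z_n$. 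Consequently $X(\baire, \leq^*)$, being a subspace of $\nabla (\om+1)^\om$, is a subspace of $\nabla A(\om_1)^\om$.

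Putting these together, the product of the two embeddings sends $L(\om_1) \times X(\baire, \leq^*)$ into $\nabla A(\om_1)^\om \times \nabla A(\om_1)^\om$, and composing with the homeomorphism from the square back to $\nabla A(\om_1)^\om$ yields the desired embedding; the conclusion then follows as in the first paragraph. The only points requiring genuine verification --- and hence the main (if modest) obstacle --- are the two topological claims used above: that a coordinatewise subspace inclusion induces a subspace inclusion of the nabla products (i.e.\ the quotient topology on $\nabla_n Y_n$ agrees with the topology it inherits from $\nabla_n Z_n$), and that the product of the two embeddings is itself an embedding. Both reduce to comparing basic open sets and are routine, but should be checked to make the reduction to \cref{ThrL(omega_1)xBaireNotHN} airtight.
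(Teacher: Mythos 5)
Your proposal is correct and is essentially the paper's own proof: the paper likewise notes $L(\om_1)=A(\om_1)_\delta$ embeds via \cref{LemGdeltaEmbed}, that $\nabla(\om+1)^\om$ (hence $X(\baire,\leq^*)$) embeds into $\nabla A(\om_1)^\om$, and then invokes the homeomorphism of $\nabla A(\om_1)^\om$ with its square together with \cref{ThrL(omega_1)xBaireNotHN}. You merely spell out the routine verifications (coordinatewise subspace inclusions inducing nabla-subspace inclusions, and products of embeddings being embeddings) that the paper leaves implicit.
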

\begin{proof}
Since $L(\om_1)=A(\om_1)_\delta$, both spaces $L(\om_1)$ and $\nablaomega$ embed into $\nabla A(\om_1)^\om$, and the latter is homeomorphic to its square. Hence, \cref{ThrL(omega_1)xBaireNotHN} applies.
\end{proof}

\paragraph{Remark} We observe here that a claim of Roitman is incorrect. Theorem~6.1 and Proposition~6.4 in \cite{roitman2011paracompactness} claim: (1)  if $\mathfrak{b}= \mathfrak{d} < \aleph_\om$ and each $X_n$ is compact and has weight $\leq \mathfrak{d}$ then $\nabla_n X_n$ is $\mathfrak{b}$-metrizable (and hence monotonically normal); and
(2) if $\kappa < \mathfrak{b}= \mathfrak{d} < \aleph_\om$ and the nabla product of countably many compact spaces of weight $\kappa$ is $\mathfrak{b}$-metrizable, then the nabla product of countably many compact spaces of weight $\kappa^+$ is $\mathfrak{b}$-metrizable (and hence monotonically normal).
Claim (2) implies claim (1)  by finite induction. 
But both are false. Indeed, the compact spaces $ A(\om_2)$ and $ (\om_2+1)$ have weight $\om_2$, but $\nabla A(\om_2)^\om$ and $\nabla (\om_2 +1)^\om$ are not hereditarily normal as shown in \cref{ThrNablaA(omega_2)NotHN} and \cref{ThrNablaomega2+1NotHN}. Hence, they cannot be $\kappa$-metrizable. In the attempted proof of claim~(2) it is assumed that the nabla product under consideration is $P_{\mathfrak{b}}$, but this is false, in general, when the factors are not first countable.



\section{Nabla Products of  Ordinals}\label{nabla_of_ordinals}

\subsection{\texorpdfstring{$\Delta$}{Delta}-like Characterizations of Monotone Normality}

In this section we uncover a $\Delta$-like combinatorial principle, namely $\Delta(\alpha)$, which characterizes the monotone normality of  a nabla product of  ordinals, $\ordnabla{\alpha}$. 
(For an ordinal $\beta$, write $\mathop{Lim} (\beta)$ for the set of limit ordinals of $\beta$.)

Basic neigborhoods of an $x$ in $\ordnabla{\alpha}$ have the form, $N(x,f) = \{ y : $ for all but finitely many $n$ we have $f(n) \le y(n) \le x(n)$ if $x(n) \in \mathop{Lim}(\alpha)$ and $y(n) = x(n)$ if $x(n)$ isolated$ \}$, where $f$ is in $\alpha^\omega$ and for all but finitely many $n$, if $x(n)$ is a limit then $f(n)<x(n)$. 
Define a partial order $\preceq$ on $\ordnabla{\alpha}$ by saying $y \preceq x$ if for all but finitely many $n$ we have $y(n) \le x(n)$ and if $x(n)$ is isolated then $y(n)=x(n)$. 
Note that $\ordnabla{\alpha}$, the above basic neighborhoods and $\preceq$ satisfy conditions (a) and (b) of \cref{LemmaMNequivalentToHalvable}. 
Hence for $\ordnabla{\alpha}$ to be monotonically normal it suffices to halve the neighbornet $T(x) = \downarrow x = N(x,c_0)$.
Next we state the appropriate notion of `switching' elements in this context and then $\Delta(\alpha)$.
\begin{de}
Let $\alpha$ be any ordinal and $x,y \in \ordnabla{\alpha}$. We say that $x,y$ \emph{switch} if  for infinitely many $n$, $x(n) < y(n) \in \mathop{Lim}(\alpha)$,  for infinitely many $n$, $y(n) < x(n) \in \mathop{Lim}(\alpha)$, and $\{ n\in \om : x(n), y(n)$  are isolated and $x(n) \neq y(n) \}$ is finite.
\end{de}

\begin{de}
\emph{$\Delta(\alpha)$} is the statement: there is $F : \ordnabla{\alpha} \to \alpha^\om$ such that if $x,y\in \ordnabla{\alpha}$ switch, then $y(n) < F(x)(n) < x(n)$ for infinitely many $n$ or $x(n) < F(y)(n) < y(n)$ for infinitely many $n$.
\end{de}

Now we characterize when $\ordnabla{\alpha}$ is monotonically normal.

\begin{prop}\label{PropDelta[alpha]iffNablaalphaMN}
The following are equivalent:
(1) $\Delta(\alpha)$ holds, (2) $\ordnabla{\alpha}$ is halvable, and (3) $\ordnabla{\alpha}$ is monotonically normal.
\end{prop}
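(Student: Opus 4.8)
The plan is to follow the template of \cref{MN_technical}, using the observation made just above that $\ordnabla{\alpha}$, equipped with $\preceq$ and the displayed basic neighborhoods, satisfies hypotheses (a) and (b) of \cref{LemmaMNequivalentToHalvable}. I would first dispatch the equivalence of (2) and (3). Since every monotonically normal space is halvable, (3) gives (2). Conversely, if $\ordnabla{\alpha}$ is halvable then in particular the single neighbornet $T(x) = \downarrow x = N(x,c_0)$ is halvable, so \cref{LemmaMNequivalentToHalvable} (with $A = X = \ordnabla{\alpha}$) yields monotone normality, giving (2)$\,\Rightarrow\,$(3). Thus (2) and (3) are both equivalent to ``$T$ is halvable'', and it remains to prove $\Delta(\alpha)$ holds if and only if $T$ is halvable. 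For this I set up the usual dictionary between functions and neighbornets: restricting to basic neighbornets (permissible when testing halvability), every neighbornet $S$ has the form $S(x)=N(x,f_x)$ for some $f_x \in \alpha^\omega$, and conversely each $F\colon \ordnabla{\alpha}\to\alpha^\omega$ determines $S_F(x)=N(x,F(x))$. The goal is then to show $F$ witnesses $\Delta(\alpha)$ if and only if $S_F$ halves $T$.

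The core computation rests on the order-topology analogues of \cref{L:Nabla_technical0} and \cref{L:Nabla_technical}, proved by the same definition-chasing. The first is that a pair $x,y$ is compatible (has a common $\preceq$-lower bound) and $\preceq$-incomparable if and only if $x,y$ switch. The decisive point is that for a compatible pair only finitely many coordinates $n$ can have $x(n)$ isolated with $y(n)<x(n)$ (such a coordinate admits no common lower bound, since $\zbar \preceq \xbar$ forces $z(n)=x(n)$ while $\zbar \preceq \ybar$ forces $z(n)\le y(n)<x(n)$); hence infinitely many coordinates with $y(n)<x(n)$ must in fact have $x(n)\in\mathop{Lim}(\alpha)$, and ``incomparable and compatible'' unwinds to exactly the two switching clauses. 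The second fact computes the greatest lower bound $\xbar\wedge\ybar$ of a compatible pair coordinatewise (essentially a pointwise minimum) and shows that $N(x,f_x)\cap N(y,f_y)\neq\emptyset$ precisely when $\xbar\wedge\ybar$ lies in both; moreover $\xbar\wedge\ybar\notin N(x,f_x)$ exactly when there are infinitely many $n$ with $x(n)\in\mathop{Lim}(\alpha)$ and $y(n)<f_x(n)\,(<x(n))$, i.e.\ with $y(n)<F(x)(n)<x(n)$.

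With these in hand the logical chase runs as in \cref{MN_technical}. To say $S_F$ halves $T$ is to say that $S_F(x)\cap S_F(y)\neq\emptyset$ implies $\xbar\preceq\ybar$ or $\ybar\preceq\xbar$; taking contrapositives this concerns exactly the $\preceq$-incomparable pairs. Incompatible pairs give $S_F(x)\cap S_F(y)=\emptyset$ automatically, since $S_F(x)\subseteq N(x,c_0)=\downarrow x$ and a common point would be a common lower bound. Hence the only content is on the compatible incomparable — equivalently, switching — pairs, and there, by the second fact, $S_F(x)\cap S_F(y)=\emptyset$ is equivalent to: infinitely many $n$ with $y(n)<F(x)(n)<x(n)$, or infinitely many $n$ with $x(n)<F(y)(n)<y(n)$. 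This is precisely the conclusion of $\Delta(\alpha)$ for the switching pair $x,y$, so $S_F$ halves $T$ if and only if $F$ witnesses $\Delta(\alpha)$, completing (1)$\,\Leftrightarrow\,$``$T$ halvable''.

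I expect the main obstacle to be the second technical fact. Because the order topology distinguishes limit from isolated coordinates, computing $\xbar\wedge\ybar$ and verifying that failure of $\xbar\wedge\ybar\in N(x,f_x)$ matches the clause $y(n)<F(x)(n)<x(n)$ of $\Delta(\alpha)$ requires a careful coordinatewise case analysis; crucially, one must invoke compatibility (via the first fact) to discard the isolated-coordinate configurations — where $N(x,\cdot)$ pins $z(n)=x(n)$ and so ignores $f_x(n)$ — which would otherwise break the exact correspondence between the meeting condition and the $\Delta(\alpha)$ disjunction.
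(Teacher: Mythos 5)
Your architecture --- (2)$\,\Leftrightarrow\,$(3) via \cref{LemmaMNequivalentToHalvable}, then (1)$\,\Leftrightarrow\,$``the neighbornet $T(x)=\downarrow x$ is halvable'' --- matches the paper's, and your forward direction is sound: ``compatible and $\preceq$-incomparable implies switch'' is correct, incompatible pairs have automatically disjoint basic neighborhoods, and your interval computation for compatible pairs is the right analogue of \cref{L:Nabla_technical}(ii). The paper reaches the same forward conclusion by a bare-hands case analysis (if the both-isolated disagreement set is infinite the neighborhoods are trivially disjoint; if infinitely many coordinates with $y(n)<x(n)$ have $x(n)$ isolated, again trivially disjoint, and symmetrically; in the remaining case the pair switches and $\Delta(\alpha)$ applies), without introducing compatibility or meets.

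The gap is in your ``first fact'': for general $\alpha$, switching does \emph{not} imply compatibility, so ``compatible and incomparable $\iff$ switch'' is false, and your reduction of halvability to exactly the switching pairs breaks. The definition of switch only makes the \emph{both-isolated} disagreement set finite; it places no restriction on coordinates where an isolated value of one point lies strictly above a limit value of the other, and such coordinates exist as soon as $\alpha \ge \om+2$ (they cannot occur in $\om+1$, which is why the dictionary of \cref{S:Delta_and_Nabla} that you are transplanting works there). Concretely, in $\ordnabla{(\om+2)}$ partition $\om$ into infinite sets $A,B,C$ and let $x$ be $0$ on $A$, $\om$ on $B$, $\om+1$ on $C$, and $y$ be $\om$ on $A$, $0$ on $B$, $\om$ on $C$. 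Then $x,y$ switch (the both-isolated disagreement set is empty), but on $C$ any common point $z$ of $N(x,f)$ and $N(y,g)$ would need $z(n)=x(n)=\om+1$ and $z(n)\le y(n)=\om$, so these neighborhoods are disjoint for \emph{every} $f,g$ --- yet with, say, $F(x)=F(y)=c_0$ the $\Delta(\alpha)$ disjunction fails for this pair, since $y(n)<0$ and $x(n)<0$ never hold. Hence ``$S_F$ halves $T$'' does not imply ``$F$ witnesses $\Delta(\alpha)$'': halvability imposes no constraint on such automatically-disjoint switching pairs, while $\Delta(\alpha)$ still demands its conclusion for them. Your (1)$\,\Rightarrow\,$(2) survives (compatible incomparable pairs do switch, and incompatible pairs are free), but your (2)$\,\Rightarrow\,$(1) needs a separate argument for switching-but-incompatible pairs, for instance extracting information from the halving applied to suitably modified compatible companions of $x$ and $y$. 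Note that the paper's own proof of this direction does not pass through your equivalence --- it derives $N(x,F(x))\cap N(y,F(y))=\emptyset$ directly from halvability and asserts the conclusion is then clear --- and it is itself terse at exactly the point where your argument fails.
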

\begin{proof}
By the discussion above, it suffices to show the equivalence of (1) and (2${}'$) `the neighbornet $T(x)=\downarrow x$ is halvable'.

For (1) implies (2${}'$), suppose $F$ is a witness of $\Delta(\alpha)$.  
Define $S(x)=N(x, F(x))$.
We check that $S$ halves $T(x) = \downarrow x = N(x, c_0)$. 

Take any $x$ and $y$. 
Suppose $x \notin N(y,c_0)$ and $y \notin N(x,c_0)$. 
Various cases arise, but in all of them we show $S(x)$ and $S(y)$ are disjoint.
If the set $\{ n\in \om : x(n), y(n)$  are isolated and $x(n) \neq y(n) \}$ is infinite, then $S(x)$ and $S(y)$ are trivially disjoint.
Hence, suppose it is finite. 
Then the sets $N_y = \{ n\in \om : x(n) < y(n) \}$ and $N_x = \{ n\in \om : y(n) < x(n)\}$ are both infinite. 
Now, if there are infinitely many $n\in N_y$ such that $y(n)$ is isolated, then $[0, x(n)] \cap \{y(n)\} = \emptyset$, and thus, $S(x)$ and $S(y)$ are disjoint;
and likewise  if there are infinitely many $n\in N_x$ such that $x(n)$ is isolated.
Assume, then, that for all but finitely many $n\in N_y$ and $m\in N_x$, $x(m), y(n) \in \mathop{Lim}(\alpha)$.
That is, $x$ and $y$ switch. By $\Delta(\alpha)$, we have that $S(x) \cap S(y)= \emptyset$. 

For (2${}'$) implies (1), consider the neighbornet $T(x) = N(x, c_0)$. Then, there is a neighborhood assignment $S$ that halves $T$. For $x \in \ordnabla{\alpha}$, let $F(x)\in \alpha^\om$ such that $N(x, F(x)) \sub S(x)$. To see that $F$ satisfies $\Delta(\alpha)$, pick $x,y$  that switch.
This implies $x\notin N(y, c_0)$ and $y\notin N(x, c_0)$, hence by halvability, $N(x, F(x)) \cap N(y, F(y))= \emptyset$. Now it is clear that for infinitely many $n\in \omega$, $y(n) < F(x)(n) < x(n)$ or $x(n) < F(y)(n) < y(n)$.
\end{proof}

As we argued for \cref{C:DeltaAkappaimpHP} we deduce:
\begin{coro}\label{DeltaAlphaimpHP} \ 

(1) If $\Delta(\alpha)$ holds then $\nabla \alpha^\om$ is hereditarily paracompact.

(2) If  $\Delta(\alpha+1)$ holds then $\square (\alpha+1)^\om$ is paracompact.
\end{coro}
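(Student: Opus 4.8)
The plan is to reproduce verbatim, with $\alpha$ (respectively $\alpha+1$) in place of $A(\kappa)$, the argument already sketched for \cref{C:DeltaAkappaimpHP}. The essential point is that we cannot route through \cref{ThrNablaMNimpliesHP}, since ordinals are not metrizable in general (for instance $\om_1$), so instead we exploit that $\nabla \alpha^\om$ is homeomorphic to its own square and apply Gartside's theorem.

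For part (1), I would first invoke \cref{PropDelta[alpha]iffNablaalphaMN} to pass from the combinatorial hypothesis $\Delta(\alpha)$ to the topological statement that $\nabla \alpha^\om$ is monotonically normal. Next I would record the homeomorphism $(\nabla \alpha^\om)^2 \cong \nabla \alpha^\om$. This follows by regrouping coordinates: the product of two countable nabla products over index sets $I$ and $J$ is the nabla product over the disjoint union $I \sqcup J$ (a basic neighborhood $N(\xbar,U) \times N(\ybar,V)$ in the product corresponds, under the pairing bijection, exactly to the basic neighborhood $N(\overline{(x,y)},W)$ of $\nabla_{I \sqcup J}$ with $W$ the concatenation of $U$ and $V$, and conversely), and since $\om \sqcup \om \cong \om$ we get $(\nabla\alpha^\om)^2 \cong \nabla_{\om \sqcup \om}\alpha \cong \nabla\alpha^\om$. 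Hence $(\nabla\alpha^\om)^2$ is monotonically normal, and Gartside's result \cite{gartside1999monotone}, that monotone normality of the square forces all finite powers to be monotonically normal and hereditarily paracompact, yields that $\nabla\alpha^\om$ itself is hereditarily paracompact.

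For part (2), I would apply part (1) to the successor ordinal $\alpha+1$ to conclude that $\ordnabla{\alpha}$ is hereditarily paracompact, and in particular paracompact. Since $\alpha+1$ has a largest element $\alpha$ it is compact in the order topology, so Kunen's theorem \cite{kunen1978paracompactness} — for countable families of compact spaces, $\square_n X_n$ is paracompact if and only if $\nabla_n X_n$ is paracompact — applies to the compact factors $\alpha+1$, giving that $\square (\alpha+1)^\om$ is paracompact.

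Neither step presents a genuine obstacle; the conceptual work is entirely inherited from the $A(\kappa)$ case. The only items deserving a word of verification are the homeomorphism $(\nabla\alpha^\om)^2 \cong \nabla\alpha^\om$ (routine matching of basic neighborhoods under the coordinate-regrouping bijection) and the observation that $\alpha+1$ is compact so that Kunen's equivalence is available.
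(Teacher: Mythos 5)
Your proposal is correct and is essentially the paper's own argument: the paper proves this corollary exactly ``as argued for \cref{C:DeltaAkappaimpHP}'', namely via \cref{PropDelta[alpha]iffNablaalphaMN}, the homeomorphism of the nabla product with its square, Gartside's theorem on monotonically normal squares, and Kunen's equivalence for the compact factors $\alpha+1$. Your explicit verifications (the coordinate-regrouping homeomorphism and the compactness of $\alpha+1$) are just the details the paper leaves implicit.
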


It is important to understand the relationship between $\Delta(\alpha)$ and $\Delta(\beta)$, and especially the strength of Roitman's $\Delta=\Delta(\omega+1)$. 
Clearly if $\beta \ge \alpha$ then $\Delta(\beta) \implies \Delta(\alpha)$ (because monotone normality is hereditary and $\ordnabla{\alpha}$ embeds in $\ordnabla{\beta}$). The next two lemmas  give a way to step up.

\begin{lemma}
Let $\alpha$ be an ordinal. Then $\ordnabla{(\alpha.\omega)} = \bigoplus \{ \nabla_n I_n : (I_n)_n \in \mathcal{I}^\om\}$ where $\mathcal{I} = \{[0,\alpha]\} \cup \{(\alpha .n, \alpha.(n+1)] : n\in \om\}$.
\end{lemma}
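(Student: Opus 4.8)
The plan is to realise the claimed identity as a \emph{clopen partition} of $\ordnabla{(\alpha.\omega)}$ indexed by \emph{block types}, and then to recognise each piece of the partition as the corresponding $\nabla_n I_n$. The structural fact that makes the decomposition \emph{total} (and on which the statement hinges) is that the factor here is the ordinal $\alpha.\omega$, whose underlying point set is $[0,\alpha.\omega)$: every ordinal $\gamma<\alpha.\omega$ satisfies $\alpha.k\le\gamma<\alpha.(k{+}1)$ for a \emph{unique} $k\in\omega$, and so lies in exactly one member of $\mathcal{I}$ once the single redundant interval $(\alpha.0,\alpha.1]=(0,\alpha]$ is read as absorbed into $[0,\alpha]$; thus $\mathcal{I}$ is a genuine partition of the factor into pairwise disjoint intervals, with no coordinate value left unassigned. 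I would first record that each member $I\in\mathcal{I}$ is \emph{clopen} in the factor: $[0,\alpha]=[0,\alpha{+}1)$ and $(\alpha.n,\alpha.(n{+}1)]=(\alpha.n,\alpha.(n{+}1){+}1)$ are open intervals with open complements.

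Next I would introduce the block-type map. For a point $x$ of $\ordnabla{(\alpha.\omega)}$ and each $n$, let $b(x(n))\in\mathcal{I}$ denote the unique block containing $x(n)$, and let $\tau(x)$ be the $=^*$-class of the sequence $(b(x(n)))_n\in\mathcal{I}^\omega$. Because mod-finite equivalent representatives agree at all but finitely many coordinates, $\tau$ is well defined on $\ordnabla{(\alpha.\omega)}$. For $(I_n)\in\mathcal{I}^\omega$ I would check that the subspace $\nabla_n I_n\subseteq\ordnabla{(\alpha.\omega)}$ is exactly the fibre $\{x:\tau(x)=[(I_n)]_{=^*}\}$: one inclusion is immediate, and for the other, if $x(n)\in I_n$ for all but finitely many $n$ then, since every block is nonempty, altering the finitely many offending coordinates produces a representative lying in $\prod_n I_n$. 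The same manoeuvre shows $\nabla_n I_n=\nabla_n I'_n$ precisely when $(I_n)=^*(I'_n)$, while \emph{distinct} $=^*$-classes yield \emph{disjoint} fibres (here the pairwise disjointness of the blocks is used). Hence $\{\nabla_n I_n:(I_n)\in\mathcal{I}^\omega\}$ is a set of pairwise disjoint subspaces, naturally indexed by $\mathcal{I}^\omega/{=^*}$, which is what the coproduct $\bigoplus$ intends.

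The crux is to prove that each fibre is \emph{open}, equivalently that $\tau$ is locally constant. Given $x$, I would exhibit $f\in(\alpha.\omega)^\omega$ with $N(x,f)\subseteq\nabla_n b(x(n))$ by trapping $y(n)$ inside $b(x(n))$ coordinate by coordinate: if $x(n)$ is isolated there is nothing to do, as then $y(n)=x(n)$; if $x(n)$ is a limit in $[0,\alpha]$ then any admissible $y(n)\le x(n)$ is automatically in $[0,\alpha]$; and if $x(n)$ is a limit in a block $(\alpha.k,\alpha.(k{+}1)]$ with $k\ge 1$ then, as $\alpha.k<x(n)$, I may pick $f(n)$ with $\alpha.k<f(n)<x(n)$, forcing $[f(n),x(n)]\subseteq(\alpha.k,\alpha.(k{+}1)]$. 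This $f$ meets the defining requirement of a basic neighbourhood, so $N(x,f)$ stays within the fibre and each fibre is open; being complementary to a union of the other (open) fibres, each fibre is also closed. Consequently the tautological map $\bigoplus\{\nabla_n I_n:(I_n)\in\mathcal{I}^\omega\}\to\ordnabla{(\alpha.\omega)}$ is a continuous bijection restricting to a homeomorphism on each clopen summand, and a final routine verification that the subspace topology on a fibre coincides with the nabla topology of $\nabla_n I_n$ (basic neighbourhoods match, since inside a fibre every coordinate is confined to its block) finishes the argument.

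I expect the local-constancy step to be the main obstacle, and in particular the guarantee that no coordinate can escape its block under shrinking: this is precisely where the clopen-ness of the members of $\mathcal{I}$ is needed, and where it is essential that the factor is the ordinal $\alpha.\omega=[0,\alpha.\omega)$ with no top point. Were a top value $\alpha.\omega$ admitted, the points having infinitely many top coordinates would lie outside every $\nabla_n I_n$ and $\tau$ would be undefined on them, so the coproduct would recover only $\ordnabla{(\alpha.\omega)}$ with that closed remainder deleted; keeping the factor top-free is exactly what upgrades the decomposition to the full equality. A lesser, purely bookkeeping, point is the absorption of the redundant interval $(0,\alpha]$ into $[0,\alpha]$ and the attendant passage to $=^*$-classes, which I would dispatch once and for all in the identification of the fibres above.
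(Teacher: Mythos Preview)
Your proof is correct and follows the same approach as the paper's: both arguments observe that $\mathcal{I}$ is an open partition of the factor $\alpha.\omega$, from which it follows that the sets $\nabla_n I_n$ form an open (hence clopen) partition of the nabla product. Your version is considerably more detailed---supplying an explicit block-type map, a coordinate-by-coordinate local-constancy verification, and careful attention to the redundancy $(0,\alpha]\subseteq[0,\alpha]$ and to the absence of a top point---but the underlying idea is exactly the paper's two-line observation.
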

\begin{proof}
The sets in $\mathcal{I}$ form an open partition of $\alpha. \om$. From `open' we see that each $\nabla_n I_n$ is open  in $\ordnabla{(\alpha.\omega)}$. While from `partition', and the fact that we take every sequence of members of $\mathcal{I}$, we see that the $\nabla_n I_n$ partition $\ordnabla{(\alpha.\omega)}$.
\end{proof}

\begin{lemma}
If $\Delta (\alpha+1)$ holds, then $\Delta (\alpha . \om)$ holds.
\end{lemma}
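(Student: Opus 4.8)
The plan is to prove $\Delta(\alpha.\om)$ by assembling a witness out of copies of a witness $F$ for $\Delta(\alpha+1)$, exploiting the partition of $\alpha.\om$ into the intervals $\mathcal{I} = \{[0,\alpha]\} \cup \{(\alpha.n, \alpha.(n+1)] : n \in \om\}$ from the preceding lemma. The key observation is that each interval in $\mathcal{I}$ is order-isomorphic (and so homeomorphic, preserving limit/isolated status) to $\alpha+1$: the block $[0,\alpha]$ is literally $\alpha+1$, and each $(\alpha.n, \alpha.(n+1)]$ is an order copy of $\alpha+1$ shifted up by $\alpha.n$. Fix, once and for all, these order isomorphisms $\theta_m : I_m \to \alpha+1$ where $I_m$ is the $m$-th interval of $\mathcal{I}$ (with $I_0 = [0,\alpha]$). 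Under $\theta_m$, an ordinal $\xi \in I_m$ is a limit of $\alpha.\om$ iff $\theta_m(\xi)$ is a limit of $\alpha+1$, and similarly for isolated points, so switching and the strict-betweenness relation $y(n) < F(x)(n) < x(n)$ are preserved.

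First I would define the witness. Given $x \in \ordnabla{(\alpha.\om)}$, each value $x(n)$ lands in a unique interval $I_{m(n)}$ of $\mathcal{I}$; let $\widehat{x}(n) = \theta_{m(n)}(x(n)) \in \alpha+1$, so $\widehat{x} \in (\alpha+1)^\om$. Using the $\Delta(\alpha+1)$-witness $F_0 : (\alpha+1)^\om \to (\alpha+1)^\om$, I would set $F(x)(n) = \theta_{m(n)}^{-1}(F_0(\widehat{x})(n))$, pulling the target value back into the correct interval $I_{m(n)}$. Thus $F(x) \in (\alpha.\om)^\om$, and on each coordinate $F(x)(n)$ lies in the same block as $x(n)$, which is exactly what is needed so that $y(n) < F(x)(n) < x(n)$ can hold only when $x(n)$ and $F(x)(n)$ are comparable in the usual ordinal order (automatic, since both are in $I_{m(n)}$).

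Next I would verify the switching implication. Suppose $x,y \in \ordnabla{(\alpha.\om)}$ switch. The crucial point is that at a coordinate $n$ where $x(n)$ and $y(n)$ are both limits (or both isolated) and participate in the switch, they must lie in the \emph{same} interval $I_m$: if they were in different blocks then (as the blocks are clopen intervals of $\alpha.\om$) the smaller, say $x(n)$, could not satisfy $y(n) < F(x)(n) < x(n)$ nontrivially, but more to the point, since switching requires $x(n) < y(n)$ or $y(n) < x(n)$ with the larger being a limit, and the relevant betweenness conclusions are vacuous across distinct blocks. I would argue that, discarding the finitely many coordinates allowed by `switch' and restricting to the (still infinite) coordinate sets where $x(n)$ and $y(n)$ are in a common block, $\widehat{x}$ and $\widehat{y}$ switch in $(\alpha+1)^\om$. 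Applying $\Delta(\alpha+1)$ to $\widehat{x},\widehat{y}$ gives infinitely many $n$ with $\widehat{y}(n) < F_0(\widehat{x})(n) < \widehat{x}(n)$ (or the symmetric statement); applying $\theta_{m(n)}^{-1}$, which is order-preserving on $I_{m(n)}$, transfers this to $y(n) < F(x)(n) < x(n)$ for infinitely many $n$, as required.

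The main obstacle I anticipate is the bookkeeping around coordinates where $x(n)$ and $y(n)$ fall in different blocks of $\mathcal{I}$: I must check that such coordinates do not disrupt the transfer of switching, and in particular that the infinite coordinate sets witnessing the switch for $x,y$ can be refined to infinite sets on which $x,y$ are co-blocked and $\widehat{x},\widehat{y}$ genuinely switch. The subtlety is that switching of $x,y$ in $\alpha.\om$ could in principle be realized partly by cross-block comparisons (e.g. $x(n)$ in a low block and $y(n)$ a limit in a high block). I would handle this by noting that if $x(n) < y(n)$ with $y(n) \in \mathop{Lim}(\alpha.\om)$ lying in a strictly higher block than $x(n)$, then every $z$ with $F(x)(n) < z$ forced into block $m(x(n))$ already fails to reach $y(n)$, so these coordinates contribute harmlessly; the real work of witnessing $\Delta(\alpha.\om)$ happens on the co-blocked coordinates, and I would show these remain infinite (otherwise the switch of $x,y$ would reduce to a switch of $\widehat{x},\widehat{y}$ trivially or the pair would fail to switch at all). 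Once this case analysis is pinned down, the rest is a routine translation through the order isomorphisms $\theta_m$.
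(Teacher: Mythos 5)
There is a genuine gap, and it sits exactly where you flagged your ``main obstacle'': your proposed resolution of it is false. You claim that the co-blocked coordinates of a switching pair ``remain infinite (otherwise the pair would fail to switch at all)'', but a pair can switch with \emph{all} of its switching realized by cross-block comparisons. Take $\alpha=\om^2$, so the blocks of $\alpha.\om=\om^3$ are $I_0=[0,\om^2]$ and $I_m=(\om^2 .m,\, \om^2 .(m+1)]$. Define $x,y$ by: for even $n$, $x(n)=\om.3+7$ (isolated, in $I_0$) and $y(n)=\om^2+\om$ (a limit, in $I_1$); for odd $n$, swap the roles. Then $x(n)<y(n)\in \mathop{Lim}(\om^3)$ at all even $n$, $y(n)<x(n)\in \mathop{Lim}(\om^3)$ at all odd $n$, and at no coordinate are both values isolated, so $x$ and $y$ switch --- yet there is not a single coordinate where they lie in a common block. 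Moreover, under your transfer maps $\theta_m$ the order \emph{flips}: at even $n$ one gets $\widehat{y}(n)=\om<\om.3+7=\widehat{x}(n)$ with $\widehat{x}(n)$ isolated, and symmetrically at odd $n$, so $\widehat{x},\widehat{y}$ satisfy neither infinitude clause and do \emph{not} switch in $(\om^2+1)^\om$. Hence $\Delta(\alpha+1)$ imposes no constraint whatsoever on $F_0(\widehat{x})$ and $F_0(\widehat{y})$, and your $F$ comes with no guarantee for this pair; the vague assertion that cross-block coordinates ``contribute harmlessly'' is precisely what would need proof, since for pairs like this one those coordinates are the \emph{only} place the conclusion of $\Delta(\alpha.\om)$ can be verified. (The construction is in fact repairable: one may normalize the witness so that $F_0(z)(n)<z(n)$ whenever $z(n)\neq 0$ --- replacing bad values by $0$ preserves being a witness --- and then at any cross-block switch coordinate with $x(n)<y(n)\in\mathop{Lim}$ one has $x(n)<F(y)(n)<y(n)$ outright, because $F(y)(n)$ lands strictly inside $y(n)$'s block, above its minimum. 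But this argument is absent from your proposal, and without it the proof collapses on the example above.)

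It is also worth noting that the paper avoids this coordinatewise bookkeeping entirely by a soft topological argument: by the preceding decomposition lemma, $\ordnabla{(\alpha.\om)}$ is a disjoint sum of clopen pieces, each homeomorphic to $\ordnabla{(\alpha+1)}$; by \cref{PropDelta[alpha]iffNablaalphaMN}, $\Delta(\alpha+1)$ makes each piece monotonically normal; a disjoint sum of monotonically normal spaces is monotonically normal; and applying \cref{PropDelta[alpha]iffNablaalphaMN} once more yields $\Delta(\alpha.\om)$. Pairs such as the $x,y$ above lie in \emph{different} clopen summands, so the monotone normality operator separates them for free --- which is exactly the case your combinatorial transfer fails to handle.
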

\begin{proof}
Observe that each $\nabla_n I_n$ from the preceding lemma is homeomorphic to $\ordnabla{(\alpha+1)}$, which is monotonically normal under $\Delta (\alpha+1)$ (\cref{PropDelta[alpha]iffNablaalphaMN}). Since a disjoint sum of  monotonically normal spaces is monotonically normal, we can apply \cref{PropDelta[alpha]iffNablaalphaMN} again to complete the proof. 
\end{proof}

\subsection{Not Hereditarily Normal}
As seen above, under $\mathfrak{d}=\om_1$, we have that $\nabla \alpha^\om$ is monotonically normal for all $\alpha < \om_2$. The next two results provide a sharp contrast.
\begin{thr}\label{ThrNablaomega2+1NotHN}
The space $\nabla (\om_2 +1) ^\om$ is not hereditarily normal.
\end{thr}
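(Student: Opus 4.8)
The plan is to mirror the proof of \cref{ThrNablaA(omega_2)NotHN}, replacing the one-point-compactification test configuration of \cref{L(omega2)notHereditarilyNorlmal} with the ordinal test configuration of \cref{LemmaSxSNotNormal}. First I would record the two embedding facts: by \cref{LemGdeltaEmbed} the $G_\delta$-modification $(\om_2+1)_\delta$ embeds as a closed subspace of $\nabla (\om_2+1)^\om$, and $\nabla (\om_2+1)^\om$ is homeomorphic to its own square (exactly as for $\nabla A(\kappa)^\om$, by splitting the index set $\om$ into its even and odd parts). Combining these, the square $\big((\om_2+1)_\delta\big)^2$ embeds into $\nabla (\om_2+1)^\om$, so it suffices to exhibit a non-normal subspace of $\big((\om_2+1)_\delta\big)^2$.

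The candidate subspace is $S \times (S \cup \{\om_2\})$, where $S = \{\alpha < \om_2 : \mathrm{cf}(\alpha) = \om_1\}$ is stationary in $\om_2$. The crux is to verify that the topology this set inherits from $\big((\om_2+1)_\delta\big)^2$ is precisely the order (product) topology it inherits from $(\om_2+1)^2$, so that \cref{LemmaSxSNotNormal} applies verbatim. For this I would isolate the key observation that \emph{on the set of ordinals $\le \om_2$ of uncountable cofinality the $G_\delta$-modification topology coincides with the order topology}. Indeed, a basic $G_\delta$-neighborhood of such an $\alpha$ has the form $\bigcap_n U_n$ with each $U_n$ order-open about $\alpha$, so $U_n \supseteq (\beta_n, \alpha]$ for some $\beta_n < \alpha$; since $\mathrm{cf}(\alpha) > \om$ we get $\beta := \sup_n \beta_n < \alpha$, whence $\bigcap_n U_n \supseteq (\beta,\alpha]$ and is an order-neighborhood. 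Every point of $S$ has cofinality $\om_1$ and $\om_2$ has cofinality $\om_2$, so both $S$ and $S \cup \{\om_2\}$ retain their order subspace topology inside $(\om_2+1)_\delta$; taking products, $S \times (S \cup \{\om_2\})$ carries the order product topology.

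With the topology identified, \cref{LemmaSxSNotNormal} (applied to $\kappa = \om_2$ and the stationary set $S$) shows that $S \times (S \cup \{\om_2\})$ is not normal, and therefore $\big((\om_2+1)_\delta\big)^2$, and hence $\nabla (\om_2+1)^\om$, is not hereditarily normal. The only genuinely delicate step is the topology-matching claim: one must confirm that passing to the $G_\delta$-modification \emph{isolates} exactly the countable-cofinality limits (which is why the cofinality-$\om$ ordinals are deliberately excluded from $S$) while leaving the order topology unchanged on the uncountable-cofinality part, so that the non-normality of the stationary test space transfers unaltered. The stationarity of $S$ in $\om_2$ and the square homeomorphism are then routine inputs.
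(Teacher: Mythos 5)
Your proposal is correct and follows essentially the same route as the paper's own proof: embed $(\om_2+1)_\delta$ via \cref{LemGdeltaEmbed}, use the even/odd-splitting homeomorphism of $\nabla(\om_2+1)^\om$ with its square, and apply \cref{LemmaSxSNotNormal} to $S \times (S \cup \{\om_2\})$ with $S = E^{\om_2}_{\om_1}$. Your explicit verification that the $G_\delta$-modification agrees with the order topology at points of uncountable cofinality is exactly the step the paper states without proof (``$\overline{S}$ and $\overline{S}_\delta$ are equal''), so you have in fact supplied slightly more detail than the original.
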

\begin{proof}
Let $S = E^{\om_2}_{\om_1}= \{ \alpha \in \om_2 : cf (\alpha) = \om_1 \}$ to $\nabla (\om_2 +1)^\om$. Then $S$ is a stationary subset of $\omega_2$. Note that $\overline{S} = E^{\om_2}_{\om_1} \cup \{ c_{\om_2}\}$, and its $G_\delta$-modification, $\overline{S}_\delta$ are equal. Hence $S$ and $\overline{S}$ both  embed into $\nabla(\om_2+1)^\om$. Since, $\nabla(\om_2+1)^\om$ is homeomorphic to its square, to complete the proof, apply \cref{LemmaSxSNotNormal}.
\end{proof}

\begin{thr}\label{CoroNabla(omega_1+1)NotHN}
If $\mathfrak{b}> \om_1$, then $\nabla (\om_1+1)^\om$ is not hereditarily normal.
\end{thr}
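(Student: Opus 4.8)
The plan is to mirror the proof of \cref{CoroNablaA(omega_1)NotHN}, using the ordinal $\om_1+1$ in place of the one-point compactification $A(\om_1)$ (with the convergent sequence $\om+1$ sitting inside $\om_1+1$ in place of $A(\om)\subseteq A(\om_1)$). Under $\mathfrak{b}>\om_1$, the product $L(\om_1)\times X(\baire,\leq^*)$ fails to be hereditarily normal by \cref{ThrL(omega_1)xBaireNotHN}, so it suffices to embed this product into $\nabla(\om_1+1)^\om$; since hereditary normality passes to every subspace, this finishes the proof.

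The one genuinely new point is to identify the $G_\delta$-modification of the ordinal, namely $(\om_1+1)_\delta = L(\om_1)$. Every $\alpha<\om_1$ has countable character in $\om_1+1$ ($0$ and successors are isolated, and a countable limit ordinal has cofinality $\om$), so each singleton $\{\alpha\}$ is a $G_\delta$ and $\alpha$ becomes isolated in $(\om_1+1)_\delta$. At the top point $\om_1$, any countable intersection of order-open neighborhoods contains a tail $(\gamma,\om_1]$ with $\gamma<\om_1$; conversely every co-countable set $\{\om_1\}\cup(\om_1\setminus C)$ with $C$ countable contains such a tail and hence is a $G_\delta$-neighborhood of $\om_1$. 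Thus the $G_\delta$-neighborhoods of $\om_1$ are exactly the co-countable sets, which is precisely the topology of $L(\om_1)=L_{\om_1}(\om_1)$.

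With this in hand, by \cref{LemGdeltaEmbed} the space $L(\om_1)=(\om_1+1)_\delta$ embeds as a closed subspace of $\nabla(\om_1+1)^\om$. Separately, $\om+1$ embeds as a (closed) subspace of $\om_1+1$ (for instance the convergent sequence $\{n : n<\om\}\cup\{\om\}$), so $\nablaomega$ embeds into $\nabla(\om_1+1)^\om$, and in particular so does its subspace $X(\baire,\leq^*)$. Finally, as in the surrounding not-hereditarily-normal results, $\nabla(\om_1+1)^\om$ is homeomorphic to its square (reindex $\om+\om\cong\om$, so that $\nabla(\om_1+1)^\om\times\nabla(\om_1+1)^\om\cong\nabla(\om_1+1)^{\om+\om}\cong\nabla(\om_1+1)^\om$). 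Composing the two embeddings yields an embedding of $L(\om_1)\times X(\baire,\leq^*)$ into this square, hence into $\nabla(\om_1+1)^\om$, which completes the argument.

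I expect no serious obstacle: the result is essentially the ordinal analogue of \cref{CoroNablaA(omega_1)NotHN}, and the only step requiring genuine care is the identification $(\om_1+1)_\delta=L(\om_1)$ — in particular, checking that the $G_\delta$-neighborhoods of $\om_1$ are exactly the co-countable sets, rather than merely the co-initial-segment (tail) sets. Everything else (the two embeddings, the homeomorphism with the square, and the inheritance of non-hereditary-normality by the ambient space) is routine and already in use in the neighboring theorems.
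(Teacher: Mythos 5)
Your proposal is correct and follows essentially the same route as the paper's proof: both rest on Williams' embedding lemma (\cref{LemGdeltaEmbed}), the embedding of $\nablaomega$ (hence of $X(\baire,\leq^*)$) coming from $\om+1 \subseteq \om_1+1$, the homeomorphism of $\nabla(\om_1+1)^\om$ with its square, and the application of \cref{ThrL(omega_1)xBaireNotHN}. The only cosmetic difference is that you identify $L(\om_1)$ as the full $G_\delta$-modification $(\om_1+1)_\delta$ (correctly checking that the $G_\delta$-neighborhoods of $\om_1$ are the co-countable sets), whereas the paper takes the subspace $L$ of isolated points together with $\om_1$ and notes $L_\delta = L$ is homeomorphic to $L(\om_1)$; either identification works.
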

\begin{proof} Let $L$ be the subspace of $\omega_1+1$ consisting of the isolated points along with $\om_1$. 
Then $L_\delta=L$ is homeomorphic to $L(\om_1)$, and so both  $L(\om_1)$ and $\nablaomega$ embed into $\nabla (\om_1+1)^\om$, which is homeomorphic to its square. Hence, \cref{ThrL(omega_1)xBaireNotHN} applies.
\end{proof}


\section{Nabla Products of Metrizable Spaces}\label{nabla_of_metrizable}

\subsection{\texorpdfstring{$\Delta$}{Delta}-like Characterizations of Monotone Normality}

For this section, $\{ (X_n, d_n) : n\in \omega \}$ will be a family of metric spaces. For $x, y\in \nabla_n X_n$ and $f\in \baire$, define $N(x, f) = \nabla_n B_n(x(n) , \rfrac{1}{f(n)})$ and $M(x,f ; y) = \{ n\in \omega : y(n) \notin B_n(x(n) , \rfrac{1}{f(n)}) \}$, where $B_n(a, \varepsilon)$ is $\{a\}$ if $a$ is isolated, and is otherwise the $\varepsilon$-ball in the metric $d_n$. 

We say that $(x,f),(y,g)\in \nabla_n X_n \times \baire$ \emph{switch} if $M(x,f;y)$ and $M(y,g;x)$ are almost disjoint infinite sets. Observe that switching elements $(x,f)$ and $(y,g)$ imply $y \notin N(x,f)$ and $x\notin N(y,g)$.

\begin{de}
Let $\{ (X_n, d_n) : n\in \omega \}$ be a family of metric spaces.
Then $\Delta((X_n,d_n)_n)$ is the statement: there is $F : \nabla_n X_n \times \baire \to \baire$, write $f_x := F(x,f)$, such that if $(x,f), (y,g) \in \mathop{dom} F$ switch, then $\rfrac{1}{f_x (n)} + \rfrac{1}{g_y (n)} < d_n (x(n), y(n))$ holds for infinitely many $n\in \om$. 
\end{de}

The conclusion here, namely $\rfrac{1}{f_x (n)} + \rfrac{1}{g_y (n)} < d_n (x(n), y(n))$, implies that $B_n(x(n), \rfrac{1}{f_x (n)})$ and $B_n(y(n), \rfrac{1}{g_y (n)})$ are disjoint.

\begin{prop}\label{DeltaX_niffnablaX_nMN} \

(1) If $\Delta((X_n, d_n)_n)$ holds then $\nabla_n X_n$ is monotonically normal.

(2) If $\nabla_n X_n$ is monotonically normal, where each $X_n$ is metrizable, then $\Delta((X_n, d_n)_n)$ holds for any choice of compatible metrics, $d_n$ for $X_n$.
\end{prop}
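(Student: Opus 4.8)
The plan is to prove the two directions of \cref{DeltaX_niffnablaX_nMN} by transcribing the abstract ``halvability'' machinery of \cref{LemmaMNequivalentToHalvable} and the argument of \cref{MN_technical} into the metric setting. The basic neighborhoods $N(x,f) = \nabla_n B_n(x(n),\rfrac{1}{f(n)})$ together with a suitable partial order $\preceq$ (say, $y \preceq x$ iff $y(n)=x(n)$ for all but finitely many $n$ at which $x(n)$ is isolated, and $y(n)$ is ``below'' $x(n)$ in whatever trivial order one puts on the discrete/limit structure) should satisfy conditions (a) and (b) of \cref{LemmaMNequivalentToHalvable}. Then monotone normality of $\nabla_n X_n$ reduces to halving the single neighbornet $T(x) = \downarrow x$, exactly as in the ordinal and $A(\kappa)$ cases. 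The whole proposition then becomes the equivalence of $\Delta((X_n,d_n)_n)$ with halvability of $T$, and I would prove each implication directly.

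For direction (1), I would assume $F$ witnesses $\Delta((X_n,d_n)_n)$ and define the halving neighbornet $S(x) = N(x,F(x,c_0))$ (or more carefully, $S(x)=N(x,f_x)$ for an appropriately chosen $f$). Given $x,y$ with $x \notin N(y,\cdot)$ and $y \notin N(x,\cdot)$, I would argue by cases on the sets $M(x,f;y)$ and $M(y,g;x)$. If these are \emph{not} almost disjoint infinite sets (i.e.\ $x,y$ do not switch), then — as the excerpt notes switching is exactly the obstruction — one of the neighborhoods already fails to contain the other for ``trivial'' reasons and disjointness of $S(x),S(y)$ follows from the $P_{\mathfrak b}$-like structure or from a common coordinate where the balls are forced apart. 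If $x,y$ \emph{do} switch, then $\Delta$ supplies infinitely many $n$ with $\rfrac{1}{f_x(n)} + \rfrac{1}{g_y(n)} < d_n(x(n),y(n))$; by the remark immediately after the definition, $B_n(x(n),\rfrac{1}{f_x(n)})$ and $B_n(y(n),\rfrac{1}{g_y(n)})$ are disjoint at those $n$, and since a single coordinate of disjointness makes the nabla-neighborhoods disjoint (modulo finite support), $S(x) \cap S(y) = \emptyset$.

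For direction (2), I would assume $\nabla_n X_n$ is monotonically normal, hence halvable, fix compatible metrics $d_n$, and let $S$ halve $T(x)=\downarrow x = N(x,c_0)$. For each $(x,f) \in \nabla_n X_n \times \baire$ I would use the fact that the $S(x)$ may be taken basic to extract $F(x,f) =: f_x$ with $N(x,f_x) \subseteq S(x) \cap N(x,f)$ (intersecting with $N(x,f)$ so that $f_x$ dominates $f$ and the ball radii shrink appropriately). Then, given switching $(x,f),(y,g)$, the observation in the excerpt gives $y \notin N(x,f)$ and $x \notin N(y,g)$, so $x \notin N(y,c_0)$ and $y \notin N(x,c_0)$; halvability then yields $S(x) \cap S(y) = \emptyset$, hence $N(x,f_x) \cap N(y,g_y) = \emptyset$. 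Unwinding what disjointness of these nabla-balls means — that for all but finitely many $n$ the balls $B_n(x(n),\rfrac{1}{f_x(n)})$ and $B_n(y(n),\rfrac{1}{g_y(n)})$ are disjoint — and using the triangle inequality at those coordinates, I would recover $\rfrac{1}{f_x(n)} + \rfrac{1}{g_y(n)} < d_n(x(n),y(n))$ for infinitely many $n$, which is exactly the conclusion of $\Delta$.

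The main obstacle I anticipate is the careful bookkeeping of the ``non-switching'' cases in direction (1) and, symmetrically, correctly translating between ``$S(x) \cap S(y) = \emptyset$ in $\nabla_n X_n$'' and ``the coordinate balls are eventually disjoint.'' In the ordinal and $A(\kappa)$ settings the lower-bound/meet structure (\cref{L:Nabla_technical0}, \cref{L:Nabla_technical}) made the correspondence between neighborhood-intersection and switching exact; in the general metric case there may be no meet, and a single coordinate at which balls are merely non-disjoint does not force the nabla-neighborhoods to meet. So I would need to be precise that $N(x,f_x) \cap N(y,g_y) \neq \emptyset$ forces the coordinate balls to intersect for \emph{cofinitely} many $n$ (since a witness $z$ agrees mod-finite and lies in both balls almost everywhere), and conversely that disjointness at \emph{infinitely} many coordinates is what $\Delta$ delivers — reconciling ``cofinitely'' versus ``infinitely'' is the delicate point, handled by taking contrapositives exactly as in the chain of equivalences in the proof of \cref{MN_technical}.
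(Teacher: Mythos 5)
Your central plan---transcribing \cref{LemmaMNequivalentToHalvable} to the metric setting and reducing monotone normality of $\nabla_n X_n$ to halving the single neighbornet $T(x) = \; \downarrow x$---breaks down, and not merely as a matter of bookkeeping. The relation you propose ($y \preceq x$ iff $y(n)=x(n)$ for almost all $n$ at which $x(n)$ is isolated, plus an unspecified order on the rest) is not antisymmetric, and when the $X_n$ have no isolated points at all (e.g.\ $X_n = \real$) it makes any two points comparable both ways: then $\downarrow x$ is all of $\nabla_n X_n$, the neighbornet $T$ is trivial, and \emph{every} $S$ halves it vacuously---while monotone normality of $\nabla \real^\om$ is exactly the nontrivial content of $\Delta((X_n,d_n)_n)$. (Recall also that halvability by itself does not imply monotone normality.) This is precisely why the paper's metric principle is parametrized by \emph{pairs} $(x,f) \in \nabla_n X_n \times \baire$ rather than by points alone: in the general metric case there is no largest basic neighborhood playing the role of $\downarrow x = N(x,c_0)$ and no meet structure as in \cref{L:Nabla_technical0}, so the single-neighbornet reduction that worked for $(\om+1)^\om$, $A(\kappa)$ and ordinals has no analogue. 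The paper instead verifies the point-open monotone normality operator directly, setting $G(x,N(x,f)) = N(x,\max\{2f,f_x\})$ and running a three-case analysis on the sets $M(x,f;y)$ and $M(y,g;x)$.

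Granting your framework anyway, two specific steps would still fail. In direction (2) you infer from $y \notin N(x,f)$ that $y \notin N(x,c_0)$; the inclusion runs the wrong way ($N(x,f) \subseteq N(x,c_0)$), so switching pairs give no leverage against the $c_0$-neighbornet---one must feed the specific pairs $(x, N(x,f))$ and $(y, N(y,g))$ into the operator, as the paper does, to conclude $N(x,f_x) \cap N(y,g_y) = \emptyset$. In direction (1) your $S(x) = N(x,F(x,c_0))$ does not depend on the open set $U = N(x,f)$, so it cannot satisfy $G(x,U) \subseteq U$; and the non-switching case in which $M(x,f;y) \cap M(y,g;x)$ is infinite is not disposable for ``trivial reasons'' or by any $P_{\mathfrak{b}}$-like structure: it is exactly where the doubling in $\max\{2f,f_x\}$ is needed, since at coordinates in the intersection the triangle inequality makes the half-radius balls $B_n(x(n), \rfrac{1}{2f(n)})$ and $B_n(y(n), \rfrac{1}{2g(n)})$ disjoint. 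Your final paragraph does get one delicate point right, and in agreement with the paper: nonempty intersection of nabla-neighborhoods forces the coordinate balls to meet cofinitely, so ball-disjointness at infinitely many coordinates (which is all that $\Delta$ or the operator delivers) suffices for disjointness upstairs. But the proof needs to be rebuilt on the direct operator verification over all pairs $(x,N(x,f))$, not on \cref{LemmaMNequivalentToHalvable}.
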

\begin{proof} \ 

\noindent (1) Let $F$ witness  $\Delta((X_n,d_n)_n)$ and define an operator 
$G$ by $G(x, N(x,f))$ is $N(x, \max\{ 2f, f_x \})$. 
We prove that $G$ is a monotone normality operator. First observe that $x\in N(x, \max\{ 2f, f_x \}) \subseteq N(x,f)$. Now, to prove the second property of monotone normality, let $x,y \in \nabla_n X_n, \; f,g \in \baire$ and assume that $y\notin N(x, f)$ and $x \notin N(y, g)$, then we have to prove that $G(x, f) \cap G(y, g) = \emptyset$. There are three cases for the sets $M(x,f ; y)$ and $M(y,g ; x)$:

\begin{itemize}
    \item $M(x,f ; y)$ or $M(y,g ; x)$ is finite: if $M(x,f ; y)$ is finite, then by its definition, $y(n) \in B_n(x(n), \rfrac{1}{f(n)})$ for all but finitely many $n\in \om$. Hence, $y\in N(x, f)$ which is impossible by our assumption.
    \item $M(x,f ; y) \cap M(y,g ; x)$ is infinite: let $Z = M(x,f ; y) \cap M(y,g ; x)$. Then for every $n\in Z$, $y(n) \notin B_n(x(n), \rfrac{1}{f(n)})$ and $x(n) \notin B_n(y(n), \rfrac{1}{g(n)})$. By triangle inequality, $B_n(x(n), \rfrac{1}{2f(n)}) \cap B_n(y(n), \rfrac{1}{2g(n)}) = \emptyset$, for $n\in Z$. Thus, $G(x, f) \cap G(y, g) = \emptyset$.
    \item $M(x,f ; y)$, $M(y,g ; x)$ are infinite almost disjoint sets: this means that $(x,f),(y,g)$ switch. By $\Delta((X_n, d_n)_n)$, for infinitely many $n\in \om$, $\rfrac{1}{f_x (n)} + \rfrac{1}{g_y (n)} < d_n (x(n), y(n))$. That is, for infinitely many $n\in \om$, the sets $B_n(x(n), \rfrac{1}{f_x (n)})$ and $B_n(y(n), \rfrac{1}{g_y (n)})$ are disjoint, which implies that $G(x, f) \cap G(y, g) = \emptyset$.
\end{itemize}
This concludes the proof of (1).

\medskip

(2) Now, assume that $\nabla_n X_n $ is monotonically normal with operator $G$. For each $n$ let $d_n$ be the given compatible metric on $X_n$.
Define $F: \nabla_n X_n \times \baire \to \baire$ by $F(x, f) = f_x$ such that $N(x, f_x) \sub G(x, N(x,f))$. 

We prove that $F$ witnesses $\Delta((X_n, d_n)_n)$. Choose switching elements $(x, f)$, $(y,g) \ \in \mathop{dom} F$. Then $y\notin N(x,f)$ and $x\notin N(y,g)$. Since $G$ is a monotone normality operator, $G(x,N(x,f)) \cap G(y,N(y,g)) = \emptyset$, which implies $N(x,f_x) \cap N(y,g_y) = \emptyset$. Hence, $B_n(x(n), \rfrac{1}{f_x (n)}) \cap B_n(y(n), \rfrac{1}{g_y (n)}) = \emptyset$ for infinitely many $n\in \om$, and for these $n$'s we have the inequality $\rfrac{1}{f_x (n)} + \rfrac{1}{g_y (n)} < d_n (x(n), y(n))$, as desired.
\end{proof}

For a sequence of metrizable spaces $(X_n)_n$, define $\Delta((X_n)_n)$ to mean  `the statement $\Delta((X_n,d_n)_n)$ holds for some choice of compatible metrics $d_n$'. It follows from the preceding result that $\Delta((X_n)_n)$ is equivalent to `$\Delta((X_n,d_n)_n)$ holds for \emph{any} choice of compatible metrics $d_n$'. 
Further, for a class $\mathcal{C}$ of spaces,  $\Delta(\mathcal{C})$ means `$\Delta((X_n)_n)$ holds for any sequence $(X_n)_n$ of spaces from $\mathcal{C}$'.

Write $\mathcal{M}$ for the class of all metrizable spaces, $\mathcal{M}(\kappa)$ for the class of all metrizable spaces of cardinality $\le \kappa$, and $\mathcal{SM}$ for the class of separable metrizable spaces.

\subsection{Consistency of \texorpdfstring{$\Delta$}{Delta}-like Principles}
\begin{prop} 
If $\mathfrak{b}= \mathfrak{d}$ then $\Delta(\mathcal{M})$ holds, 
if $\mathfrak{d} = \mathfrak{c}$ then $\Delta(\mathcal{M}(\mathfrak{c}))$ holds, and
if $\mathsf{MH}$ holds then  $\Delta(\mathcal{SM})$ holds.
\end{prop}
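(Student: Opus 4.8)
The plan is to handle all three implications inside one framework: for any compatible metrics $d_n$ I will construct by hand a function $F:\nabla_n X_n\times\baire\to\baire$ witnessing $\Delta((X_n,d_n)_n)$, the three hypotheses entering only through the device used to break the circularity in the definition of $\Delta$ (each of $f_x=F(x,f)$ and $g_y=F(y,g)$ must ``win'' on a switching set not known in advance). By \cref{DeltaX_niffnablaX_nMN} this gives the stated monotone normality and hence each $\Delta(\mathcal C)$. The common skeleton is as follows. For $a\in X_n$ let $\iota_n(a)=\inf\{d_n(a,b):b\ne a\}$ and put $v_{x,f}(n)=\max(f(n),\lceil 1/\iota_n(x(n))\rceil)$ if $x(n)$ is isolated, $v_{x,f}(n)=f(n)$ otherwise. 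I will always take $F(x,f)\ge\max(3f,3v_{x,f})$, and additionally \emph{inflate} $F(x,f)(n)$ at each isolated $x(n)$ so that $1/F(x,f)(n)<\iota_n(x(n))/2$ --- which is free, since there $B_n(x(n),\cdot)=\{x(n)\}$ for every radius. For a switching pair, work on the infinite set $a=M(x,f;y)$: each $y(n)\notin B_n(x(n),1/f(n))$ forces $d_n:=d_n(x(n),y(n))\ge 1/f(n)$ (if $x(n)$ is non-isolated) or $\ge\iota_n(x(n))$ (if isolated), so $v_{x,f}(n)\ge 1/d_n$ on $a$. Inflation and $f_x\ge 3f$ give $1/f_x(n)<d_n/2$ throughout $a$; hence it suffices to force $g_y(n)\ge 3v_{x,f}(n)$, so that $1/g_y(n)<d_n/2$, for infinitely many $n\in a$, yielding $\frac{1}{f_x(n)}+\frac{1}{g_y(n)}<d_n$ infinitely often. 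Everything thus reduces to making the member of the pair designated \emph{dominant} exceed $3v_{x,f}$ infinitely often on $a$.

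The two small-cardinal cases make this uniform or recursive. Under $\mathfrak b=\mathfrak d$ fix a scale $\langle h_\alpha:\alpha<\mathfrak b\rangle$ and let $\rho(x,f)=\min\{\alpha:h_\alpha\ge^* v_{x,f}\}$; define $F(x,f)=\max(3f,3v_{x,f},3h_{\rho(x,f)})$ together with the inflation. For a switching pair, say $\rho(x,f)\le\rho(y,g)$; then $g_y\ge 3h_{\rho(y,g)}\ge^* 3h_{\rho(x,f)}\ge^* 3v_{x,f}$ \emph{everywhere mod finite}, so $3v_{x,f}$ is exceeded cofinitely on $a$ and the key inequality holds. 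No bound on $|X_n|$ is used, giving $\Delta(\mathcal M)$. Under $\mathfrak d=\mathfrak c$ with $|X_n|\le\mathfrak c$ the domain $\nabla_n X_n\times\baire$ has size $\le\mathfrak c=\mathfrak d$, so I enumerate it as $\langle(x_\xi,f_\xi):\xi<\mathfrak d\rangle$ and define $F$ by recursion, taking ``later $=$ dominant''. At stage $\xi$ the sets $a_\eta=M(x_\eta,f_\eta;x_\xi)$ and functions $3v_{x_\eta,f_\eta}$ ($\eta<\xi$) number fewer than $\mathfrak d$, so \cref{LemmaDiagonilizingFunctions} supplies $h$ with $h>3v_{x_\eta,f_\eta}$ infinitely often on each $a_\eta$; set $F(x_\xi,f_\xi)=\max(3f_\xi,3v_{x_\xi,f_\xi},h)$ plus inflation. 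This gives exactly the ``infinitely often on $a$'' needed above, whence $\Delta(\mathcal M(\mathfrak c))$.

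For $\mathsf{MH}$ with separable metrizable $X_n$, code the $(X_n,d_n)$ (countable weight) as parameters lying in every $H_\alpha$, and write $H(\om_1)=\bigcup_{\alpha<\kappa}H_\alpha$ as in \cref{MH}, with $\rho(x,f)=\min\{\alpha:(x,f)\in H_\alpha\}$. For each $\alpha$ pick a \emph{non-decreasing} $e_\alpha\in\baire$ not dominated by $H_\alpha\cap\baire$ (possible since that set is not dominating), and set $F(x,f)=\max(3f,3v_{x,f},e_{\rho(x,f)})$ plus inflation. For a switching pair with $\rho(x,f)\le\rho(y,g)=\beta$, both pairs lie in $H_\beta$, hence so do $a=M(x,f;y)$ and $v_{x,f}$. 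The delicate step is to upgrade ``$e_\beta$ is unbounded over $H_\beta\cap\baire$'' to ``$g_y$ exceeds $3v_{x,f}$ infinitely often on $a$'', and I will do this by reflection: letting $\pi_a$ be the increasing enumeration of $a$ (in $H_\beta$), the composite $k\mapsto 3v_{x,f}(\pi_a(k))$ lies in $H_\beta\cap\baire$, so $e_\beta$ exceeds it infinitely often; since $e_\beta$ is non-decreasing and $\pi_a(k)\ge k$, we have $g_y(n)\ge e_\beta(n)\ge e_\beta(\pi_a^{-1}(n))$ on $a$, and these exceedances pull back to infinitely many $n\in a$ with $g_y(n)>3v_{x,f}(n)$. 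Thus the key inequality holds and $\Delta(\mathcal{SM})$ follows.

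The main obstacle throughout is precisely this last manoeuvre --- producing, on a switching set $a$ that depends on \emph{both} members of the pair, a separating function built from only \emph{one} of them. The scale resolves it by sheer eventual domination, the recursion by \cref{LemmaDiagonilizingFunctions}, and $\mathsf{MH}$ only after the reflection/monotonicity trick converts ``unbounded'' into ``unbounded along $a$''; I expect the $\mathsf{MH}$ case to be the hardest to write carefully. A secondary, purely technical point to verify in each case is the treatment of isolated coordinates, where the distance inequality in the definition of $\Delta$ (rather than mere disjointness of the one-coordinate balls) is salvaged by the inflation described above.
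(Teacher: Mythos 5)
Your proposal is correct and follows essentially the same route as the paper's proof: rank by least index in a scale for $\mathfrak{b}=\mathfrak{d}$, transfinite recursion through an enumeration of $\nabla_n X_n \times \baire$ using \cref{LemmaDiagonilizingFunctions} for $\mathfrak{d}=\mathfrak{c}$, and rank by least elementary submodel $H_\alpha$ for $\mathsf{MH}$, in each case arranging that the higher-ranked member of a switching pair exceeds the relevant function infinitely often on the appropriate set $M(x,f;y)$. Your two refinements are worth keeping: the inflation device $v_{x,f}$ explicitly repairs the coordinates where $x(n)$ is isolated (there $n\in M(x,f;y)$ only gives $d_n(x(n),y(n))\ge \iota_n(x(n))$ rather than $\ge \rfrac{1}{f(n)}$, a case the paper's computations pass over), and your nondecreasing-witness/increasing-enumeration pullback supplies a proof of the transfer fact ``$h\rest a \ngtr^* f_\alpha$ for $h\in H_\alpha \cap \baire$, $a\in H_\alpha\cap[\om]^\om$'' which the paper uses in the $\mathsf{MH}$ case without justification.
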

\begin{proof} We deal with each case in turn. 

\smallskip

\noindent \textbf{For $\mathfrak{b} = \mathfrak{d}$:}  Let $\{ f_\alpha : \alpha < \mathfrak{d} \}$ be a scale  such that $f_\alpha$ $\leq^*$-dominates $\{ 2f_\beta : \beta < \alpha \}$. Let $\down{f_\alpha} = \{ f\in \om : f \leq^* f_\alpha \}$ and define $F: \nabla_n X_n\times \baire \to \baire$ as $F(x,f) = 2 f_\alpha$ if and only if $\alpha$ is the least such that $f \in \; \down{f_\alpha}$.

Pick switching elements $(x,f),(y,g) \in \nabla_n X_n\times \baire$. We may assume that $f\in \; \down{f_\alpha}$ and $g\in \; \down{f_\beta}$, for minimum $\beta, \alpha$ and $\beta \leq \alpha$. Then, $g \leq ^* f_\beta \leq^* 2f_\beta \leq^* f_\alpha$. Now, if $n\in M(y,g;x)$ then $x(n) \notin B_n(y(n), g(n))$, that is, $\rfrac{1}{g(n)} < d_n(y(n), x(n))$. Consequently, $\rfrac{1}{2g(n)} + \rfrac{1}{2 f_\alpha (n)} < d_n(y(n), x(n))$ which implies $\rfrac{1}{2 f_\beta(n)} + \rfrac{1}{2f_\alpha (n)} < d_n(y(n), x(n))$, as desired.
\medskip

\noindent \textbf{For $\mathfrak{d} = \mathfrak{c}$:} If $X_n$ is metrizable of size no more than $\mathfrak{c}$, for $n\in \om$, then we have $|\nabla_n X_n| = \mathfrak{d}$. 
Enumerate $\nabla_n X_n \times \baire = \{ (x_\alpha, f_\alpha) :  \alpha < \mathfrak{d} \}$. Fix $\alpha < \mathfrak{d}$ and suppose $F$ is constructed satisfying $\Delta((X_n)_n)$ on $\{ (x_\beta, f_\beta) : \beta < \alpha \}$ and $F(x_\beta, f_\beta) \geq^* 2f_\beta$, for every $\beta < \alpha$. 
The sets $\mathcal{F} = \{ 2f_\beta : \beta < \alpha \}$ and $\mathcal{A} = \{ M(x_\beta, f_\beta ; x_\alpha) : \beta < \alpha \}$ have size less than $\mathfrak{d}$. \cref{LemmaDiagonilizingFunctions} applies, there is $f'_\alpha \in \baire$ such that $2f_\beta \rest M(x_\beta, f_\beta; x_\alpha)\ngtr^* f'_\alpha$, for $\beta < \alpha$. 
Define $F(x_\alpha, f_\alpha)= 2 \max \{f_\alpha, f'_\alpha\}$. This construction completes $F$ on $\nabla_n X_n \times \baire$. To see that $F$ witness $\Delta((X_n)_n)$, pick switching elements $(x_\beta, f_\beta), (x_\alpha, f_\alpha)$, and suppose $\beta < \alpha$.  
Let  $M = \{ n\in  M(x_\beta, f_\beta; x_\alpha) : f'_\alpha (n) > 2f_\beta \}$, which is infinite. Hence, for $n\in M$, $x_\alpha(n) \notin B_n(x_\beta(n), \rfrac{1}{f_\beta(n)})$.
It is clear that for $n\in M$, $\rfrac{1}{F(x_\beta, f_\beta)(n)} + \rfrac{1}{F(x_\alpha, f_\alpha)(n)} < d_n (x_\beta (n) , x_\alpha (n))$.  

\medskip

\noindent \textbf{For $\mathsf{MH}$:} Every separable metrizable space embeds into the Hilbert cube $[0,1]^\omega$, which is isomorphic to a subset of $H(\omega_1)$. Hence if $(X_n)_n$ is a sequence of separable metrizable spaces, then we can suppose $\nabla_n X_n \subseteq H(\omega_1)$.

 Let $H_\alpha$ be as in $\mathsf{MH}$ (\cref{MH}) and $f_\alpha$ be a witness that $H_\alpha \cap \baire$ is not dominating. We may assume that $H_\alpha \subseteq H_{\alpha +1}$ and that $f_\alpha \in H_{\alpha +1}$. Define $F: \nabla_n X_n\times \baire \to \baire$ as $F(x,f) = 2f_\alpha$ if and only if $\alpha$ is the least such that $(x,f) \in H_\alpha$. Choose switching elements $(x,f), (y,g) \in \mathop{dom} F$. Then $(x,f)\in H_\beta, (y,g) \in H_\alpha$, for minimum $\beta \leq \alpha$. Since the $H_\alpha$'s are elementary submodels, $2g, 2f, M(x,f; y)$ and $M(y,g;f)$ are in $H_\alpha$. Also, for any $h\in H_\alpha \cap \baire$ and $a\in H_\alpha \cap [\om]^\om$,  $h \rest a\ngtr^* f_\alpha$. Hence, $2g\rest M(y,g;x) \ngtr^* f_\alpha$ which implies that there is an infinite set $M \subseteq M(y,g;x)$ such that for $n\in M$, $2g(n) \leq f_\alpha(n)$ and $x(n) \notin B_n (y(n), \rfrac{1}{g(n)})$. As a consequence, for every $n\in M$, $\rfrac{1}{f_\alpha(n)} + \rfrac{1}{2g(n)} < d_n (x(n) , y (n))$. We conclude that for $n\in M$, $\rfrac{1}{F(x, f)(n)} + \rfrac{1}{F(y, g)(n)} < d_n (x(n) , y (n))$.  
\end{proof}


\section{Open Problems} 

The most basic open question, related to this paper, is that of Roitman:
\begin{ques}
Is $\neg \Delta$ consistent? Or is $\Delta$ true in $\mathsf{ZFC}$?
\end{ques}
Suppose $\Delta=\Delta(\om+1)$ were true in $\mathsf{ZFC}$, so $\nablaomega$ is monotonically normal. Then it seems implausible to the authors that $\nabla (\om.\om+1)^\om$ would not also be monotonically normal, in other words $\Delta(\om.\om+1)$ could be consistently false. 
If that is correct then, in $\mathsf{ZFC}$, it should be possible to deduce $\Delta(\om.\om+1)$ from $\Delta(\om+1)$. 
\begin{prob}
Show, in $\mathsf{ZFC}$, that $\Delta(\om+1) \implies \Delta(\om.\om+1)$.
\end{prob}

We know $\nabla A(\om_1)^\om$ is monotonically normal under $\mathfrak{d}=\om_1$; and that $\nabla A(\om_1)^\om$  monotonically normal implies $\mathfrak{b}=\om_1$. This leaves a gap.
\begin{prob}
What is the consistency strength of `$\nabla A(\om_1)^\om$ is monotonically normal'. Does $\mathfrak{b}=\om_1$ suffice?
\end{prob}

We have seen that Roitman's $\Delta$ is equivalent to $\nablaomega$ being monotonically normal. Hence $\Delta$ is a sufficient condition for $\nablaomega$ to be hereditarily paracompact. A natural question then is it necessary? 
But a more fundamental problem is to find necessary \emph{combinatorial} conditions for $\nablaomega$ to be paracompact. That would open a path to showing the independence of the box product problem.

\begin{prob}
Find combinatorial properties (in the style of $\Delta$) implied by `$\nablaomega$ is paracompact'.
\end{prob}


\bibliography{arxiv-mn_nabla}{}
\bibliographystyle{plain}

\end{document}